\date{\today}
\newtheorem{Theorem}{Theorem}[section]
\newtheorem{Definition}{Definition}[section]
\newtheorem{Remark}{Remark}[section]
\newtheorem{Lemma}{Lemma}[section]
\newtheorem{claim}{Claim}[Lemma]
\newcommand{\B}{\mathbb{B}}
\newcommand{\R}{\mathbb{R}}
\newcommand{\n}{\mathbb{N}}
\newcommand{\ee}{\mathbf e}
\newcommand{\dist}{\operatorname{dist}}
\newcommand{\spt}{\operatorname{spt}}
\author[E. S. Gama]{Eddygledson S. Gama}
\address[Gama]{
  Departamento de Matem\'atica,
  Universidade Federal do Cear\'a, Bloco 914, Campus do Pici,
  Fortaleza, Cear\'a, 60455-760, Brazil.
}
\email{eddygledson@gmail.com}
\author[F. Mart\'in]{Francisco Mart\'\i{}n}
\address[Mart\'in]{
  Departamento de Geometr\'\i{}a y Topolog\'\i{}a,
  Universidad de Granada,
  18071 Granada, Spain.
}
\email{fmartin@ugr.es}
\thanks{
E. S. Gama is supported by Capes/PDSE/88881.132464/2016-01. F. Mart\'in is partially supported by the
  MINECO/FEDER grant MTM2014-52368-P and  by the
Leverhulme Trust grant IN-2016-019. }
\title[Translating solitons asymptotic to hyperplanes in $\R^{n+1}$]{Translating solitons of the mean curvature flow asymptotic to hyperplanes in $\R^{n+1}$}
\begin{document}
\maketitle

\begin{abstract} A translating soliton is a hypersurface $M$ in $\R^{n+1}$ such that 
the family $M_t= M- t \,\ee_{n+1}$
is a mean curvature flow, i.e., such that normal component of the velocity at each point is
equal to the mean curvature at that point $\mathbf{H}=\ee_{n+1}^{\perp}.$
In this paper we obtain a characterization of hyperplanes which are parallel to the velocity and the family of tilted grim reaper cylinders as the only translating solitons in $\mathbb{R}^{n+1}$ which are $C^1$-asymptotic to two half-hyperplanes outside a non-vertical cylinder. This result was proven for translators in $\mathbb{R}^3$  by the second author, Perez-Garcia, Savas-Halilaj and Smoczyk  under the additional hypotheses that the genus of the surface was locally bounded and the cylinder was perpendicular to the translating velocity. \end{abstract}

\section{Introduction}

Consider $F(\cdot,t): M^n \rightarrow \R^{n+1}$ a one-parameter family of smooth immersed hypersurfaces in $\R^{n+1}$. We say that $M_t:= F(M,t)$ is a mean curvature flow if it satisfies:
\begin{eqnarray*}
\partial_t F(p,t) &= &\mathbf{H}(p,t), \; \; p \in M, \; t >0, \\
F(p,0) &= & F_0(p),
\end{eqnarray*}
where $\mathbf{H}(p,t)$ means the mean curvature vector of the hypersurface $M_t$ at the point $F(p,t)$, and $F_0$ is a given initial immersion. It is well known (see \cite{WHINotes} for instance) that if the initial hypersurface is compact, then the flow develops singularities in finite time. In particular we have that
$|A(p,t)|$ is not bounded when we approach the maximal time $T$. Singularities are classified according to the rate
at which $\max_{p \in M_t}|A(p,t)|$ blows up. If there is a constant $C>1$ such that 
$$\max_{p\in M_t}|A(p,t)| \sqrt{2(T-t)} \leq C,$$
then we say that the flow develops a  \textit{Type I singularity} at instant $T$.
Otherwise, that is, if
$$\limsup_{t\to T} \max_{p\in M_t}|A(p,t)|\sqrt{(T-t)}=+\infty,$$
we say that is a \textit{Type II singularity}.

An standard example a Type II singularity is given by a loop pinching off to a cusp (see Figure \ref{cardiod}). S. Angenent \cite{An1} proved, in the case of convex planar curves, that singularities of this kind are asymptotic (after rescaling) to the grim reaper curve $y=-log (\cos x)$, $x \in (-\pi/2,\pi/2),$ which moves set-wise by translation. In this case, up to inner diffeomorphisms of the soliton, it can be seen as an eternal solution of the curve shortening flow which evolves by translations.
In this paper we are interested in this type of solitons, which we will call {\em translating solitons (or translators)} from now on.
In general, a translating soliton is an oriented hypersurface $M$ in $\R^{n+1}$ whose  mean curvature vector field $\textbf{H}$ satisfies
$\textbf{H}=v^{\perp}$
where $v\in \mathbb{R}^{n+1}$ is a fixed vector. The vector $v$ is called the velocity. In particular, we have that the scalar mean curvature satisfies:
\begin{equation}\label{TS}
H=-\langle v,\xi \rangle,
\end{equation}
where $\xi$ is the Gauss map of $M.$ As we mentioned before, up to an intrinsic diffeomorphism 
of $M$, $M_t:=M + t v$ is a mean curvature flow. From now on, we will assume (up to dilations and
rigid motions) that the velocity of the flow is $\ee_{n+1}.$
\begin{figure}[htbp]
\begin{center}
\includegraphics[width=.78\textwidth]{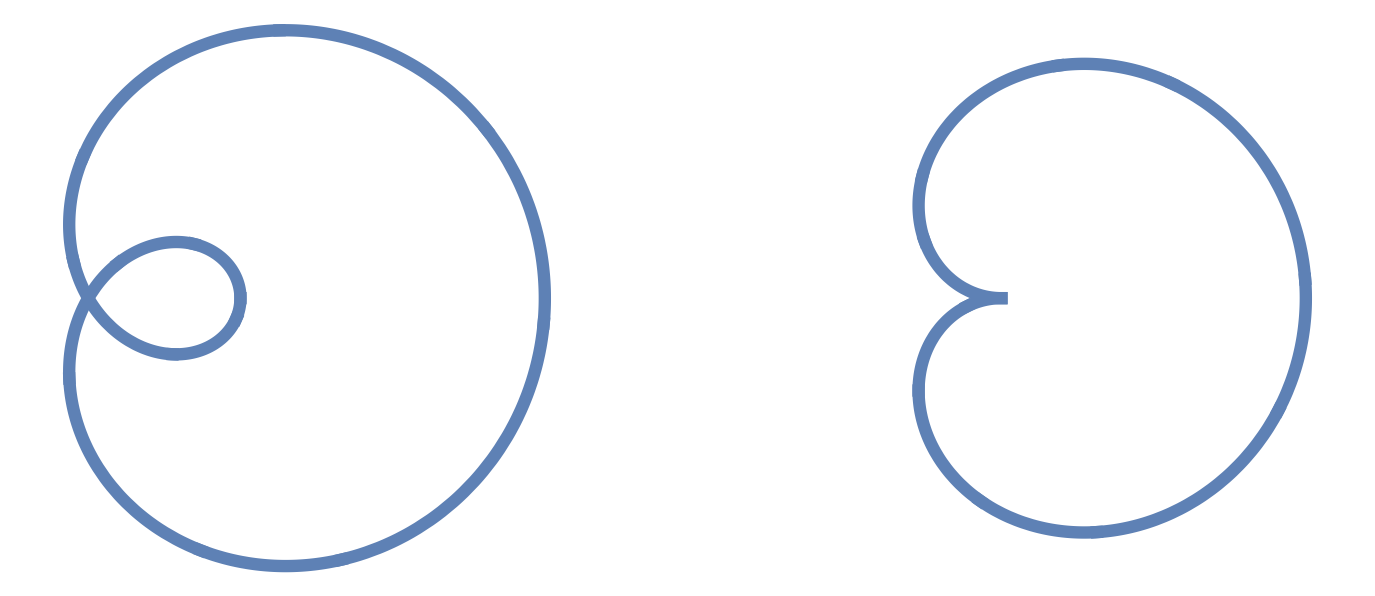}
\caption{}
\label{cardiod}
\end{center}
\end{figure}

The cylinder over a grim-reaper curve, i.e. the hypersurface in $\mathbb{R}^{n+1}$ parametrized by $F:\left(-\tfrac{\pi}{2},\tfrac{\pi}{2}\right)\times\mathbb{R}^{n-1}\longrightarrow \mathbb{R}^{n+1}$ given by $$F(x_{1},\ldots,x_{n})=(x_{1},\ldots,x_{n},-\log\cos x_{1}),$$ 
is a translating soliton, and appears as limit of sequences of parabolic rescaled solutions of mean curvature flows of immersed mean convex hypersurfaces.  For example, we can take product of the loop pinching off to a cusp times $\R^{n-1}$.  We can produce others examples of solitons just by scaling and rotating the grim reaper cylinder. In this way, we obtain a  $1-$parameter family of translating solitons parametrized by $F_\theta:\left(-\tfrac{\pi}{2\cos(\theta)},\tfrac{\pi}{2\cos(\theta)}\right)\times\mathbb{R}^{n-1}\longrightarrow \mathbb{R}^{n+1}$
\begin{equation}\label{tiltedgrim}
F_\theta(x_{1},\ldots,x_{n})=(x_{1},\ldots,x_{n},-\sec^2(\theta)\log\cos(x_{1}\cos(\theta))+\tan(\theta)x_{n}),
\end{equation}
where $\theta\in[0,\pi/2).$ Notice that the limit of the family $F_\theta$, as $\theta$ tends to $\pi/2$, is a hyperplane parallel to $\ee_{n+1}$.
\begin{figure}[htbp]
\begin{center}
\includegraphics[width=.65\textwidth]{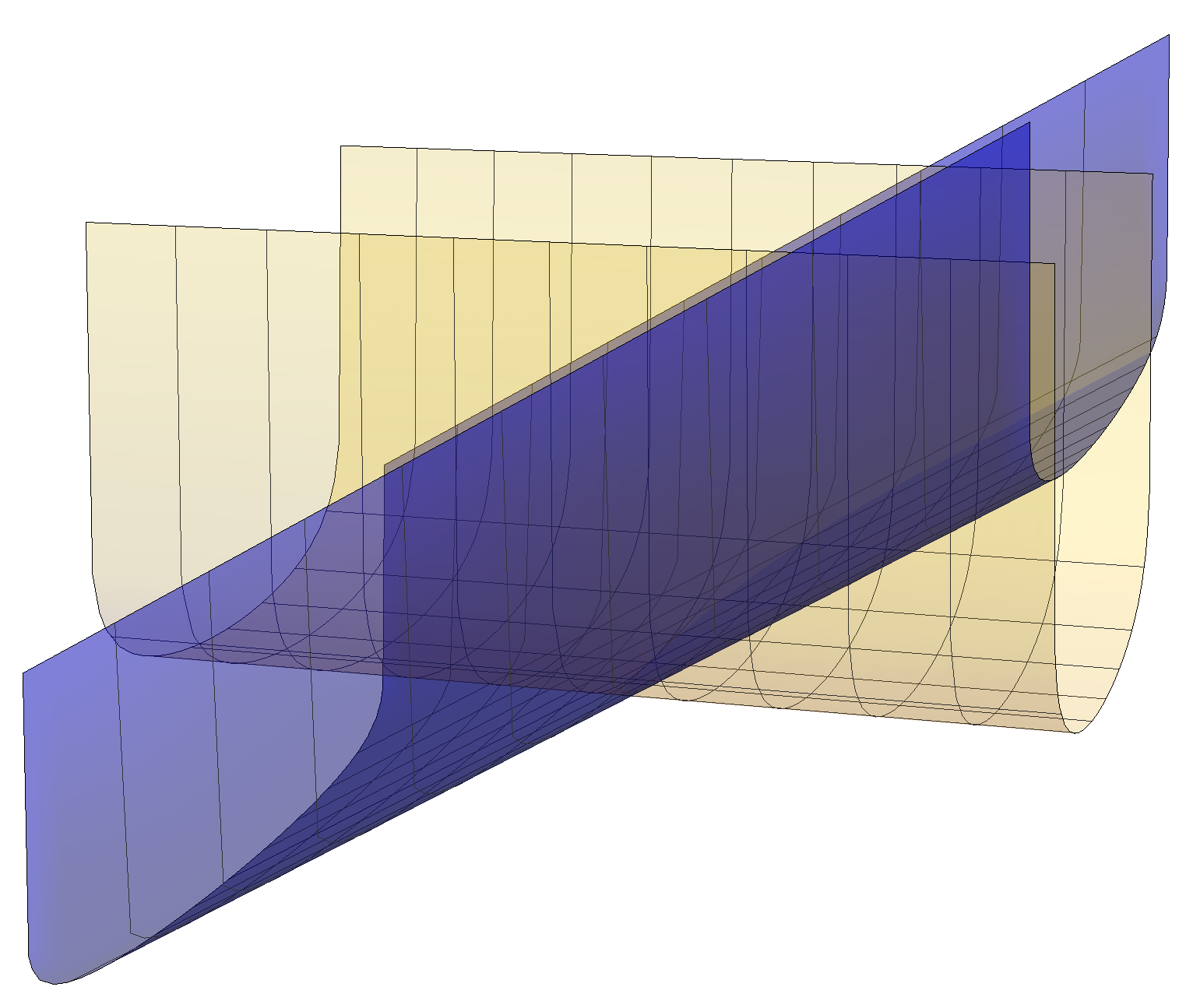}
\caption{The regular grim reaper cylinder in $\R^3$ and the tilted grim reaper for $\theta=\pi/4$.}
\label{default}
\end{center}
\end{figure}

Clutterbuck, Schn\"urer and Schulze \cite{CSS} (see also \cite{AW}) proved that there exists an entire graphical translator in $\R^{n+1}$ which is rotationally symmetric, strictly convex  with translating velocity $\ee_{n+1}$. This example is known as the translating paraboloid or bowl soliton. Moreover, they classified all the translating solitons of revolution, giving a one-parameter family $\{W^n_{\lambda}\}_{\lambda>0}$ of rotationally invariant cylinders called translating catenoids. The parameter $\lambda$ control the size of the neck of each translating soliton. The limit, as $\lambda \to 0$, of $W^n_{\lambda}$ consists of two copies of the bowl soliton with a singular point at the axis of symmetry. Furthermore, all these hypersurfaces have the following asymptotic expansion as r approaches infinity:
\begin{equation*}
\frac{r^2}{2(n-1)}-\log r+O(r^{-1}),
\end{equation*}
where $r$ is the distance function $\mathbb{R}^{n}$. 
\begin{figure}[htbp]
\begin{center}
\includegraphics[width=.55\textwidth]{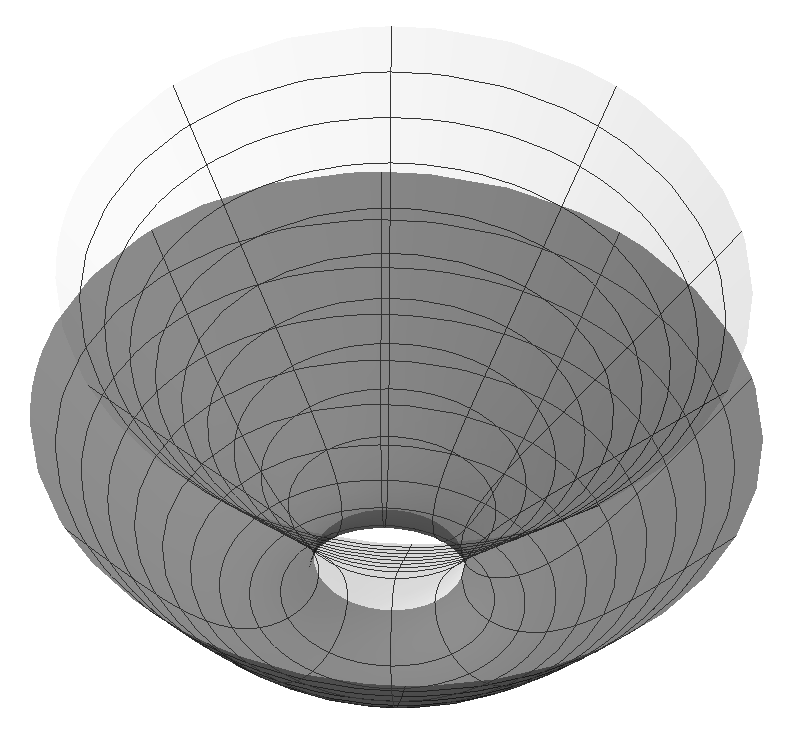}
\caption{The translating catenoid for $\lambda=2.$}
\label{catenoid}
\end{center}
\end{figure}

Recent years have witnessed the appearance of numerous examples of translators; see \cite{DDPN},\cite{Nguyen09},\cite{Nguyen13}, \cite{Nguyen15} and \cite{Smith} for more references about construction of translators. It is interesting to notice that all the known examples of complete, properly embedded, translating solitons are asymptotic to either bowl solitons or hyperplanes which are parallel to $\ee_{n+1}$.

Once  this abundance of translating solitons is guaranteed, then  arises the need to classify them. One of the first classification results was given by X.-J. Wang in \cite{Wang}. He  characterized the bowl soliton as the only convex translating soliton which is an entire graph. Very recently, J. Spruck and L. Xiao \cite{spruck-xiao} have proved that a translating soliton which is graph over the whole $\R^2$ must be convex. 

R. Haslhofer \cite{HASLHOFER} showed that any strictly convex, uniformly two-convex translator which is non-collapsing is necessarily rotationally symmetric. In this line of work, T. Bourni and M. Langford \cite{Bourni-Langford} proved that a translator which arises as a proper blow-up limit of a two-convex mean curvature flow of immersed hypersurfaces is rotationally symmetric. 
 
Using the classic Alexandrov's method of moving hyperplanes, F. Mart\'in, A. Savas-Halilaj, and K. Smoczyk \cite{MSHS15} showed that the bowl soliton is the only translating soliton that has one end and is $C^{\infty}$-asymptotic to a bowl soliton. Besides that, these authors obtained one of the first characterizations of the family of tilted grim reaper cylinders, as the only connected translation solitons in $\R^{n+1}$, $n\geq 2$, such that the function $|A|^2H^{-2}$ has a local maximum in $M\setminus H^{-1}(0).$ Some interesting classification results of grim reaper cylinders can be also found in \cite{Tasayco-zhou}.

Another characterization of the grim reaper cylinder in Euclidean $3$-space, in terms of its asymptotic behaviour, was given by F. Mart\'in, J. P\'{e}rez-Garc\'{i}a, A. Savas-Halilaj, and K. Smoczyk \cite{MPGSHS15,GARCIA16}. They proved that the grim reaper cylinder is the only connected, properly embedded, translating soliton of dimension $2$, with locally bounded genus and being $C^1$-asymptotic to two different half-planes. Their proof uses the well known fact that this kind of solitons can be seen as minimal surfaces in $\R^3$ with metric $e^{x_3} \langle \cdot, \cdot \rangle$, combined with  a compactness theorem for minimal surfaces in Euclidean $3$-manifolds due to B. White \cite{WHI15}. This compactness theorem is used for determining the asymptotic shape of the surface. Finally, the authors applied the classic versions of the maximum principle to prove that if a translating soliton is smoothly asymptotic to a grim reaper cylinder, then it must coincide with the grim reaper cylinder. 

It is not known whether White's compactness theorem has an extension for higher dimensions and, even in dimension $3$, it does not work without the hypothesis of locally bounded genus. So, the proof in \cite{MPGSHS15} fails for higher dimensions and without the hypothesis of locally bounded genus. Moreover, the tilted grim reaper cylinder given by \eqref{tiltedgrim} is $C^1-$asymptotic to two half-hyperplanes outside a non-horizontal cylinder. Hence, it is natural to ask if it is possible to generalize the theorem for arbitrary dimensions $n \geq 2$, without any further assumptions about the topology of the soliton or the axis of the cylinder. Surprisingly, the maximum principle for varifolds and the compactness theorem for stationary integral varifolds, allow us to give a positive answer to these questions.
\begin{Theorem}\label{Intro}
The hyperplanes which are parallel to $\ee_{n+1}$ and the family of tilted grim reaper cylinders are the only complete, connected, properly embedded, translating solitons in $\R^{n+1}$, $n \geq 2$, which are $C^1$-asymptotic to two half-hyperplanes outside a non-vertical solid cylinder. Moreover, if $n<7$ hyperplanes  parallel to $\ee_{n+1}$ are the only examples which are $C^1$-asymptotic to two half-hyperplanes outside a vertical cylinder.
\end{Theorem}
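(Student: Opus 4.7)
The plan is to view $M$ as a minimal hypersurface in Ilmanen's conformally-flat metric $\tilde g = e^{2 x_{n+1}/n}\langle\cdot,\cdot\rangle$ on $\R^{n+1}$, identify the candidate soliton forced by the asymptotic data, and then show $M$ coincides with it through a sliding/comparison argument whose key new ingredients are Allard's compactness for stationary integral varifolds and the Solomon--White maximum principle. First I would pin down the shape of the asymptotes. The equation $H = -\langle \ee_{n+1},\xi\rangle$ together with the $C^1$-asymptotic hypothesis forces the flat half-hyperplanes $\Pi_1^+,\Pi_2^+$ to have unit normals perpendicular to $\ee_{n+1}$, so each $\Pi_i^+$ is parallel to $\ee_{n+1}$; since their boundary $(n-1)$-subspaces must be parallel to the cylinder axis $w$, each $\Pi_i^+$ contains $\mathrm{span}(\ee_{n+1},w)$. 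After an ambient rotation fixing $\ee_{n+1}$, write $w = \cos\theta\,\ee_n + \sin\theta\,\ee_{n+1}$ with $\theta \in [0,\pi/2)$ in the non-vertical case; the $\Pi_i^+$ become pieces of $\{x_1 = c_\pm\}$. The natural candidate $\widetilde G$ is the horizontally centered tilted grim reaper $G_\theta$ when $c_+>c_-$, or the single vertical hyperplane $\{x_1=c_+\}$ when $c_+=c_-$.

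Next I would compare $M$ against the one-parameter family $\{\widetilde G + s\,\ee_{n+1}\}_{s\in\R}$, each a translator sharing the vertical asymptotes of $M$. Using the $C^1$-asymptotic hypothesis, for $s \gg 0$ the curved part of $\widetilde G + s\,\ee_{n+1}$ sits well above that of $M$. Decrease $s$ to the critical value $s_0$ at which first contact occurs: an interior first-contact point immediately yields $M = \widetilde G + s_0\,\ee_{n+1}$ by the strong maximum principle for minimal hypersurfaces in $\tilde g$ and the connectedness of $M$.

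The \textbf{main obstacle} is that, because $M$ and $\widetilde G + s_0\,\ee_{n+1}$ share the same asymptotic half-hyperplanes, the first contact may occur only at infinity. To handle this case, suppose the infimum of the vertical separation is realized along a sequence $p_k \in M$ with $|p_k| \to \infty$. The translates $M - p_k$ and $\widetilde G + s_0\,\ee_{n+1} - p_k$ are again translators, hence minimal in $\tilde g$; I would apply Allard's compactness for stationary integral varifolds together with the uniform asymptotic control supplied by the hypothesis to extract a nontrivial limit stationary integral varifold $V_\infty$ containing two minimal sheets in tangential contact. The maximum principle for stationary integral varifolds of Solomon--White (and its varifold extensions due to Ilmanen and Wickramasekera) then forces the two sheets to agree, contradicting strict separation along the sequence. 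This varifold machinery is precisely what replaces the White compactness theorem used in \cite{MPGSHS15} and dispenses with both the locally bounded genus and the $n=2$ restrictions.

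Finally, for the $n<7$ vertical-cylinder clause the axis is $w = \ee_{n+1}$, no tilted grim reaper candidate is available, and the only candidate is a single vertical hyperplane $\{x_1 = c_+\}$, which requires $c_+ = c_-$. A blow-down analysis via the same varifold compactness yields a tangent cone at infinity of $M$ which is a stationary integral hypercone; by Simons' classification of low-dimensional area-minimizing hypercones together with Wickramasekera's regularity theorem, this cone must be a union of smooth hyperplanes in dimensions $n \leq 6$, forcing $c_+ = c_-$. The sliding argument applied with the vertical hyperplane translates $\{x_1 = t\}$ then concludes $M = \{x_1 = c_+\}$. The restriction $n<7$ is indispensable since Simons-type singular hypercones in higher dimensions could in principle generate additional translators $C^1$-asymptotic to parallel vertical hyperplanes.
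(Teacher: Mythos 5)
Your overall toolbox (Ilmanen's metric, Allard/Schoen--Simon compactness, the Solomon--White maximum principle, sliding barriers) is the right one and matches the paper's, but the endgame of your non-vertical argument has a genuine gap. You correctly identify the obstacle --- $M$ and the candidate grim reaper share the same asymptotic hyperplanes, so contact can escape to infinity --- but your proposed fix does not close it: if the minimizing sequence $p_k$ escapes \emph{up the wings} (i.e.\ $\langle p_k,u_\theta\rangle\to\infty$), then both $M-p_k$ and $\widetilde G+s_0\ee_{n+1}-p_k$ converge to the \emph{same} asymptotic hyperplane $\{x_1=c_\pm\}$, the two limit ``sheets'' coincide trivially, and Solomon--White yields no contradiction. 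This is exactly why the paper only ever slides in directions lying in $[\ee_1,u_\theta]^\perp$ (so that $\langle p_i,u_\theta\rangle$ stays bounded and the contact point survives in the interior), and why it never identifies $M$ with the grim reaper by a direct contact argument. Instead, after trapping $M$ (Lemmas 4.2--4.6) the paper proves $M$ is a global vertical graph, upgrades the asymptotics to $C^2$, shows the functions $h_j=\xi_j/H$ vanish at the ends, applies Hopf's maximum principle to the equation $\Delta h_j+\langle\nabla h_j,\nabla(x_{n+1}+2\log H)\rangle=0$ to get $\xi_j\equiv 0$, deduces $|A|^2=H^2$, and invokes the rigidity theorem of Mart\'in--Savas-Halilaj--Smoczyk. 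None of this machinery appears in your proposal, and without it (or some substitute) the uniqueness step is unproved. You also skip the necessary preliminary that the two asymptotic hyperplanes are at distance exactly $\pi/\cos\theta$; otherwise no tilted grim reaper with the prescribed asymptotes exists to slide in the first place.

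The vertical case is also not salvageable as written. A tangent cone at infinity does not distinguish $c_+=c_-$ from $c_+\neq c_-$: blowing down a translator asymptotic to two distinct parallel vertical hyperplanes produces the single hyperplane $\{x_1=0\}$ with multiplicity two, which is a perfectly good union of smooth hyperplanes, so Simons/Wickramasekera gives no contradiction; moreover Ilmanen's metric is not scale-invariant, so the blow-down is not a stationary object for the relevant functional. The paper's actual argument compares $M$ with a translate $M+s\nu+t_0w_1$ of \emph{itself}, uses the Dynamics Lemma to extract limits, and needs $n<7$ only so that Schoen--Simon regularity makes the limit varifold smooth at the contact point (the authors in fact conjecture the statement for all $n$, contrary to your claim that the restriction is forced by singular cones).
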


We would like to point out that the previous theorem is sharp in the following sense: if we remove the hypothesis about the cylinder, then the result is false. Given a vertical slab in $\R^{n+1}$ of width $w > \pi$, Hoffman, Ilmanen, White and the second author \cite{hoffman} have constructed a $(n-1)$-dimensional  family of complete graphical translators over 
this strip. When the translator that they construct is rotationally symmetric (see also \cite{bourni-2}), then these authors
also get uniqueness. From now on, we will refer these examples as $\Delta$-wings. When the $\Delta$-wing is rotationally symmetric, then it is also convex 
and it is also asymptotic to the tilted grim reaper cylinder by angle $\theta=\pm \arccos \tfrac \pi w$. These graphs are asymptotic to the vertical planes $\{x_1= \pm w/2\}$, but it is impossible (by the convexity) to find a cylinder in the hypothesis of our theorem. On the other hand, if we increase the number of asymptotic planes outside the cylinder, then  X. H. Nguyen \cite{Nguyen09, Nguyen13} also produced counterexamples.
\begin{figure}[htbp]
\begin{center}
\includegraphics[width=.55\textwidth]{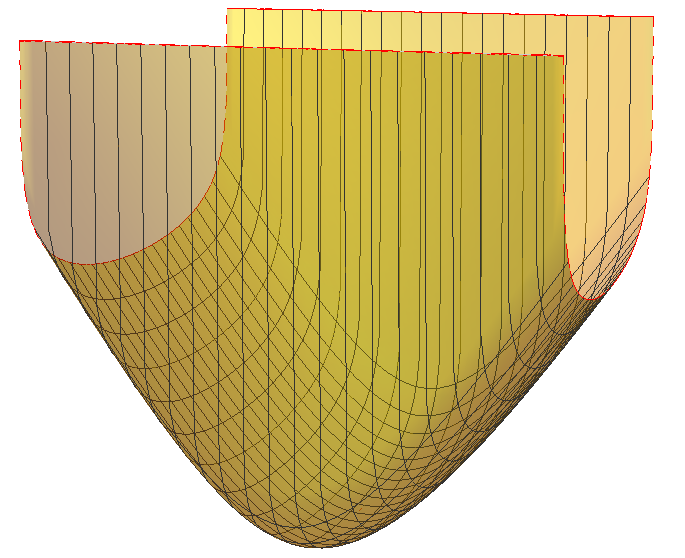}
\caption{Ilmanen's example of width $\sqrt{2} \pi.$}
\label{default-2}
\end{center}
\end{figure}

This paper is structured as follows. In Section \ref{back}, we give a short review of results from Geometric Measure Theory that we need in the paper. In Section \ref{sec:lema}, we obtain a lemma which  shows that every complete, properly embedded translating soliton in $\R^n$ satisfying the assumptions of Theorem \ref{Intro} has a surprising amount of internal dynamical periodicity. Finally, in the last section of this paper, we prove the main theorems. 

{\bf Acknowledgements.} We thank Brian White for valuable conversations and suggestions about this work. E. S. Gama is also very grateful to the Institute of Mathematics at the University of Granada for its hospitality during the time the research and preparation of this article were conducted.

\section{Background} \label{back}

In this section we will make a short review of the background we need about translating solitons and Geometric Measure Theory\footnote{A short review about varifolds can be found in the appendix of \cite{WHI}.}.
\subsection{Translating Solitons}
An oriented hypersurface $M$ in $\R^{n+1}$ satisfying 
\begin{equation*}
\textbf{H}=v^{\perp},
\end{equation*}
is called  a {\em translating soliton or translator} of the mean curvature flow. Recall that we are assuming that $v=\ee_{n+1}$, where $B=\{\ee_1,\ee_2, \ldots, \ee_{n+1}\}$ is the canonical basis of $\R^{n+1}$. The following theorem due to T. Ilmanen gives us the key to relate translating solitons with objects that are very well known and studied; minimal hypersurfaces.

\begin{Theorem}[Ilmanen \cite{ILMANEN}]\label{Ilmanen}
Translating solitons with respect to $\ee_{n+1}$ in $\mathbb{R}^{n+1}$ are minimal hypersurfaces with respect to the metric $g=e^{\frac{2}{n}x_{n+1}}\langle \cdot, \cdot \rangle,$ where $\langle \cdot, \cdot \rangle$ is the Euclidean metric.
\end{Theorem}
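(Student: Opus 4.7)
My plan is to view the metric $g = e^{(2/n)x_{n+1}}\langle\cdot,\cdot\rangle$ as a conformal deformation of the Euclidean metric and to identify the translator equation \eqref{TS} with the Euler--Lagrange equation of the corresponding weighted Euclidean area. Writing $\phi = x_{n+1}/n$ so that $g = e^{2\phi}\langle\cdot,\cdot\rangle$, the induced $n$-dimensional $g$-volume form on any hypersurface $M$ is $e^{n\phi}\,d\mu = e^{x_{n+1}}\,d\mu$, where $d\mu$ denotes the Euclidean volume form. Consequently $M$ is minimal in $(\R^{n+1},g)$ if and only if it is a critical point of the weighted Euclidean area
$$F(M) \;=\; \int_M e^{x_{n+1}}\,d\mu$$
under compactly supported normal deformations, so the theorem reduces to identifying the Euler--Lagrange equation of $F$.

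To carry this out, I will compute $\delta F(V)$ for a normal variation $V$ by differentiating the density $e^{x_{n+1}}$ and the Euclidean area element separately. Using the product rule
$$\operatorname{div}_M(e^{x_{n+1}}V) \;=\; e^{x_{n+1}}\operatorname{div}_M V + \langle (\nabla e^{x_{n+1}})^{T}, V\rangle$$
together with the divergence identity $\int_M \operatorname{div}_M W\,d\mu = -\int_M \langle \mathbf{H}, W\rangle\,d\mu$, valid for compactly supported $W$, the tangential contribution of $\nabla e^{x_{n+1}} = e^{x_{n+1}}\ee_{n+1}$ cancels and I obtain
$$\delta F(V) \;=\; \int_M e^{x_{n+1}}\bigl\langle \ee_{n+1}^{\perp} - \mathbf{H},\,V\bigr\rangle\,d\mu.$$
Since this expression must vanish for every compactly supported normal $V$, the fundamental lemma of the calculus of variations yields $\mathbf{H} = \ee_{n+1}^{\perp} = v^{\perp}$, which is precisely equation \eqref{TS}; the same calculation, read in reverse, gives the converse.

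I do not anticipate any real obstacle; the proof is essentially a routine application of the first variation formula. The one point that warrants a brief sanity check is the choice of exponent $2/n$ in the conformal factor: it is the unique value that makes $e^{n\phi}$ equal to the translator weight $e^{x_{n+1}}$, so that the factor $n\nabla\phi$ arising naturally in the first variation coincides with the prescribed velocity $\ee_{n+1}$. An equally short alternative, bypassing the variational computation entirely, would be to invoke the standard conformal transformation rule $\mathbf{H}_{\bar g} = e^{-2\phi}\bigl(\mathbf{H}_g - n(\nabla\phi)^{\perp}\bigr)$ for the mean curvature vector under $\bar g = e^{2\phi}g$, which collapses the theorem into a single substitution.
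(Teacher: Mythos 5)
Your proof is correct. The paper states this result as a citation to Ilmanen and does not reproduce an argument, so there is nothing to compare against; your computation — rewriting the $g$-area of a hypersurface as the weighted Euclidean area $\int_M e^{x_{n+1}}d\mu$ (the exponent $2/n$ being exactly what makes $e^{n\phi}=e^{x_{n+1}}$) and reading off $\mathbf{H}=\ee_{n+1}^{\perp}$ from the first variation, or equivalently invoking the conformal transformation law for the mean curvature vector — is the standard derivation and is sound.
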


Now we are going to define what we mean by a hypersurface being asymptotic to half-hyperplanes outside a cylinder.

\begin{Definition}
Let $\mathcal{H}$ a open half-hyperplane in $\mathbb{R}^{n+1}$ and $w$ the unit inward pointing normal of $\partial \mathcal{H}$. For a fixed positive number $\delta$, denote by $\mathcal{H}(\delta)$ the set given by
\begin{equation*}
\mathcal{H}(\delta):=\left\{p+tw:p\in \partial \mathcal{H}\ \operatorname{and}\ t>\delta\right\}.
\end{equation*}
We say that a smooth hypersurface $M$ is $C^{k}-$asymptotic to the open half-hyperplane $\mathcal{H}$ if $M$ can be represented as the graph of a $C^{k}-$ function $\varphi:\mathcal{H}\longrightarrow \mathbb{R}$ such that for every $\epsilon>0$, there exists $\delta>0$, so that for any $j\in\{1,2,\ldots,k\}$ it holds
\begin{equation*}
\sup_{\mathcal{H}(\delta)}|\varphi|<\epsilon\ \operatorname{and}\ \sup_{\mathcal{H}(\delta)}|D^j\varphi|<\epsilon.
\end{equation*}
We will say that a smooth hypersurface $M$ is $C^{k}-$asymptotic outside a cylinder to two half-hyperplanes $\mathcal{H}_{1}$ and $\mathcal{H}_{2}$ if there exists a solid cylinder $\mathcal{C}$ such that:
\begin{itemize}
\item[i.]The solid cylinder $\mathcal{C}$ contains the boundaries of the half-hyperplane $\mathcal{H}_{1}$ and $\mathcal{H}_{2}$,
\item[ii.]$M\setminus \mathcal{C}$ consists of two connected components $M_{1}$ and $M_{2}$ that are  $C^{k}-$asympto-tic to $\mathcal{H}_{1}$ and $\mathcal{H}_{2}$, respectively.
\end{itemize}
\end{Definition}

\begin{Remark}
Notice that the solid cylinders in $\R^{n+1}$ that we are considering are those whose boundary is isometric to $\mathbb{S}^{1}(r)\times\R^{n-1},$ where $\mathbb{S}^{1}(r)$ is the sphere of radius $r$.
\end{Remark}
Simple examples of this case are the hyperplanes parallel to $\ee_{n+1}$. Others examples are given by the family of tilted grim reaper cylinders, which are $C^{\infty}-$asymptotic to two half-hyperplanes outside the corresponding tilted cylinder. 
 
\subsection{Maximum Principle}

In this short section, we are going to give a short review of the maximum principle for varifolds. Before stating it, let us recall the classical version of the maximum principle.

\begin{Theorem}[Interior Maximum Principle]\label{Interior Maximum Principle}
Let $M_{1}$ and $M_{2}$ be $n-$dimensional embedded translating solitons, with not necessarily empty boundaries, in the Euclidean space $\mathbb{R}^{n+1}.$ Suppose that there exists a common point $x$ in the interior of $M_{1}$ and $M_{2}$ where the corresponding tangent spaces coincide and that $M_{1}$ lies at one side of $M_{2}$. Then $M_{1}=M_{2}$.\
\end{Theorem}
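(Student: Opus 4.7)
The plan is to reduce the statement to the classical strong maximum principle for minimal hypersurfaces in a Riemannian manifold, via Ilmanen's observation (Theorem \ref{Ilmanen}).

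First, by Theorem \ref{Ilmanen}, both $M_1$ and $M_2$ are minimal hypersurfaces of the Riemannian manifold $(\R^{n+1},g)$ with $g=e^{\frac{2}{n}x_{n+1}}\langle\cdot,\cdot\rangle$. Let $\Pi:=T_xM_1=T_xM_2$ and fix a unit normal $\nu$ to $\Pi$ such that, in a sufficiently small $g$-geodesic tubular neighborhood $U$ of $x$, both pieces of $M_i$ containing $x$ can be written as graphs over an open set $\Omega\subset\Pi$, namely as $\{\exp^g_y(u_i(y)\nu)\,:\,y\in\Omega\}$, with $u_i\in C^\infty(\Omega)$, $u_1(x)=u_2(x)=0$ and $Du_1(x)=Du_2(x)=0$. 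The hypothesis that $M_1$ lies on one side of $M_2$ near $x$ translates (possibly after relabeling) to $u_1\ge u_2$ on $\Omega$.

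Next, the minimal hypersurface equation for a graph in a Riemannian manifold is a quasilinear elliptic PDE of the form $\mathcal{M}[u]=0$, where
\[
\mathcal{M}[u]=\sum_{i,j=1}^{n}a^{ij}(y,u,Du)\,\partial_{ij}u+b(y,u,Du).
\]
Setting $w:=u_1-u_2\ge 0$ and subtracting the two equations, a standard interpolation argument (writing $\mathcal{M}[u_1]-\mathcal{M}[u_2]$ as an integral of its derivative along the segment joining $u_2$ to $u_1$) gives that $w$ satisfies a homogeneous linear elliptic equation
\[
\sum_{i,j=1}^{n}\tilde a^{ij}(y)\,\partial_{ij}w+\sum_{i=1}^{n}\tilde b^{i}(y)\,\partial_{i}w+\tilde c(y)\,w=0,
\]
with $\tilde a^{ij}$ uniformly elliptic and smooth coefficients. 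Since $w\ge 0$ in $\Omega$ and $w(x)=0$, Hopf's strong maximum principle forces $w\equiv 0$ on the connected component of $\Omega$ containing $x$. Hence $M_1$ and $M_2$ coincide in a neighborhood of $x$.

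Finally, real analyticity of solutions to the minimal surface equation (or equivalently of translating solitons, which satisfy an analytic quasilinear elliptic equation) propagates this local coincidence to all of $M_1$: the set where $M_1$ and $M_2$ agree and have the same tangent space is nonempty, closed by continuity, and open by the preceding local argument. Assuming $M_1$ (and $M_2$) are connected, as the statement implicitly requires, we conclude $M_1=M_2$. The main delicate point is the passage from the geometric ``one-sided'' hypothesis to the analytic inequality $u_1\ge u_2$ on a common domain, which requires choosing $U$ small enough that the relevant sheets of $M_1$ and $M_2$ are both graphical over $\Pi$; once this is set up, the rest is standard Hopf plus analytic continuation.
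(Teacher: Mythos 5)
The paper never proves this statement: it is quoted verbatim from the background section as the \emph{classical} interior maximum principle for translators, used as a known tool, so there is no in-paper argument to compare against. Your sketch is the standard proof of that classical fact and is essentially correct. Two small points. First, the Hopf step needs the usual care with the zeroth-order coefficient: from $\sum \tilde a^{ij}\partial_{ij}w+\sum \tilde b^i\partial_i w+\tilde c\,w=0$ with $\tilde c$ of arbitrary sign, you should rewrite it as $\sum \tilde a^{ij}\partial_{ij}w+\sum \tilde b^i\partial_i w-\tilde c^{-}w=-\tilde c^{+}w\le 0$ so that the strong minimum principle (nonpositive zeroth-order term) applies to the nonnegative function $w$ vanishing at the interior point; this is routine but worth saying. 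Second, the detour through Ilmanen's metric is unnecessary: the translator equation $H=-\langle \ee_{n+1},\xi\rangle$ is itself a quasilinear elliptic prescribed-mean-curvature equation for Euclidean graphs over the common tangent plane, so the same subtraction-plus-Hopf argument runs directly; likewise, real analyticity is not needed for the propagation step, since the set of points of $M_1$ near which $M_1$ and $M_2$ coincide is nonempty, open by definition, and closed by continuity of the tangent planes together with a re-application of the local tangency argument, which already yields $M_1=M_2$ on connected components. Your caveat about connectedness is apt: the statement, as is customary, implicitly assumes the hypersurfaces are connected.
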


Although this maximum principle has very useful applications, it requires a very strong hypothesis: smoothness. As we will work with varifolds, we will need another version of this theorem that can be applied in this setting. The version that we will use in this paper is due to B. Solomon and B. White in \cite{SW} ( see also \cite{WHI09} Theorem 4). Recall that a varifold $V$ in a Riemannian manifold $N$ minimizes area to first order if the first variation of $V$ , denoted by $\delta V$, satisfy
\begin{equation*}
\delta V(X)\geq 0,
\end{equation*}
for every compactly supported $C^{1}$ tangent vector field $X$ on $N$ satisfying
\begin{equation*}
\langle X,\nu_{\partial N} \rangle\geq 0
\end{equation*}
at all point of $\partial N,$ where $\nu_{\partial N}$ is the unit inward pointing normal of $\partial N$. After this short review, we can state the one of the main tools of this paper.

\begin{Theorem}[Strong Maximum Principle for Minimal Hypersurfaces \cite{SW}]\label{Strong Maximum Principle}
Suppose that $N^{m+1}$ is a Riemannian manifold, not necessarily complete, with connected non-empty boundary $\partial N$, and that $N^{m+1}$ is mean convex, i.e. that
\begin{equation*}
    \langle \mathbf{H},\nu_{\partial N} \rangle\geq0
\end{equation*}
on $\partial N$ where $\mathbf{H}$ is the mean curvature vector of $\partial N$ and where $\nu$ is the unit inward pointing normal of $\partial N$. Let $V$ be a $m$-dimensional varifold that minimizes area to first order in $N^{m+1}$. If $\spt V$ contains point of $\partial N$, then it must contain all $\partial N$.
\end{Theorem}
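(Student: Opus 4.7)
The natural approach is to combine the first-variation inequality with Fermi coordinates around $\partial N$ to reduce the statement to a (weak) strong maximum principle for the support of $V$ along $\partial N$. I would first observe that $S := \spt V \cap \partial N$ is closed in $\partial N$ (as $\spt V$ is closed in $N$), so by the connectedness hypothesis it suffices to prove that $S$ is open in $\partial N$: then $S = \partial N$, which is the desired conclusion. This reduces the global claim to a local propagation statement near any fixed $p \in S$.

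Working in a neighborhood $U$ of $p$, I would introduce Fermi coordinates $(y, s) \in B \times [0, \varepsilon)$ such that $s = \dist(\cdot, \partial N)$ and $\partial_s = \nu_{\partial N}$ along $\partial N$. Mean convexity of $\partial N$ translates, via the standard computation $\operatorname{div}_P \partial_s|_{s = 0} = -\operatorname{tr}_P(\nabla \partial_s)$, into an inequality that is favorable whenever the $m$-plane $P$ is close to $T\partial N$. Then for any nonnegative test functions $\phi \in C_c^\infty(B)$ and $\chi \in C_c^\infty([0, \varepsilon))$ with $\chi(0) > 0$, the compactly supported vector field $X := \phi(y)\chi(s)\,\partial_s$ satisfies $\langle X, \nu_{\partial N}\rangle \geq 0$ on $\partial N$, so by hypothesis
\begin{equation*}
\delta V(X) \;=\; \int_{G_m(N)} \operatorname{div}_P X \, dV(x, P) \;\geq\; 0.
\end{equation*}
Expanding this using that $V$ is stationary in the interior, and letting $\chi$ concentrate at $s = 0$, one obtains a comparison between the trace of $\|V\|$ on $\partial N$ (tested against $\phi$) and the mean curvature of $\partial N$ (tested against $\phi$).

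The next step is to transform this into a weak elliptic differential inequality for a single function describing how $\spt V$ meets $\partial N$. The cleanest route is a barrier-foliation argument: the nearby parallel level sets $\Sigma_s := \{d = s\}$ remain smooth and, by continuity, strictly mean-convex (into the open interior) for all small $s > 0$. Applying the interior strong maximum principle for stationary integral varifolds to $V$ against this barrier family $\{\Sigma_s\}$, together with the first-variation inequality above used as a boundary condition at $s = 0$, one concludes that if $\spt V$ touches $\partial N = \Sigma_0$ at $p$ while lying on the side $\{s \geq 0\}$, then $\spt V$ must contain an entire neighborhood of $p$ in $\partial N$. This establishes the required openness of $S$.

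The main obstacle is the low regularity of $V$: one cannot directly evaluate mean curvature on $\spt V$, and the barrier family $\{\Sigma_s\}$ is only approximately mean-convex as $s \to 0^+$, so the comparison requires a Hopf-boundary-point-style estimate that balances the rate at which $\spt V$ can leave $\partial N$ against the decay of the barrier's mean-convexity. Making this rigorous—equivalently, making precise sense of the weak differential inequality for the boundary trace of $\|V\|$—is the technical heart of the argument, and it is exactly where the hypothesis that $V$ minimizes area to first order against inward-pointing vector fields on $\partial N$ (rather than being merely interior-stationary) is used in full force.
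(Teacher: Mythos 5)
This statement is a quoted background result: the paper gives no proof of it, deferring entirely to Solomon--White \cite{SW} and to White's later treatment (\cite{WHI09}, Theorem 4). So the comparison here is between your reconstruction and the original source, not an argument in the paper itself.

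Your opening move is right and matches the standard proof: $S:=\spt V\cap\partial N$ is closed, so by connectedness of $\partial N$ it suffices to show $S$ is open, and the first-variation inequality is to be tested against compactly supported fields with $\langle X,\nu_{\partial N}\rangle\ge 0$. But the argument as written has two genuine gaps. First, in the barrier step you invoke ``the interior strong maximum principle for stationary integral varifolds'' against the foliation $\{\Sigma_s\}$; that interior statement (a stationary varifold lying on one side of a smooth mean-convex hypersurface and touching it must contain it) \emph{is} the Solomon--White theorem, merely rephrased with $N$ taken to be the closed side of the barrier. You cannot use it as a lemma here without circularity. Second, you explicitly leave the decisive step --- the Hopf-boundary-point-type estimate that converts the first-variation inequality into openness of $S$ --- as an unresolved ``obstacle,'' and the intermediate claim that concentrating $\chi$ at $s=0$ yields a ``trace of $\|V\|$ on $\partial N$'' is not meaningful without further work: a stationary varifold need not have a well-defined boundary trace, and the actual proof never takes one. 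What Solomon and White do instead is construct, near a point $p$ on the relative boundary of $S$ in $\partial N$, a single compactly supported vector field $X=\varphi\,\nabla u$ (with $u$ built from the distance to $\partial N$ and an auxiliary function on $\partial N$, sharpened by a factor $e^{-\lambda u}$ with $\lambda$ large) such that $\langle X,\nu_{\partial N}\rangle\ge 0$ on $\partial N$ while $\operatorname{div}_P X<0$ for every $m$-plane $P$ at every relevant point except planes tangent to $\partial N$ where mean convexity supplies the sign; the inequality $\delta V(X)\ge 0$ then forces $V$ to charge a full neighborhood of $p$ in $\partial N$. That construction is the content of the theorem, and it is exactly the piece missing from your proposal.
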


\subsection{Compactness Theorems for Varifolds} \label{compactness}

The purpose of this section consists of reviewing the compactness theorems for varifolds that we will need along the paper. These theorems combined with the strong maximum principle for minimal hypersurfaces will be our main tools. Let us start with the definitions of {\em weak convergence} and {\em smooth convergence}. Further details can be found in \cite{ALLARD,SIMON,Schoen-Simon}.

\begin{Definition}\label{Weakly Convergence}
Let $\left\{V_{i}\right\}_{i\in\mathbb{N}}$ be a sequence of varifolds in a smooth manifold $N$. We say that $\{V_{i}\}$ converges weakly to the varifold $V$, if for every smooth function $\varphi:N\longrightarrow\mathbb{R}$ with compact support we have
\begin{equation*}
\lim_{i\longrightarrow \infty}\int_{N}\varphi d\mu_{V_{i}}=\int_{N}\varphi d\mu_{V},
\end{equation*}
where $d\mu_{V_{i}}$ is the Radon measure in $N$ associated to the varifold $V_{i}.$ If the sequence $\left\{V_{i}\right\}$  converges weakly to $V$, then we will write $V_{i}\rightharpoonup V$.
\end{Definition}
Now let $M$ be a hypersurface in a Riemannian manifold $N$. Given $p\in M$ and $r>0$ we denote by 
\begin{equation*}
B_{r}(p)=\left\{v\in T_{p}M\; : \;|v|<r\right\}
\end{equation*}
the tangent ball around $p$ of radius $r$. Consider now $T_{p}M$ as a vector subspace of $T_{p}N$ and let $\nu$ be a unit normal vector to $T_{p}M$ in $T_{p}N.$ Fix a sufficiently small $\epsilon>0$ and denote by $W_{r,\epsilon}(p)$ the solid cylinder around $p$, that is
\begin{equation*}
W_{r,\epsilon}(p):=\left\{\exp_p(q+t\nu)\; : \;q\in B_{r}(p)\ \operatorname{and}\ |t|<\epsilon\right\},
\end{equation*}
where  $\exp$ is the exponential map of the Riemannian manifold $N.$ Given a smooth function $f:B_{r}(p)\longrightarrow\mathbb{R},$ the set
\begin{equation*}
Graph(f):=\left\{\exp_p(q+f(q)\nu)\; : \;q\in B_{r}(p)\right\},
\end{equation*}
is called the graph of $f$ over $ B_{r}(p).$ \

Now, we can define the convergence in $C^{\infty}-$topology.

\begin{Definition}\label{Smooth Convergence}
Let $\left\{M_{i}\right\}$ be a sequence of hypersurfaces in a smooth manifold $N$. We say that $\{M_{i}\}$ converges in $C^{\infty}-$ topology with finite multiplicity to a smooth embedded hypersurface $M$ if \
\begin{itemize}
\item[a.] $M$ consists of accumulations points of $\{M_{i}\}$, that is, for each $p\in M$ there exists a sequence $\{p_{i}\}$ such that $p_{i}\in M_{i}$, for each $i\in \mathbb{N}$, and $p=\textstyle\lim_{i}p_{i}$;

\item[b.] For every $p\in M$ there exists $r,\epsilon>0$ such that $M\cap W_{r,\epsilon}(p)$ can be represented as the graph of a function $f$ over $B_{r}(p)$;

\item[c.] For $i$ large enough, the set $M_{i}\cap W_{r,\epsilon}(p)$ consists of a finite number, $k$, independent of $i$, of graphs of functions $f_{i}^1,\ldots,f^{k}_{i}$ over $B_{r}(p)$ which convergence smoothly to $f.$\
\end{itemize}
The  multiplicity of a given point $p\in M$ is defined by $k$. As $\left\{M_{i}\right\}$ converges smoothly to $M$, then we will write $M_{i}\longrightarrow M$.
\end{Definition}

\begin{Remark}
In general, the limit submanifold is not necessarily connected.
\end{Remark}
At this point, we can state the weak version of the compactness theorem ( see \cite[section 6.4]{ALLARD} or \cite[Theorem 42.7]{SIMON}).

\begin{Theorem}[Compactness Theorem for Integral Varifold \cite{ALLARD}]\label{Compactness Theorem for Integral Varifold}
Let $\left\{M_{i}\right\}$ be a sequence of minimal hypersurfaces in $\mathbb{R}^{n+1}$, with not necessary the canonical metric, whose area is locally bounded, then a subsequence of $\left\{M_{i}\right\}$ converge weakly to a stationary integral varifold $M_{\infty}.$
\end{Theorem}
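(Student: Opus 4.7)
The plan is to follow Allard's classical integral compactness argument in three stages: (i) extract a weakly convergent subsequence using Banach--Alaoglu, (ii) pass stationarity to the limit by testing against compactly supported vector fields, and (iii) invoke Allard's rectifiability and integer compactness theorems to show that the weak limit is again integral.

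For stage (i), I would associate to each minimal hypersurface $M_i$ its canonical multiplicity-one integral $n$-varifold $V_i$ on the ambient Riemannian space (which, by Theorem \ref{Ilmanen}, may carry a smooth but non-Euclidean metric on $\R^{n+1}$). Since an $n$-varifold is a Radon measure on the Grassmann bundle $G_n(\R^{n+1})$, the local area bound $\sup_i \|V_i\|(K)<\infty$ on every compact $K$ lets me apply Banach--Alaoglu together with a diagonal argument over a countable exhaustion of $\R^{n+1}$ by compact sets. This produces a subsequence (still denoted $\{V_i\}$) and an $n$-varifold $V_\infty$ such that $V_i \rightharpoonup V_\infty$ in the sense of Definition \ref{Weakly Convergence}. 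Lower semicontinuity of mass ensures that $V_\infty$ also has locally finite mass.

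For stage (ii), minimality of each $M_i$ in the ambient metric is equivalent to $\delta V_i(X)=0$ for every compactly supported $C^1$ vector field $X$. Since the first variation functional $V \mapsto \int \mathrm{div}_S X \, dV(x,S)$ pairs $V$ with a fixed continuous compactly supported test function on $G_n(\R^{n+1})$, the very definition of weak convergence gives
\[
\delta V_\infty(X) \;=\; \lim_{i\to\infty} \delta V_i(X) \;=\; 0,
\]
so $V_\infty$ is stationary. This step is essentially formal and causes no difficulty.

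The real obstacle is stage (iii): upgrading $V_\infty$ from a general stationary varifold to an integral one. Because each $V_i$ is a multiplicity-one smooth submanifold, the monotonicity formula for stationary varifolds in a Riemannian manifold yields a uniform positive lower bound on the density $\theta^n(\|V_i\|,\cdot)$ at points of $\spt V_i$; by standard semicontinuity arguments this lower density bound transfers to $\spt V_\infty$, so Allard's rectifiability theorem (which needs positive upper density on the support and locally bounded first variation, both now in hand) shows that $V_\infty$ is $n$-rectifiable. Integrality of the limiting multiplicity is then the deeper conclusion of Allard's integer compactness theorem: at $\mathcal{H}^n$-almost every point of $\spt V_\infty$ a tangent blow-up is the weak limit of tangent blow-ups of the $V_i$, each of which is a plane with integer multiplicity, and one checks that the resulting limit density is itself a positive integer. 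Combining rectifiability, integer multiplicity, and the stationarity established in stage (ii) gives that $V_\infty$ is a stationary integral varifold, completing the proof.
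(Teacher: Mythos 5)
This theorem is stated in the paper as quoted background, with the proof deferred entirely to \cite[Section 6.4]{ALLARD} and \cite[Theorem 42.7]{SIMON}, so there is no internal proof to compare against. Your outline correctly reproduces the standard argument from those sources --- weak-$*$ compactness of Radon measures on the Grassmann bundle via local mass bounds, passage of stationarity to the limit by testing the first variation against fixed compactly supported vector fields, and Allard's rectifiability and integer-compactness theorems combined with the density lower bound coming from monotonicity --- which is essentially the same approach the paper implicitly relies on.
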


\begin{Remark}
Notice that in the previous theorem we may have \(M_\infty=\varnothing\). Indeed, if $M_i:=\{x\in\R^{n+1}\;:\; \langle x,\ee_{n+1}\rangle=i\}$, then $\{M_i\}$  is a sequence of minimal hypersurfaces in $\R^{n+1}$ with the Euclidean metric whose area is locally bounded and $M_i\rightharpoonup \varnothing.$
\end{Remark}

Actually, when we know that each $M_{i}$ is stable, then  we can conclude that the convergence above is strong, in the sense that it is smooth away from a closed set, called the {\em singular set}. This is the content of the next theorem (see \cite[Theorem 3]{Schoen-Simon} and \cite[Theorem 18.1]{Wickramasekera}).

\begin{Theorem}[Strong Compactness Theorem \cite{Schoen-Simon}]\label{Strong Compactness Theorem}
Let $\left\{M_{i}\right\}$ be a sequence of stable minimal hypersurfaces in $\mathbb{R}^{n+1}$, with not necessary the canonical metric, with locally bounded area. Then there exist a closed set, Sing, and a stationary integral varifolds, $M_{\infty}$, such that a subsequence of $\left\{M_{i}\right\}$ converges smoothly to $M_{\infty}$ away from Sing. Moreover, the set Sing has Haussdorff dimension at most $n-7$. Hence it is empty for $n<7$.
\end{Theorem}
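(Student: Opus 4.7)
The plan is to deduce the Schoen--Simon strong compactness theorem in two layers on top of the already-quoted weak compactness result (Theorem on compactness for integral varifolds): (i) extract a weak varifold limit, (ii) upgrade to smooth convergence on an open regular set by means of a uniform curvature estimate, and (iii) control the dimension of the complementary singular set by a tangent-cone/dimension-reduction argument. None of these three ingredients is short, but the architecture is standard.

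First, since $\{M_i\}$ has locally bounded area and each $M_i$ is stationary (being minimal) in the ambient metric, Theorem \ref{Compactness Theorem for Integral Varifold} gives a subsequence (not relabelled) and a stationary integral varifold $M_\infty$ with $M_i\rightharpoonup M_\infty$. Define
\begin{equation*}
\mathrm{Reg}:=\{x\in \spt M_\infty : \text{some neighborhood of } x \text{ meets } \spt M_\infty \text{ in a smooth embedded minimal hypersurface}\},
\end{equation*}
and set $\mathrm{Sing}:=\spt M_\infty\setminus \mathrm{Reg}$. By definition $\mathrm{Sing}$ is closed in $\spt M_\infty$, and so $\mathrm{Reg}$ is open.

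The heart of the proof is the Schoen--Simon interior curvature estimate: there exist dimensional constants $C,\theta_0$ such that if $N$ is a stable, orientable minimal hypersurface in a geodesic ball $B_{2\rho}(x)\subset \R^{n+1}$ with locally bounded area and smooth off a set of vanishing $(n-2)$-dimensional Hausdorff measure, then
\begin{equation*}
\sup_{B_\rho(x)\cap N}|A|^2 \le C\rho^{-2},
\end{equation*}
provided $n\le 6$ (and with an analogous estimate at regular points in higher dimensions). The derivation uses Simons' identity
\begin{equation*}
\Delta |A|^2 \ge -2|A|^4 + 2|\nabla |A||^2 + \tfrac{2}{n}|\nabla |A||^2,
\end{equation*}
combined with the stability inequality $\int |A|^2 \varphi^2 \le \int |\nabla \varphi|^2$, a logarithmic cutoff trick and Moser iteration. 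This yields locally uniform bounds on $|A|$ on the subset of $\spt M_\infty$ where regularity holds, and hence, by Arzel\`a--Ascoli and standard minimal-surface regularity, smooth convergence of $M_i$ to $M_\infty$ on $\mathrm{Reg}$ with finite multiplicity. In the general Riemannian setting (the ambient metric is not assumed to be Euclidean), one replaces the Simons identity with its Riemannian analogue and absorbs the curvature of the ambient metric into the constant; this is harmless because convergence is a local assertion and the ambient metrics can be taken uniformly close to a Euclidean one in small charts.

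The final and most delicate step is Federer's dimension-reduction argument for $\mathrm{Sing}$. At any $x\in \mathrm{Sing}$, rescalings $\eta_{x,r_\#} M_\infty$ subconverge (using monotonicity of the mass ratio for stationary integral varifolds) to a tangent cone $C$ which is itself a stable stationary integral varifold in $\R^{n+1}$, minimal in the Euclidean metric. If $\dim_{\mathcal H}\mathrm{Sing}>n-7$, one iterates: any tangent cone $C$ has $\dim_{\mathcal H}\mathrm{Sing}(C)>0$, so after splitting off translation-invariant directions one arrives at a stable minimal hypercone $\Sigma\subset \R^{k+1}$ with $k\le 6$ and an isolated singularity at the origin. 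The classical theorem of Simons states that the only such stable minimal hypercone in $\R^{k+1}$ for $k\le 6$ is a hyperplane, contradicting the fact that $0$ was singular. Hence $\dim_{\mathcal H}\mathrm{Sing}\le n-7$, and in particular $\mathrm{Sing}=\varnothing$ when $n<7$.

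The main obstacle is the Schoen--Simon curvature estimate itself: the interplay between stability, Simons' identity and iteration arguments is technically heavy, and the restriction $n\le 6$ enters precisely through the exponents that make Moser iteration close. The dimension-reduction step is conceptually clean once Simons' cone rigidity is invoked, so in practice the whole argument rests on (a) obtaining the $\varepsilon$-regularity / curvature bound for stable limits, and (b) Simons' classification of stable minimal hypercones of low dimension.
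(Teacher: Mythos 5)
The paper does not prove this statement at all: it is imported verbatim as a background result, with the proof delegated to \cite{Schoen-Simon} (Theorem~3) and \cite{Wickramasekera} (Theorem~18.1). So there is no ``paper proof'' to match; what you have written is an outline of the Schoen--Simon argument itself, and as such it has the right global architecture (weak varifold limit, $\varepsilon$-regularity on the regular set, Federer dimension reduction plus Simons' classification of stable cones for the bound $\dim_{\mathcal H}\mathrm{Sing}\le n-7$).

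Two points in your sketch are genuinely problematic rather than merely compressed. First, the pointwise curvature estimate $\sup_{B_\rho\cap N}|A|^2\le C\rho^{-2}$ is available only for $n\le 6$; for $n\ge 7$ no such estimate can hold (the Simons cone is stable), so it cannot be the engine that produces smooth convergence on $\mathrm{Reg}$ in all dimensions. The correct engine is the Schoen--Simon \emph{sheeting theorem}: if a stable minimal hypersurface is varifold-close to a multiplicity-$m$ hyperplane in a cylinder, it decomposes into $m$ ordered graphs with scale-invariant estimates. One applies this to the $M_i$ near any point of $\spt M_\infty$ admitting a planar tangent cone, obtaining smooth graphical convergence there; your parenthetical ``analogous estimate at regular points in higher dimensions'' is precisely the nontrivial content being assumed. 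Second, defining $\mathrm{Reg}$ as the set where $\spt M_\infty$ happens to be smooth and then asserting smooth convergence there begs the question: because the limit may have multiplicity, Allard regularity does not apply, and it is exactly the stability of the approximating $M_i$ (via the sheeting theorem) that rules out sheets accumulating onto a smooth limit in a non-graphical way. The dimension-reduction step and the use of Simons' nonexistence of non-planar stable minimal hypercones in $\R^{k+1}$, $k\le 6$, are correctly described, modulo the standard verification that tangent cones inherit stability. (Also, your Simons inequality is miswritten: the correct form is $\tfrac12\Delta|A|^2\ge -|A|^4+\bigl(1+\tfrac2n\bigr)\bigl|\nabla|A|\bigr|^2$.)
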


As in \cite{MPGSHS15}, we will regard all translating solitons as minimal hypersurfaces in $\R^{n+1}$ with Ilmanen's metric. Then we use the maximum principle and compactness theorems to get a point of contact with a given smooth hypersurface, in our case either hyperplanes parallel to $\ee_{n+1}$ or an element in the family of tilted grim reaper cylinders. In our search for points of contact, it will be interesting to know if a given translator is stable. Hence, we can use the previous theorem to get smooth convergence and to obtain more information about our hypersurface. Similarly to what happens to minimal hypersurfaces in $\mathbb{R}^{n+1}$ with the canonical metric, it is possible show that graphs are stable. This simple criterion is due to L. Shahriyari for $n=2$.

\begin{Theorem}[Shahriyari  \cite{SHAHRIYARI15}]\label{Shahriyari-Xin}
Let $M$ be a translator hypersurface which is a graph over a hyperplane. Then, as minimal graphical hypersurface in $\mathbb{R}^{n+1}$ with Ilmanen's metric, is stable. 
\end{Theorem}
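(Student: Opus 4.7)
The plan is to exhibit an explicit strictly positive solution to the Jacobi equation on $M$ (with respect to the Jacobi operator $L_g$ of the Ilmanen metric $g=e^{\frac{2}{n}x_{n+1}}\langle\cdot,\cdot\rangle$), and then to conclude stability by the standard logarithmic-substitution trick which turns a positive solution of $L_g u=0$ into a verification of the stability inequality.

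The key observation is that every ambient translation of $\R^{n+1}$ preserves the translator equation $\mathbf{H}=\ee_{n+1}^{\perp}$: both the Euclidean mean curvature vector and the Euclidean unit normal $N$ are invariant under translation. Equivalently, translations along $\ee_{1},\ldots,\ee_{n}$ are isometries of $g$, while translations along $\ee_{n+1}$ rescale $g$ by a constant and are therefore homotheties; in both cases minimality in $(\R^{n+1},g)$ is preserved. Hence, for every $v\in\R^{n+1}$, the family $\{M+tv\}_{t\in\R}$ is a smooth variation of $M$ through $g$-minimal hypersurfaces, and so the $g$-normal component of the variation vector $v$ is a solution of $L_g u=0$ on $M$. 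Using that the $g$-unit normal is $N_g=e^{-x_{n+1}/n}N$, this solution is
\begin{equation*}
u_v \;:=\; g(v,N_g) \;=\; e^{x_{n+1}/n}\,\langle v,N\rangle.
\end{equation*}

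Now let $\nu_{0}$ be a Euclidean unit normal to the hyperplane $\Pi$ over which $M$ is a graph, with the orientation on $M$ chosen so that $\langle N,\nu_{0}\rangle > 0$ everywhere. Then $u_{\nu_{0}}$ is a strictly positive smooth function satisfying $L_g u_{\nu_{0}}=0$. For any $\phi\in C_c^{\infty}(M)$, writing $\phi=u_{\nu_{0}}\psi$ and integrating $u_{\nu_{0}}L_g u_{\nu_{0}}=0$ against $\psi^{2}$, one obtains after integration by parts
\begin{equation*}
\int_M \left(|\nabla\phi|_g^{2} - (|A|_g^{2} + \mathrm{Ric}_g(N_g,N_g))\,\phi^{2}\right) d\mu_g \;=\; \int_M u_{\nu_{0}}^{2}\,|\nabla\psi|_g^{2}\,d\mu_g \;\geq\; 0,
\end{equation*}
which is precisely the stability inequality for $M$ in $(\R^{n+1},g)$.

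The only substantive step to check is that vertical translations, which do not preserve $g$ on the nose, still send $g$-minimal hypersurfaces to $g$-minimal hypersurfaces; but this is automatic, since a constant conformal rescaling of the ambient metric multiplies the mean curvature vector by a constant and so preserves its vanishing. All remaining items, in particular the equivalence between ``existence of a positive Jacobi function'' and ``stability'' exhibited by the computation above, are routine.
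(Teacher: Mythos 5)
Your proof is correct and is essentially the argument behind the cited result: the paper itself defers to Shahriyari for this statement, and her proof likewise extracts a positive Jacobi field from the translation invariance of the translator equation together with the graph hypothesis (the function $\langle N,\nu_{0}\rangle$ is nowhere zero for a graph), and then applies the standard positive-solution (Fischer--Colbrie--Schoen) criterion for stability. The only difference is presentational: you work directly in the conformal Ilmanen metric, correctly treating vertical translations as homotheties of $g$, whereas the source works with the equivalent drift Jacobi operator $\Delta+\langle\nabla\,\cdot\,,\ee_{n+1}\rangle+|A|^{2}$, the same operator that appears later in the paper in the equation satisfied by $\xi_{v}$.
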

\begin{Remark}
Although Shahriyari proved the above result for $n=2$, it is straightforward to check that the same proof works for arbitrary dimensions. Y. L. Xin \cite{XIN16} proved that if $M$ is a vertical graph, then it is area-minimizing as a hypersurface in $\mathbb{R}^{n+1}$ with Ilmanen's metric.
\end{Remark}

Another theorem that we will need in this paper is a version of Allard's regularity theorem. This theorem gives us a powerful criterion to decide when the weak convergence is in fact smooth. This version of the Allard's regularity theorem due to White \cite{WHI16}.

\begin{Theorem}[Allard's Regularity Theorem]\label{Allard's Regularity Theorem}
Let $\left\{M_{i}\right\}$ be a sequence of properly embedded minimal hypersurfaces without boundary in $\mathbb{R}^{n+1}$, with not necessarily the canonical metric. Suppose that sequence $\left\{M_{i}\right\}$ in $\mathbb{R}^{n+1}$ converges weakly to $S\subset M\subset\mathbb{R}^{n+1}$, where $M$ is a connected smoothly embedded hypersurface, and some point in $M$ has a neighbourhood $U \subset \mathbb{R}^{n+1}$ such that $\{M_{i} \cap U\}$ converges weakly to $M \cap U$ with multiplicity one, then $\{M_{i}\}$ converges to $M$ smoothly and with multiplicity one everywhere.
\end{Theorem}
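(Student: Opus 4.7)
The plan is a standard continuity/connectedness argument. Define
\[
E = \{\, p \in M : \text{some neighborhood } U_p \text{ of } p \text{ satisfies } M_i \cap U_p \to M \cap U_p \text{ smoothly with multiplicity one}\,\}.
\]
I would show $E$ is nonempty, open, and closed in the connected hypersurface $M$, so that $E = M$ and the theorem follows.

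\emph{Nonemptiness and openness.} First I would apply the classical Allard regularity theorem in the neighborhood $U$ of the distinguished point $p_0$ given by the hypothesis: the weak multiplicity-one convergence to the smooth hypersurface $M \cap U$ gives density one at every point of $M \cap U$ and density ratios tending to one for the $M_i$, so (possibly shrinking $U$) each $M_i$ is a $C^{1,\alpha}$ graph over $M \cap U$ for $i$ large. Schauder bootstrap for the minimal surface equation in the ambient Riemannian metric then upgrades this to smooth convergence, so $p_0 \in E$. Openness of $E$ is immediate from its definition.

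\emph{Closedness.} Let $p \in M \cap \overline{E}$ and choose $p_k \in E$ with $p_k \to p$. By Theorem \ref{Compactness Theorem for Integral Varifold}, along a subsequence the $[M_i]$ converge weakly to a stationary integer varifold $V$, necessarily supported in $S \subset M$. Take $r > 0$ so small that $M \cap B_r(p)$ is a connected smooth chart. The constancy theorem for stationary integer rectifiable varifolds whose support lies in a smooth embedded $n$-submanifold of $\R^{n+1}$ forces
\[
V \llcorner B_r(p) = k \cdot [M \cap B_r(p)]
\]
for some constant nonnegative integer $k$. For all sufficiently large $\ell$ one has $p_\ell \in B_r(p)$; but near $p_\ell$ the $M_i$ converge smoothly with multiplicity one to $M$, so $V$ agrees with $[M]$ there. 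Constancy therefore pins $k = 1$, and in particular $\Theta(V, p) = 1$. A second application of Allard's regularity theorem at $p$, to the sequence $M_i$ whose weak limit near $p$ is now known to be $[M]$ with density one, produces a $C^{1,\alpha}$ graphical representation of the $M_i$ over $M$ in a neighborhood of $p$, upgraded by elliptic regularity to smooth multiplicity-one convergence. Hence $p \in E$, and $E$ is closed in $M$.

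The chief technical point is the closedness step: one must propagate the multiplicity-one information from $E$ into a neighborhood of a limit point $p$, which rests on two ingredients, namely the constancy theorem for stationary integral varifolds supported in a smooth hypersurface, and a second invocation of Allard's regularity theorem applied to the individual $M_i$ at $p$. A related subtlety is that $M$ is not hypothesized to be minimal a priori, but the stationarity of $V$ combined with $V \llcorner B_r(p) = [M \cap B_r(p)]$ forces $M$ to be minimal on this chart, which is what is needed to legitimately apply Allard at $p$.
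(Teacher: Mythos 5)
The paper does not actually prove this statement: it is quoted as background and attributed to White \cite{WHI16}, so there is no in-paper argument to compare against. Your open--closed argument is correct and is essentially the standard proof of this result. The three steps are sound: classical Allard plus elliptic bootstrap at the distinguished point gives nonemptiness; openness is definitional; and for closedness the constancy theorem (applied to the stationary limit varifold $V$, whose support lies in the connected smooth chart $M\cap B_r(p)$) pins the multiplicity at the constant value $1$ propagated from the nearby points of $E$, after which Allard applies again at $p$. You are also right to flag that minimality of $M$ is not a hypothesis but is forced by constancy once $V\llcorner B_r(p)=[M\cap B_r(p)]$. Two small points you could tighten: the weak limit $V$ of the full sequence already exists by hypothesis, so no passage to a subsequence is needed in the closedness step; and the conclusion implicitly requires that no mass of the $M_i$ survives away from $M$, which follows from $\spt V\subset M$ together with the monotonicity formula (a point of $M_i$ in a compact set disjoint from $M$ would carry a definite amount of area there). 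Neither affects the validity of the argument.
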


\section{The Dynamics Lemma} \label{sec:lema}
Fix $\theta \in [0, \pi/2)$ and define $\displaystyle{u_{\theta}:=-\sin (\theta) \cdot  \ee_n+\cos(\theta) \cdot \ee_{n+1}}$. For a given $r>0$, we consider the cylinder $$\displaystyle{\mathcal{C}:=\{x=(x_1, \ldots,x_{n+1}) \in \mathbb{R}^{n+1} \; : \; x_1^2+\langle u_\theta,x\rangle^2 \leq r^2\}}.$$
Throughout this section $M^n$ will be a complete, connected, properly embedded translating soliton in $\mathbb{R}^{n+1}$ such that, outside $\mathcal{C}$, $M$ is $C^{1}$--asymptotic to two half-hyperplanes $\mathcal{H}_{1}$ and $\mathcal{H}_{2}$.
In this section, we are going to obtain our main lemma which shows that such a soliton has a surprising amount of internal dynamical periodicity. As a consequence, we shall deduce several properties about the asymptotic geometry of $M$.

\begin{Lemma}[Dynamics Lemma]\label{Compactness Lemma}
Let $M$ be a hypersurface as above. Suppose that $\left\{b_{i}\right\}_{i\in \mathbb{N}}$ is a sequence in $[\ee_1,u_{\theta}]^\perp$ and let $\left\{M_{i}\right\}_{i\in \mathbb{N}}$ be a sequence of hypersurfaces given by $M_{i}:=M+b_{i}.$ Then, after passing to a subsequence, $\{M_{i}\}$ converges weakly to a connected stationary integral varifold $M_{\infty}$. Moreover, $M_{\infty}$ is smooth outside the cylinder $\mathcal{C}$ and away from a singular set of Hausdorff dimension at most $n-7$, and the convergence is smooth outside the cylinder and away from the singular set.
\end{Lemma}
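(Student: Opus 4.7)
The strategy is to view each $M_i = M + b_i$ as a minimal hypersurface in the Ilmanen manifold $(\R^{n+1}, g)$ with $g = e^{\frac{2}{n}x_{n+1}}\langle\cdot,\cdot\rangle$ and then invoke the varifold compactness theorems of Section \ref{compactness}. Since the translator equation $\mathbf{H}=\ee_{n+1}^{\perp}$ is translation-invariant, each $M_i$ is still a translator, hence minimal in $(\R^{n+1},g)$ by Theorem \ref{Ilmanen}. The crucial point is that $b_i \in [\ee_1,u_\theta]^\perp$ lies along the axis direction of $\mathcal{C}$, which is also tangent to each $\partial\mathcal{H}_j$; consequently $\mathcal{C}+b_i=\mathcal{C}$ and $\mathcal{H}_j+b_i=\mathcal{H}_j$ for $j=1,2$. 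Writing $M\setminus\mathcal{C}$ as the graph of $\varphi$ over $\mathcal{H}_1\cup\mathcal{H}_2$, the translate $M_i\setminus\mathcal{C}$ is the graph of $\varphi_i(q):=\varphi(q-b_i)$; since $b_i$ is orthogonal to the inward normal $w$ of $\partial\mathcal{H}_j$, the asymptotic regions $\mathcal{H}_j(\delta)$ are preserved by the translation by $b_i$, so the $C^1$-asymptotic estimates on $\varphi_i$ are uniform in $i$.

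Next, I obtain uniform local area bounds for $\{M_i\}$: outside $\mathcal{C}$ they follow immediately from the uniform graph representation, while inside or near $\mathcal{C}$ they follow from the monotonicity formula applied to surfaces of bounded mean curvature --- indeed $|H|=|\langle\ee_{n+1},\xi\rangle|\leq 1$ on any translator. Theorem \ref{Compactness Theorem for Integral Varifold} then produces a subsequence converging weakly to a stationary integral varifold $M_\infty$ in $(\R^{n+1},g)$. Moreover, outside $\mathcal{C}$ each $M_i$ is a vertical graph, hence stable by Theorem \ref{Shahriyari-Xin}, so Theorem \ref{Strong Compactness Theorem} upgrades the convergence on $\R^{n+1}\setminus\mathcal{C}$ to smooth convergence with locally finite multiplicity, away from a closed singular set $\operatorname{Sing}$ of Hausdorff dimension at most $n-7$. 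Combined with the uniform $C^1$-asymptotic estimates and Allard's regularity theorem (Theorem \ref{Allard's Regularity Theorem}), the smooth limit on $\R^{n+1}\setminus(\mathcal{C}\cup\operatorname{Sing})$ is $\mathcal{H}_1\cup\mathcal{H}_2$ with multiplicity one.

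The main obstacle is proving that $M_\infty$ is connected, since its smooth part outside $\mathcal{C}$ is the disjoint union $\mathcal{H}_1\cup\mathcal{H}_2$. I must therefore produce a bridge of $\spt M_\infty$ inside $\mathcal{C}$ that meets both half-hyperplanes. Non-emptiness of the bridge follows because each $M_i$ is connected with ends asymptotic to the distinct $\mathcal{H}_1$ and $\mathcal{H}_2$: it must cross any hyperplane separating them, and the mean-curvature monotonicity turns such a crossing into a uniform positive lower bound on mass in a fixed neighborhood, which persists under weak limits. To rule out the bridge forming a component disjoint from $\mathcal{H}_1\cup\mathcal{H}_2$, I apply the strong maximum principle for varifolds (Theorem \ref{Strong Maximum Principle}): sweeping the interior of $\mathcal{C}\setminus(\mathcal{H}_1\cup\mathcal{H}_2)$ by a suitable mean-convex foliation, any such component would touch a leaf from its mean-convex side and be forced to contain it, yielding an open set in $\spt M_\infty$ incompatible with the asymptotic structure. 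This appeal to the varifold maximum principle --- rather than the smooth one used in \cite{MPGSHS15} --- is precisely the new tool that allows us to dispense with the topological hypothesis of bounded genus, and is the most delicate step of the argument.
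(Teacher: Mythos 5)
Your overall skeleton (Ilmanen's metric, weak compactness for integral varifolds, stability of the graphical wings plus Schoen--Simon outside $\mathcal{C}$, and the varifold maximum principle for connectedness) is the same as the paper's, but there is a genuine gap at the single hardest point of the lemma: the local area bound \emph{inside} the cylinder. You assert that it ``follows from the monotonicity formula applied to surfaces of bounded mean curvature.'' The monotonicity formula for hypersurfaces with $|H|\leq 1$ gives that the density ratio $r^{-n}\mu(B_r(p))$ is almost monotone in $r$; this yields a \emph{lower} bound on area in terms of the density at $p$, or an upper bound at small scales \emph{in terms of an area bound at a larger scale} --- which is exactly what you do not have inside $\mathcal{C}$, where $M_i$ is not graphical and could a priori pile up unboundedly many sheets. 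The paper handles this by introducing the area blow-up set $\mathcal{U}$ of the sequence $\{M_i^-(\delta)\}$ (whose boundaries \emph{are} controlled, being graphs), observing that $\mathcal{U}$ is confined to a fixed non-horizontal cylinder, and then killing it with White's Strong Barrier Principle by sweeping a tilted grim reaper cylinder (or, in the vertical case, a product of a translating catenoid with $\R^{n-2}$) until a first contact. Without this step, or a substitute for it, Theorem \ref{Compactness Theorem for Integral Varifold} cannot be applied and the proof does not start.

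A second, smaller issue is the connectedness argument. Ruling out components of $M_\infty$ disjoint from $\mathcal{H}_1\cup\mathcal{H}_2$ (which you do, correctly in spirit, by a sweep-out and Theorem \ref{Strong Maximum Principle}) still leaves open the possibility of exactly two components, one containing each wing; your ``bridge'' only shows that $\spt M_\infty\cap\mathcal{C}$ carries mass, not that it joins the wings. Moreover, the mass lower bound you extract from a crossing of a separating hyperplane is at points of $M_i\cap\mathcal{C}$ that may escape to infinity along the (noncompact) axis of $\mathcal{C}$, in which case no mass survives in a fixed compact set. The paper instead argues that any loop meets the limit of the embedded boundaryless hypersurfaces $M_i$ in an even number of points counted with multiplicity, and that two separate wings would permit a loop meeting $\spt M_\infty$ transversally in a single smooth point; this parity argument is what actually ties the two wings together and should be added to your proof.
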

\begin{proof} The strategy of the proof follows a similar argument as in \cite{MPGSHS15}. However, as we do not assume any hypotheses about the topology (inside the cylinder) besides the fact that we are working in arbitrary dimensions, then we have to use the Theorem \ref{Compactness Theorem for Integral Varifold} to overcome the difficulties. 

From our assumption on $M$, there exist $r>0$ and two half-hyperplanes, $\mathcal{H}_{1}$ and $\mathcal{H}_{2}$, such that the connected components of $M$ outside $\mathcal{C}$ are $C^1-$asymptotic to them. Let $w_{1}$ and $w_{2}$ are the unit inward normal vectors of $\partial\mathcal{H}_{1}$ and $\partial\mathcal{H}_{2}$, respectively. For every $\delta>0$ consider the closed half-hyperplanes 
$\mathcal{H}_{k}(\delta):=\{p+tw_{k}\; : \;p\in \partial\mathcal{H}_{k}$ and  $t\geq\delta \},$
$k=1,2$. 
\begin{figure}[htpb]
\begin{center}
\includegraphics[width=.65\textwidth]{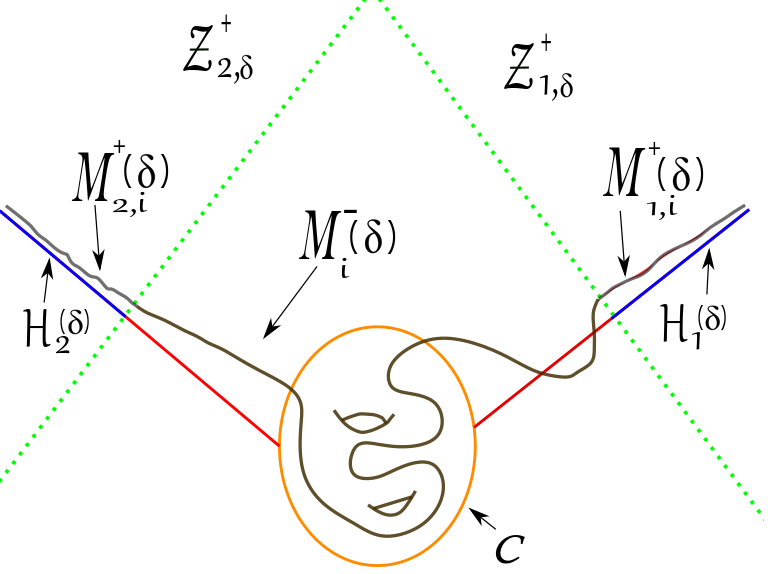}
\end{center}
\caption{\small Transversal section of the behaviour of $M_{i}$.}\label{figure 1}
\end{figure}
For $k=1,2$, let $\mathcal{Z}^{+}_{k,\delta}$ denote the half-space in $\mathbb{R}^{n+1}$ which contains $\mathcal{H}_{1+k}(\delta)$ and whose boundary contains $\partial\mathcal{H}_{1+k}(\delta)$ and is perpendicular to $w_{k}$. We also call
$
\mathcal{Z}^{-}_{k,\delta}:=(\mathbb{R}^{n+1}-\mathcal{Z}^{+}_{k,\delta})\cup\partial\mathcal{Z}^{+}_{k,\delta}
,$
for $k=1,2$.

Now consider the sets $M_{k,i}^{+}(\delta):=M_{i}\cap\mathcal{Z}^{+}_{k,\delta}$ and $M_{i}^{-}(\delta):=M_{i}\cap\mathcal{Z}^{-}_{1,\delta}\cap\mathcal{Z}^{-}_{2,\delta}$,  for all $k\in\{1,2\}$.  In order to apply the compactness results in \S \ref{compactness}, we must deduce that the sequences $\left\{M_{k,i}^{+}(\delta)\right\}$ and $\left\{M_{i}^{-}(\delta)\right\}$ have locally bounded area. This can be done by using  similar ideas as in \cite[Lemma 3.1]{MPGSHS15}. Let us explain something about the proof of this fact. First for all, we should show that $\left\{M_{k,i}^{+}(\delta)\right\}$ has locally bounded area for a sufficiently large $\delta>0$. This can be done by using the fact that $M_{k,i}^{+}(\delta)$ can be represented as graph of a function defined over a half-hyperplane. Now, we fix a sufficiently large $\delta$, using the fact that the boundary of $M_{i}^{-}(\delta)$ can be represented as graph, and obtain that  the boundary of the sequence $\{M_{i}^{-}(\delta)\}$ has locally bounded area. Now the area blow-up set defined by 
\begin{equation}\label{blowup}
\mathcal{U}:=\{x\in\R^{n+1}\;:\limsup\operatorname{Area}(M_{i}^{-}(\delta)\cap B(p,r))=\infty \operatorname{for\ every} r>0 \}
\end{equation}
lies inside a large non-horizontal cylinder, where $B(p,r)$ is the geodesic ball in $\R^{n+1}$ with Ilmanen's metric. We need to prove that $\mathcal{U}=\varnothing$ to get that the sequence $\{M_{i}^{-}(\delta)\}$ has locally bounded area too. Arguing by contradiction, let us suppose that $\mathcal{U}\neq\varnothing$.  In this case, we could take a tilted grim reaper cylinder whose axis is perpendicular to $\mathcal{C}$ and it does not intersect $\mathcal{C}$. Now we could move the tilted grim reaper cylinder until we get a first point of contact with $\mathcal{U}$, but the Strong Barrier Principle \cite[Theorems 2.6 and 7.3]{WHI15} says that $\mathcal{U}$ must be the tilted grim reaper cylinder, which is absurd.

As the sequence $\left\{M_{i}\right\}$ has locally bounded area, by Theorem \ref{Compactness Theorem for Integral Varifold} there exists a subsequence of $\left\{M_{i}\right\}$, which we still denote by  $\left\{M_{i}\right\}$, that converges weakly to the stationary integral varifold $M_{\infty}$. Furthermore, as outside $\mathcal{C}$ both connected components of $M_{i}$ are graphs (in particular stable by Theorem \ref{Shahriyari-Xin}), we can apply Theorem \ref{Strong Compactness Theorem} to conclude that the convergence is smooth outside $\mathcal{C}$ and away from a singular set with dimension at most $n-7$. This implies that $M_{\infty}$ is smooth outside $\mathcal{C}$ and away from the singular set.\ 

Using this last fact, we can conclude the connectedness of $M_{\infty}$ as follows. Taking into account that any loop in $\mathbb{R}^{n+1}$ intersects $M_{\infty}$ in an even numbers of points (counting multiplicity), then both wings of $M_\infty$ must lie in the same connected component. Indeed, if this was not true, then we could choose the above mentioned loop intersecting $M_{\infty}$ at one unique point (because $M_{\infty}$ is smooth outside a sufficiently large cylinder and away from singular set) which is absurd. This implies that if $M_{\infty}$ is not connected, there would be a connected component inside the cylinder. In this case we can consider a suitable tilted grim reaper (whose axis is perpendicular to $u_{\theta}$) of sufficiently large coordinate in the direction of $u_{\theta}$ so that it is not intersect the solid cylinder. Now, if we move it in the direction of $-u_{\theta}$ until it touchs the component inside the cylinder at a first point of contact, then we get a contraction because the component inside the cylinder must be the whole tilted grim reaper by Theorem \ref{Strong Maximum Principle}.  Hence $M_{\infty}$ is connected.
\end{proof}

\begin{Remark} \label{re:20}
Notice that the proof above shows a stronger property. Suppose $M_{i}\rightharpoonup M_{\infty},$ where $\{M_{i}\}$ and $M_{\infty}$ are as in the previous lemma. Take any sequence $\left\{c_{j}\right\}_{j\in \mathbb{N}}$ in the subspace $[\ee_1,u_\theta]^\perp$ and define the sequence $\left\{\mathfrak{M}_{j}:=M_{\infty}+c_{j}\right\}$ of connected stationary integral varifolds. Reasoning as above, we have that the sequence $\left\{\mathfrak{M}_{j}\right\}$ has locally bounded area on the compacts sets and the singular set of each $\mathfrak{M}_{j}$ has Hausdorff dimension at most $n-7$ . Therefore by \cite[Theorem 3]{Schoen-Simon} or \cite[Theorem 18.1]{Wickramasekera} we can assume, up to a subsequence, that $\mathfrak{M}_{j}\rightharpoonup \mathfrak{M}_\infty,$ where $\mathfrak{M}_\infty$ is a stationary integral varifold whose singular set has Hausdorff dimension at most $n-7$ and away from the singular it is stable hypersurface. Then, we can apply the argument above to conclude that $\mathfrak{M}_{\infty}$ satisfies the same properties as $M_{\infty}$.
\end{Remark}

Now we are going to use the previous lemma to conclude that $w_1$ and $w_2$ are parallel to $u_\theta$. Moreover, if the half-hyperplanes $\mathcal{H}_1$  and $\mathcal{H}_2$ are parts of the same hyperplane, then $M$ is a hyperplane parallel to $\ee_{n+1}$. 
\begin{Lemma}\label{Characterization of the Hyperplane}
Let $M$ be a hypersurface as above. Then, the half-hyperplanes $\mathcal{H}_{1}$ and $\mathcal{H}_{2}$ must be parallel to $\ee_{n+1}$. Moreover, if $\mathcal{H}_{1}$ and $\mathcal{H}_{2}$ are parts of the same hyperplane $\Pi$, then $M$ must coincide with $\Pi.$
\end{Lemma}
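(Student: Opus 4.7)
My plan is to split the statement into two assertions and handle them sequentially, both times combining Lemma \ref{Compactness Lemma} (the Dynamics Lemma) with a strong maximum principle:
(P1) every $\mathcal{H}_k$ is parallel to $\ee_{n+1}$;
(P2) if $\mathcal{H}_1,\mathcal{H}_2\subset\Pi$, then $M=\Pi$.

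For (P1) I would first observe that the containment $\partial\mathcal{H}_k\subset\mathcal{C}$ forces each $\partial\mathcal{H}_k$ to be an $(n-1)$-dimensional affine subspace parallel to the axis $[\ee_1,u_\theta]^\perp$ of $\mathcal{C}$, so the inward normal $w_k$ lies in $[\ee_1,u_\theta]$. I would then apply Lemma \ref{Compactness Lemma} to any sequence $\{b_i\}\subset[\ee_1,u_\theta]^\perp$ with $|b_i|\to\infty$: translation by $b_i$ leaves both $\mathcal{C}$ and each $\mathcal{H}_k$ invariant setwise, and the $C^1$-decay of the graph functions $\varphi_k$ produces smooth convergence $(M+b_i)\cap(\R^{n+1}\setminus\mathcal{C})\to\mathcal{H}_1\cup\mathcal{H}_2$. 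Thus each $\mathcal{H}_k$ is a smooth piece of the resulting stationary integral varifold $M_\infty$ in Ilmanen's metric, so the hyperplane $\Pi_k\supset\mathcal{H}_k$ is itself a translator; a hyperplane is a translator if and only if it is parallel to $\ee_{n+1}$, which proves (P1).

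For (P2), writing $\Pi=\{x_1=c\}$ via (P1), the asymptotic hypothesis combined with $M\cap\mathcal{C}\subset\{|x_1|\leq r\}$ yields $s_0:=\sup_M x_1<\infty$; my goal is to prove $s_0=c$, together with the analogous $\inf_M x_1=c$, which forces $M\subset\Pi$ and hence $M=\Pi$ by connectedness of the hyperplane $\Pi$. Assume for contradiction that $s_0>c$. If the supremum is attained at an interior point $p\in M$, then the translator $\{x_1=s_0\}$ is tangent to $M$ at $p$ from one side, and Theorem \ref{Interior Maximum Principle} forces $M=\{x_1=s_0\}$, contradicting the asymptotic condition. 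Otherwise I would pick $p_i\in M$ with $x_1(p_i)\to s_0$; since $|x_1-c|<\epsilon<s_0-c$ on $M\setminus\mathcal{C}$, the $p_i$ eventually lie in $\mathcal{C}$, so $|\langle u_\theta,p_i\rangle|\leq r$ and the escape to infinity takes place along $[\ee_1,u_\theta]^\perp$. Projecting $p_i$ onto $[\ee_1,u_\theta]^\perp$ produces $b_i$ with $|b_i|\to\infty$, to which I would apply Lemma \ref{Compactness Lemma}; a subsequence of $p_i-b_i$ converges to a point $p_\infty\in\spt M_\infty$ with $x_1(p_\infty)=s_0$. Since $\spt M_\infty\subset\{x_1\leq s_0\}$ and the boundary hyperplane $\{x_1=s_0\}$ is itself a translator and therefore mean-convex in Ilmanen's metric, Theorem \ref{Strong Maximum Principle} yields $\{x_1=s_0\}\subset\spt M_\infty$. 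However $\{x_1=s_0\}$ is unbounded along $[\ee_1,u_\theta]^\perp$ and hence contains points outside $\mathcal{C}$, while outside $\mathcal{C}$ the support of $M_\infty$ coincides with $\mathcal{H}_1\cup\mathcal{H}_2\subset\{x_1=c\}$, a contradiction.

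The main technical subtlety I expect is justifying $p_\infty\in\spt M_\infty$ from the weak varifold convergence inside $\mathcal{C}$ (where Lemma \ref{Compactness Lemma} does not provide smooth convergence), together with verifying that $M_\infty$ restricted to $\{x_1\leq s_0\}$ satisfies the area-minimizing-to-first-order hypothesis of Theorem \ref{Strong Maximum Principle}; both points should follow from standard features of the weak convergence of integral varifolds and the global stationarity of $M_\infty$ in Ilmanen's metric.
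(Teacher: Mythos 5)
Your treatment of (P2) is essentially the paper's own argument (rule out an interior extremum of $x_1$ by the maximum principle, then convert a non-attained extremum into a genuine contact point via the Dynamics Lemma and apply Theorem \ref{Strong Maximum Principle} to the half-space $\{x_1\le s_0\}$), and it is correct up to a cosmetic point noted below. The problem is (P1), where your argument has a fatal gap.

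The gap is the claim that translating by $b_i\in[\ee_1,u_\theta]^\perp$ with $|b_i|\to\infty$ makes the wings converge to the half-hyperplanes themselves, so that $\mathcal{H}_k\subset\spt M_\infty$. The $C^1$-decay of $\varphi_k$ is decay in the direction $w_k$ normal to $\partial\mathcal{H}_k$, i.e.\ as one moves into $\mathcal{H}_k(\delta)$ with $\delta\to\infty$; a translation $b_i$ parallel to $\partial\mathcal{H}_k$ maps each set $\mathcal{H}_k(\delta)$ to itself and does not push any fixed compact region deeper into the wing, so the graph function of $M+b_i$ over a fixed neighbourhood of $\partial\mathcal{H}_k$ is just a shifted copy of $\varphi_k$ and need not tend to $0$. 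The tilted grim reaper cylinder itself is a counterexample: it is invariant under every such $b_i$, so $M_\infty=M$ and $\spt M_\infty$ contains no flat piece at all. Consequently the deduction ``$\mathcal{H}_k$ is a minimal piece of $M_\infty$, hence $\Pi_k$ is a translator, hence $\Pi_k\parallel\ee_{n+1}$'' never gets off the ground, and you have no substitute for it. (The same misconception reappears harmlessly in (P2), where you assert that $\spt M_\infty$ coincides with $\mathcal{H}_1\cup\mathcal{H}_2$ outside $\mathcal{C}$; there you only need that, far from $\mathcal{C}$ in the $u_\theta$-direction, $\spt M_\infty$ lies within $\epsilon$ of $\Pi$, which is true and suffices for the contradiction with $\{x_1=s_0\}\subset\spt M_\infty$.)

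What is actually needed for (P1) is a barrier argument exploiting the tilt, which is how the paper proceeds: if $w_1$ has a nonzero $\ee_1$-component, the wing asymptotic to $\mathcal{H}_1$ drifts unboundedly in the $\pm\ee_1$-direction as $\langle x,u_\theta\rangle$ varies, so one can place a tilted grim reaper cylinder $\mathcal{G}^{t_0,l_0}$ disjoint from that wing and slide it in the direction $-u_\theta$; the tilted wing must meet it. A first interior contact is excluded by Theorem \ref{Strong Maximum Principle}, and the ``contact at infinity'' case is converted into a genuine contact point exactly by the Dynamics Lemma (translating $M$ by the $[\ee_1,u_\theta]^\perp$-components of the almost-touching points and using the monotonicity formula to see $p_\infty\in\spt M_\infty$), after which Theorem \ref{Strong Maximum Principle} forces $M_\infty=\mathcal{G}^{t_0,l_0-l_1}$, contradicting the asymptotics via Theorem \ref{Allard's Regularity Theorem}. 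You correctly identified that $w_k\in\operatorname{span}\{\ee_1,u_\theta\}$ and that the monotonicity formula is the tool for producing $p_\infty\in\spt M_\infty$, but without the sliding-barrier step the first assertion of the lemma remains unproved.
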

\begin{proof}We will proceed by contraction. Assume to the contrary that the half-hyperplane
\begin{equation*}
\displaystyle{\mathcal{H}_{1}=\left\{p+tw_{1}\; : \; p \in \partial \mathcal{H}_{1},\ t>0\right\}}
\end{equation*}
is not parallel to direction of translation $\ee_{n+1}$. Notice that $\ee_{j}$ and $E_{n}$ are perpendicular to $w_{1}$ for all $j\in\{2,\ldots,n-1\},$ where $E_{n}:=\cos(\theta)\ee_{n}+\sin(\theta)\ee_{n+1}$. Therefore $w_{1}$ is not parallel to $u_{\theta}.$ In this case, $w_{1}$ form a non-vanishing angle only with $\ee_{1}$, that we denote by $\alpha:= \measuredangle(\ee_1,w_1)$. Suppose that $\cos \alpha>0$. For given real numbers $t$ and $l$, we consider the tilted grim reaper cylinder:
\begin{equation*}
\displaystyle{\mathcal{G}^{t,l}:=\left\{ F_\theta(x_1-t,x_2, \ldots,x_n)+t \ee_1+lu_{\theta} \; : \;
|x_{1}-t|<\tfrac{\pi}{2\cos(\theta)} ,\  (x_{2},\ldots,x_{n})\in \mathbb{R}^{n-1}
\right\}}.
\end{equation*}
By our assumptions about $M$, if $\delta$ is sufficiently large then $M^{+}_{1}(\delta):=M\cap\mathcal{Z}^{+}_{1,\delta}$ is sufficiently close to $\mathcal{H}_{1}$. From this we can conclude that there exist sufficiently large $t_{0},l_{0}\in \mathbb{R}$ so that $\mathcal{G}^{t_{0},l_{0}}$ do not intersect $M_{1}^{+}(\delta)$ (see Figure \ref{figure 2}). In fact, we can choose $t_{0}$ so that $\partial M_{1}^{+}(\delta)\cap S_{t_{0}}=\varnothing,$ where $\displaystyle{S_{t_{0}}=\left(t_{0}-\tfrac{\pi}{2\cos(\theta)},t_{0}+\tfrac{\pi}{2\cos(\theta)}\right)\times \mathbb{R}^{n}}.$ Since $\mathcal{H}_{1}$ is not parallel to $\ee_{n+1}$ if we translate $\mathcal{G}^{t_{0},l_{0}}$ at direction of $-u_{\theta}$ we conclude that there exists a first $l_{1}$ such that either $\mathcal{G}^{t_{0},l_{0}-l_{1}}$ and $M_{1}^{+}(\delta)$ have a point of contact or $\dist\left(\mathcal{G}^{t_{0},l_{0}-l_{1}},M_{1}^{+}(\delta)\right)=0.$
\begin{figure}[htpb]
\begin{center}
\includegraphics[width=.48\textwidth]{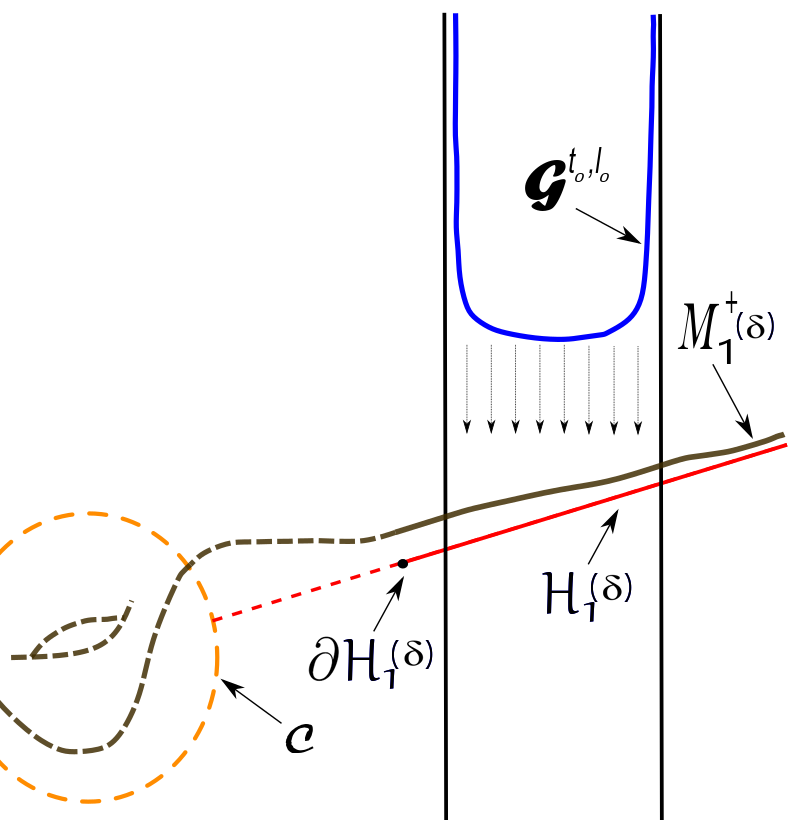} 
\end{center}
\caption{Transversal section of the behaviour of $M^{+}_{1}(\delta)$ and $\mathcal{G}^{t_{0},l_{0}}$.} \label{figure 2}
\end{figure}

According to Theorem \ref{Strong Maximum Principle} the first case cannot be possible because of our assumptions on $M.$ On the other hand, the second case implies that there exists a sequence $\left\{p_{i}=(p_{i}^{1},\ldots,p_{i}^{n+1})\right\}$ in $M_{1}^{+}(\delta)$ such that: 
\begin{itemize}
\item[a.]  The sequence $\left\{\langle p_{i}^{n+1},u_{\theta}\rangle\right\}$ is bounded in $\R$;
\item[b.] $\textstyle{\lim_{i}\dist\left(\mathcal{G}^{t_{0},l_{0}-l_{1}},p_{i}\right)=0}.$
\end{itemize}
Note that the sequence $\left\{p_{i}^{1}\right\}$ is bounded (by the asymptotic behaviour of $\mathcal{G}^{t_{0},l_{0}-l_{1}}$). Thus, up to a subsequence, we can suppose $\{p_{i}^{1}\}\to p_{\infty}^{1}$ and $\{\langle p_{i},u_{\theta}\rangle\}\to p_{\infty}^{u_{\theta}}$. Let $\displaystyle{\left\{M_{i}:=M-(0,p_{i}^2,\cdots,p_{i}^{n+1})+\langle p_{i},u_{\theta}\rangle u_{\theta}\right\}}$ be a sequence of hypersurfaces in $\mathbb{R}^{n+1}.$ By Lemma \ref{Compactness Lemma}, we can suppose that $M_{i}\rightharpoonup M_{\infty}$, where $M_{\infty}$ is a connected stationary integral varifold. We claim that $\displaystyle{p_{\infty}=p_{\infty}^{1}\ee_{1}+p^{u_{\theta}}_{\infty}u_{\theta}\in \spt M_{\infty}}$. Indeed, take any small open ball $\B_{r}(p_{\infty})$ of radius $r$ around $p_{\infty}$ in $\mathbb{R}^{n+1}$ and let $p_{i}^{*}=p_{i}^{1}\ee_{1}+\langle p_{i},u_{\theta}\rangle u_{\theta}.$ Since $\{p_{i}^{*}\}\to p_{\infty}$, we obtain that there is an $i_{0}$ such that $\dist_{g}\left(p_{i}^{*},p_{\infty}\right)<\frac{r}{4}.$ Now fix any $\epsilon\in \left(0,\frac{r}{8}\right).$ Using the definition of limit of variolfd (see \cite{ALLARD}, Section 2.6), we have 
\begin{equation*}
\lim_{i}\mu_{M_{i}}(\B_{r}(p_{\infty}))=\mu_{M_{\infty}}(\B_{r}(p_{\infty})),
\end{equation*}
where $\mu_{M_{i}}$ is the Radon (Riemannian) measure in $\mathbb{R}^{n+1}$ associated to $M_{i}$. On the other hand, if $i>i_{0}$ we have $\displaystyle{\mu_{M_{i}}(\B_{r}(p_{\infty}))\geq \mu_{M_{i}}(\B_{\epsilon}(p_{i}^{*}))\geq c(n,\epsilon)}$ by the monotonicity formula for varifold, where $c(n,\epsilon)$ is a positive constant that depends only on $n$ and $\epsilon,$ because each $M_{i}$ is a smooth hypersurface. In particular, we have
\begin{equation*}
\mu_{M_{\infty}}(\B_{r}(p_{\infty}))\geq c(n,\epsilon)>0,
\end{equation*}
that is, $p_{\infty} \in \spt M_{\infty}$, and follows that $\spt M_{\infty}$ and $\mathcal{G}^{t_{0},l_{0}-l_{1}}$ have a point of contact at $p_{\infty}.$  Therefore, by Theorem \ref{Strong Maximum Principle} we must have $M_{\infty}=\mathcal{G}^{t_{0},l_{0}-l_{1}}.$ But this is clearly impossible by our assumption about $w_{1}$ and Theorem \ref{Allard's Regularity Theorem}. Analogously, we can conclude that $\cos \alpha$ cannot be negative and that $\mathcal{H}_{2}$ is parallel to $\ee_{n+1}.$

Finally, if $\mathcal{H}_{1}$ and $\mathcal{H}_{2}$ are part of the same hyperplane $\Pi$, which we suppose to be $[\ee_{1}]^{\perp}$, then we claim that the first coordinate must be constant on $M$. In fact, suppose to the contrary that this is true. In this case, the first coordinate $x_{1}$ takes a extreme value either at point in $M$ or along of a sequence $\left\{p_{i}=(p_{i}^{1},\ldots,p_{i}^{n+1})\right\}$ such that $\{\langle p_{i},u_{\theta}\rangle\}\to p_{\infty}^{u_{\theta}}$. From Theorem \ref{Strong Maximum Principle} the first case is impossible. Regarding the second case, suppose that $\{x_{1}(p_i)\}\to \sup_{M} x_1(>0)$ and  let us denote $\displaystyle{\Pi_{1}:=\sup_M x_{1}\ee_{1}+\operatorname{span}[\ee_{2},\ldots, \ee_{n}, u_{\theta}]}$. Again, if we consider the sequence 
\begin{equation*}
\left\{M_{i}:=M-(0,p_{i}^{2},\ldots,p_{i}^{n+1})+\langle p_{i},u_{\theta}\rangle u_{\theta}\right\},
\end{equation*}
by Lemma \ref{Compactness Lemma}, a subsequence converges to $M_{\infty}$, where $M_{\infty}$ is a connected stationary integral varifold, thus (reasoning as above) we have a interior point of contact between $\spt M_{\infty}$ and $\Pi_{1}$. So, by Theorem \ref{Strong Maximum Principle} we conclude that $M_{\infty}=\Pi_{1},$ which is impossible. This shows that the first coordinate $x_{1}$ is constant.  Therefore $M$ must be the hyperplane $\Pi.$ 
\end{proof}

Let us finish this section with another application of Lemma \ref{Compactness Lemma}. 

\begin{Lemma}\label{Maximum Principle for Unbounded Domain}
Let $M$ be a hypersurface satisfying the previous conditions and assume that the half-hyperplanes $\mathcal{H}_{1}$ and $\mathcal{H}_{2}$ are distinct. Consider a domain $\Sigma$ of $M$ (not necessarily compact) with non-empty boundary $\partial\Sigma$ such that the function $x\mapsto\langle x,u_{\theta}\rangle$ of $\Sigma$ is bounded. Then the supremum and the infimum of the $x_{1}-$coordinate function of $\Sigma$ are reached along the boundary of $\Sigma$ i.e., there exists no sequence $\left\{p_{i}\right\}$ in the interior of $\Sigma$ such that $\displaystyle{\lim_{i\rightarrow\infty}\dist\left(p_{i},\partial\Sigma\right)>0}$ and either $\displaystyle{\lim_{i\rightarrow\infty}x_{1}(p_{i})=\sup_{\Sigma}x_{1}}$ or $\displaystyle{\lim_{i\rightarrow\infty}x_{1}(p_{i})=\inf_{\Sigma}x_{1}}$.
\end{Lemma}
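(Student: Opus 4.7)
The plan is to argue by contradiction in the spirit of the last paragraph in the proof of Lemma~\ref{Characterization of the Hyperplane}. Suppose, toward a contradiction, that there is a sequence $\{p_i\}\subset\operatorname{int}(\Sigma)$ with $\liminf_i \dist(p_i,\partial\Sigma)>0$ and $x_1(p_i)\to\sup_\Sigma x_1$; the infimum case is symmetric.

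First I pin down the geometry of the wings. By Lemma~\ref{Characterization of the Hyperplane}, both $\mathcal{H}_k$ are parallel to $\ee_{n+1}$, while their $(n-1)$-dimensional boundaries lie in the cylinder $\mathcal{C}$, whose $(n-1)$-dimensional axis is $[\ee_1,u_\theta]^\perp$ and whose cross-section is bounded. A dimension count forces $\partial\mathcal{H}_k$ to be an affine translate of this axis, so each $\mathcal{H}_k$ is a half of the vertical hyperplane $\{x_1=c_k\}$ with $|c_k|\le r$ and inward normal $w_k=\pm u_\theta$. If $c_1=c_2$, then $\mathcal{H}_1$ and $\mathcal{H}_2$ sit in a common hyperplane, Lemma~\ref{Characterization of the Hyperplane} gives $M=\{x_1=c_1\}$, so $x_1$ is constant on $\Sigma$ and the conclusion is trivial. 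Hence I assume $c_1\ne c_2$; in particular $s:=\sup_\Sigma x_1$ is finite, and after passing to a subsequence $x_1(p_i)\to s$ and $\langle p_i,u_\theta\rangle\to p_\infty^{u_\theta}$; set $p_\infty:=s\,\ee_1+p_\infty^{u_\theta}\,u_\theta$.

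Next I apply the Dynamics Lemma. With $c_i:=-(0,p_i^2,\ldots,p_i^{n+1})+\langle p_i,u_\theta\rangle u_\theta\in[\ee_1,u_\theta]^\perp$, a direct calculation gives $p_i+c_i=p_i^1\ee_1+\langle p_i,u_\theta\rangle u_\theta\to p_\infty$. By Lemma~\ref{Compactness Lemma} the translates $M_i:=M+c_i$ admit a subsequential weak limit $M_\infty$, a connected stationary integral varifold that is smooth outside $\mathcal{C}$ and away from a singular set of Hausdorff dimension $\le n-7$. The monotonicity-formula argument used in the proof of Lemma~\ref{Characterization of the Hyperplane} yields $p_\infty\in\spt M_\infty$. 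Moreover, since $\dist(p_i,\partial\Sigma)\ge d>0$ for $i$ large, a fixed small ball of $M$ around $p_i$ is contained in $\Sigma$ and hence in $\{x_1\le s\}$; translating and passing to the limit gives $\spt M_\infty\cap B_\epsilon(p_\infty)\subset\{x_1\le s\}$ for some $\epsilon>0$.

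To close the argument, note that $\Pi_1:=\{x_1=s\}$ is parallel to $\ee_{n+1}$, so $\mathbf{H}=\ee_{n+1}^\perp=0$; thus $\Pi_1$ is minimal in Ilmanen's metric and $\{x_1\le s\}$ is mean convex. Applying Theorem~\ref{Strong Maximum Principle} at $p_\infty$ and extending the local contact via the smooth convergence outside $\mathcal{C}$ from Lemma~\ref{Compactness Lemma} together with unique continuation of minimal hypersurfaces, I obtain $\spt M_\infty\supseteq\Pi_1$. On the other hand each $c_i\in[\ee_1,u_\theta]^\perp$ preserves $\mathcal{H}_1$ and $\mathcal{H}_2$ (both wings are invariant under translations along the axis of $\mathcal{C}$), so every $M_i$ is $C^1$-asymptotic outside $\mathcal{C}$ to the same pair $\mathcal{H}_1\cup\mathcal{H}_2$, forcing $\spt M_\infty\setminus\mathcal{C}\subseteq\overline{\mathcal{H}_1}\cup\overline{\mathcal{H}_2}\subseteq\{x_1=c_1\}\cup\{x_1=c_2\}$. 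Consequently $\Pi_1\setminus\mathcal{C}$ must lie in $\{x_1=c_1\}\cup\{x_1=c_2\}$: if $s\notin\{c_1,c_2\}$ the nonempty set $\Pi_1\setminus\mathcal{C}$ would have to be empty, while if $s=c_1$ then $\Pi_1\setminus\mathcal{C}$ has two connected components only one of which lies in $\overline{\mathcal{H}_1}$, so $\spt M_\infty$ contains a point outside $\overline{\mathcal{H}_1\cup\mathcal{H}_2}$---both scenarios yield a contradiction. The main obstacle I anticipate is the propagation step: upgrading the local touching given by Theorem~\ref{Strong Maximum Principle} to the global containment $\spt M_\infty\supseteq\Pi_1$ requires combining the smooth convergence of Lemma~\ref{Compactness Lemma} with unique continuation for minimal hypersurfaces and verifying that the codimension-$7$ singular set of $M_\infty$ does not obstruct the propagation.
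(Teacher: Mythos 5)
Your overall strategy is the one the paper intends: the paper's own proof is only a pointer to \cite{MPGSHS15} together with Lemma \ref{Compactness Lemma} and Theorem \ref{Strong Maximum Principle}, and your argument fleshes out exactly that pattern, following the last paragraph of the proof of Lemma \ref{Characterization of the Hyperplane}. However, there is a genuine gap at the step where you claim $\spt M_{\infty}\cap B_{\epsilon}(p_{\infty})\subset\{x_{1}\leq s\}$. What you actually establish is that the translates of the \emph{subdomain} $\Sigma$ stay in $\{x_{1}\leq s\}$; but the varifold $M_{\infty}$ produced by Lemma \ref{Compactness Lemma} is the limit of the \emph{whole} translated hypersurfaces $M_{i}$, and nothing prevents points of $M\setminus\Sigma$ from accumulating at $p_{\infty}$ from the side $\{x_{1}>s\}$: the hypothesis $\lim_i\dist(p_{i},\partial\Sigma)>0$ keeps $\partial\Sigma$ away from $p_{i}$, but it does not keep other sheets of $M$ from passing extrinsically close to $p_{i}$. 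Indeed $M$ itself typically has points with $x_{1}>s$ (whenever $s<\max(c_{1},c_{2})$ the wing asymptotic to $\{x_{1}=\max(c_{1},c_{2})\}$ does), so $\spt M_{\infty}\subset\{x_{1}\leq s\}$ certainly fails globally, and Theorem \ref{Strong Maximum Principle} cannot be applied to $M_{\infty}$ with $N=\{x_{1}\leq s\}$ --- neither globally (which is what would directly give $\spt M_{\infty}\supseteq\Pi_{1}$) nor, without further justification, locally. The correct object to which to apply Solomon--White is the varifold limit $V$ of $(\Sigma+c_{i})\llcorner B_{\rho}(p_{\infty})$ for $\rho$ smaller than $\liminf_i\dist(p_{i},\partial\Sigma)$; this is precisely where that hypothesis is used, namely to guarantee that $\partial\Sigma+c_{i}$ stays out of $B_{\rho}(p_{\infty})$, so that $V$ is stationary there, lies on one side of $\Pi_1$, and has $p_{\infty}$ in its support by the monotonicity formula.

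The second weak point is the one you flag yourself: upgrading the local contact to $\spt M_{\infty}\supseteq\Pi_{1}$. Unique continuation for minimal hypersurfaces is not available here, because $p_{\infty}$ may lie inside the cylinder $\mathcal{C}$, where $M_{\infty}$ is only a stationary integral varifold (no regularity is claimed there for any $n$), so there is no smooth hypersurface to continue, and the smooth convergence of Lemma \ref{Compactness Lemma} holds only outside $\mathcal{C}$. The localized Solomon--White application yields only a disc $B_{\rho}(p_{\infty})\cap\Pi_{1}\subset\spt V$, whereas your final contradiction (comparing $\Pi_{1}$ with the asymptotic hyperplanes $\{x_{1}=c_{1}\}$ and $\{x_{1}=c_{2}\}$ far from $\mathcal{C}$) needs an unbounded portion of $\Pi_1$; some additional argument --- iterating the touching argument along $\Pi_1$, or reformulating the contradiction so that only the local disc is needed --- is required to close this. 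The remaining ingredients of the proposal (the identification of the $\mathcal{H}_{k}$ as halves of $\{x_{1}=c_{k}\}$ with $w_{k}=\pm u_{\theta}$, the choice of $c_{i}\in[\ee_{1},u_{\theta}]^{\perp}$, the monotonicity argument for $p_{\infty}\in\spt M_{\infty}$, and the concluding case analysis) are sound and consistent with the paper.
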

\begin{proof}
The proof of this Lemma follows the same ideas as in \cite{MPGSHS15}. The only difference is that in the proof we should use the Lemma \ref{Compactness Lemma} and Theorem \ref{Strong Maximum Principle} to conclude.
\end{proof}

\section{The proof of the main theorems} 

\subsection{The case $\theta \in [0, \pi/2)$}
This section is devoted to demonstrating the main theorem for $\theta \in [0, \pi/2)$. As in the previous section, we fix $\textstyle{\theta \in [0, \pi/2)}$ and define $\displaystyle{u_{\theta}=-\sin (\theta) \cdot  \ee_n+\cos(\theta) \cdot \ee_{n+1}}.$
\begin{Theorem} \label{th:41}
Let $f:M\longrightarrow \mathbb{R}^{n+1}$ be a complete, connected, properly embedded translating soliton and consider $\textstyle{\mathcal{C}:=\{x\in \mathbb{R}^{n+1} \; : \; \langle x,\ee_{1}\rangle^2+\langle u_{\theta},x\rangle^2 \leq r^2\}},$ where $r>0.$ Assume that $M$ is $C^{1}$-asymptotic  to two half-hyperplanes outside $\mathcal{C}$. Then we have one, and only one, of these two possibilities:\
\begin{enumerate}[(a)]
\item Both half-hyperplanes are contained in the same hyperplane $\Pi$ parallel to $\ee_{n+1}$ and $M$ coincides with $\Pi$;
\item Both half-hyperplanes are included in different parallel hyperplanes and $M$ coincides with a tilted grim reaper cylinder.
\end{enumerate}
\end{Theorem}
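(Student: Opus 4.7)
Lemma~\ref{Characterization of the Hyperplane} already handles case (a): if both half-hyperplanes lie in the same hyperplane $\Pi$, then $M=\Pi$. So my plan focuses on case (b), where $\mathcal{H}_1$ and $\mathcal{H}_2$ are distinct. Because each $\mathcal{H}_k$ is parallel to $\ee_{n+1}$ and must contain the $(n-1)$-dimensional axis $[\ee_1,u_\theta]^\perp$ of $\mathcal{C}$, both hyperplanes are forced to be of the form $\{x_1=a_k\}$; relabel so that $a_1<a_2$. My first step is to confine $M$ to the slab $\{a_1\le x_1\le a_2\}$. If $\sup_M x_1>a_2$, I would pick $c\in(a_2,\sup_M x_1)$ so close to $a_2$ that, by $C^1$-asymptoticity of $M\setminus \mathcal{C}$ to $\mathcal{H}_1\cup\mathcal{H}_2$, the open domain $\Sigma:=M\cap\{x_1>c\}$ lies inside $\mathcal{C}$. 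Then $\langle\cdot,u_\theta\rangle$ is bounded on $\Sigma$, and Lemma~\ref{Maximum Principle for Unbounded Domain} forces $\sup_\Sigma x_1=c$, a contradiction; the symmetric argument gives $\inf_M x_1\ge a_1$.

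Next I would apply the Dynamics Lemma~\ref{Compactness Lemma} to the sequence $b_i=iE_n$, where $E_n=\cos\theta\,\ee_n+\sin\theta\,\ee_{n+1}$ is a unit vector along the cylinder axis. A subsequence satisfies $M+b_i\rightharpoonup M_\infty$, a connected stationary integral varifold, smooth outside $\mathcal{C}$ and away from a singular set of Hausdorff dimension at most $n-7$. The crux is to upgrade this to the statement that $M_\infty$ is invariant under every translation by $c\in[\ee_1,u_\theta]^\perp$. On one hand, $M+b_i+c\rightharpoonup M_\infty+c$ by continuity of varifold translation. On the other hand, Remark~\ref{re:20} applied to $M+(b_i+c)$ gives a subsequential limit which, using the identity $(M+b_i)+c=M+(b_i+c)$ and a diagonal argument, can be arranged to coincide with $M_\infty$. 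By density and continuity this extends to all $c\in[\ee_1,u_\theta]^\perp$, so $M_\infty=\gamma\times[\ee_1,u_\theta]^\perp$ for some connected stationary integral 1-varifold $\gamma$ in the 2-plane $\mathrm{span}(\ee_1,u_\theta)$. By Theorem~\ref{Ilmanen} applied in that plane, $\gamma$ is a planar translator with velocity equal to the projection of $\ee_{n+1}$, namely $\cos\theta\cdot u_\theta$; the only complete connected such curves are straight lines parallel to $u_\theta$ and rescaled grim reaper curves of width $\pi/\cos\theta$.

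Since $M_\infty$ inherits from $M$ the two \emph{distinct} asymptotic half-hyperplanes (which are invariant under $[\ee_1,u_\theta]^\perp$-translations and so persist in the smooth limit outside $\mathcal{C}$), the line option is excluded, forcing $\gamma$ to be the grim reaper. Consequently $a_2-a_1=\pi/\cos\theta$ and $M_\infty$ is a tilted grim reaper cylinder $\widetilde F$ occupying the slab $[a_1,a_2]$, with $u_\theta$-position prescribed by the asymptotic data of $M$. To finish, I would prove $M=\widetilde F$ by sliding the one-parameter family $\widetilde F+l\,u_\theta$ downward from $l=+\infty$: for $l$ sufficiently large the translate is disjoint from $M$ (inside $\mathcal{C}$ the $u_\theta$-coordinate of $M$ is bounded, while the translate has $u_\theta$-coordinate $\ge l$). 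Decrease $l$ to the first value $l^*$ at which contact occurs. An interior contact, combined with Theorem~\ref{Interior Maximum Principle}, immediately gives $M=\widetilde F+l^*u_\theta$. A contact only at infinity is ruled out by repeating the translation-plus-Lemma~\ref{Compactness Lemma} argument of Lemma~\ref{Characterization of the Hyperplane}: one translates the approaching contact points into a compact region, applies the Dynamics Lemma, and invokes Theorem~\ref{Strong Maximum Principle} to force the resulting varifold limit to contain $\widetilde F+l^*u_\theta$, from which coincidence $M=\widetilde F+l^*u_\theta$ follows.

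The main obstacle is establishing the $[\ee_1,u_\theta]^\perp$-translation invariance of $M_\infty$ in the second step: Lemma~\ref{Compactness Lemma} and Remark~\ref{re:20} yield subsequential limits but not invariance of a single limit, so a careful diagonal construction exploiting $(M+b_i)+c=M+(b_i+c)$ and the continuity of varifold translation is required. Once that invariance is secured, the reduction to the 2-dimensional translator classification and the final sliding-barrier argument proceed along the same template already used in Lemmas~\ref{Characterization of the Hyperplane} and~\ref{Maximum Principle for Unbounded Domain}.
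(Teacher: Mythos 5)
Your reduction of case (a) to Lemma \ref{Characterization of the Hyperplane} and your slab confinement via Lemma \ref{Maximum Principle for Unbounded Domain} match the paper. But the core of case (b) has two genuine gaps.

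First, the translation-invariance of $M_\infty$ does not follow from the compactness machinery. From $M+b_i\rightharpoonup M_\infty$ you get $M+b_i+c\rightharpoonup M_\infty+c$ along the same subsequence, and Remark \ref{re:20} only gives that some subsequence of translates of $M_\infty$ converges to a varifold with the same qualitative properties; neither statement forces $M_\infty+c=M_\infty$. A limit of translates of a hypersurface is in general not translation-invariant (think of anything periodic in the $E_n$-direction), and no diagonal argument can manufacture invariance out of subsequential convergence alone. Even granting invariance, a stationary integral $1$-varifold $\gamma$ need not be a smooth curve inside $\mathcal{C}$ (junction-type singular behaviour is not excluded there by the Dynamics Lemma), so the dichotomy ``line or grim reaper curve'' is not available.

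Second, and more fatally, the final sliding step cannot close the argument. If the first contact of $\widetilde F+l\,u_\theta$ with $M$ occurs only at infinity, translating the near-contact points into a compact region and applying Lemma \ref{Compactness Lemma} together with Theorem \ref{Strong Maximum Principle} yields that a \emph{limit of translates} of $M$ equals $\widetilde F+l^{*}u_\theta$ --- not that $M$ does. This is exactly the situation of Lemma \ref{lem:inside}, whose conclusion is only that $M$ lies on one side of a grim reaper cylinder; that conclusion is perfectly consistent with $M$ being asymptotic to, but distinct from, the grim reaper. The paper therefore takes a much longer route to the identity $M=\mathcal{G}^{0,0}$: it shows the wings are graphs (Lemma \ref{me}), that $M$ is globally a vertical graph (via the Rad\'o-type argument), upgrades the asymptotics to $C^{2}$ (Lemma \ref{lem:grimreaper}), proves that the quotients $h_j=\xi_j/H$ tend to zero at the ends (Lemma \ref{h}), applies Hopf's maximum principle to equation \eqref{tercera} to get $\xi_j\equiv 0$ and hence $|A|^{2}=H^{2}$, and only then invokes the rigidity theorem of \cite{MSHS15}. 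Your proposal contains no substitute for this last chain, and the barrier arguments alone cannot supply one.
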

We will divide the proof of the theorem in many lemmas. But, before we start with the proof, we have to introduce some notation that we will use throughout the whole section.

Following \cite{MSHS15} and \cite{MPGSHS15}, let us define the foliation of $\mathbb{R}^{n+1}$ given by
\begin{equation}\label{3}
    \Pi(t)=\left\{x\in \mathbb{R}^{n+1}\; : \;\langle x,\ee_{1}\rangle=t\right\}.
\end{equation}
Furthermore, given $A \subset \mathbb{R}^{n+1}$ and $t \in \mathbb{R}$, let us define the sets
\begin{equation*}
 A_{+}(t)=\left\{x\in A\; : \;\langle x,\ee_{1}\rangle\geq t\right\}, A_{-}(t)=\left\{x\in A\; : \;\langle x,\ee_{1}\rangle\leq t\right\}
\end{equation*}   
\begin{equation*}
A^{+}(t)=\left\{x\in A\; : \;\langle x,u_{\theta}\rangle\geq t\right\}, A^{-}(t)=\left\{x\in A\; : \;\langle x,u_{\theta}\rangle\leq t\right\}.
\end{equation*}

Recall that we are assuming that the translating  velocity is $\ee_{n+1}$. From Lemma \ref{Characterization of the Hyperplane}, we work only in the case when the half-hyperplanes $\mathcal{H}_{1}$ and $\mathcal{H}_{2}$ lie in different and parallel hyperplanes to $\ee_{n+1}$. So, up to a translation, we can assume that the half-hyperplanes are contained in $\Pi\left(-\delta\right)$ and $\Pi\left(\delta\right)$, for a certain $\delta>0$. 
Let us begin by proving that both half-hyperplanes are parallel to $u_{\theta}$. 

\begin{Lemma}
The two connected components of $M$ which lie outside the cylinder $\mathcal{C}$ point in the same direction of $u_{\theta}.$
\end{Lemma}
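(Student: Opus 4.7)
The plan is to derive a contradiction by iterating the Dynamics Lemma to reduce the question to a classification problem for one-dimensional translators in the plane.

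A preliminary observation narrows down the possible inward normals: each $\partial \mathcal{H}_j$ is an $(n-1)$-dimensional affine subspace contained in the solid cylinder $\mathcal{C}$, so its direction subspace has bounded $\langle u_\theta, \cdot\rangle$-projection and must equal $[\ee_1, u_\theta]^\perp = \operatorname{span}\{\ee_2, \ldots, \ee_{n-1}, E_n\}$. The unit vector $w_j$ in $\Pi(\mp\delta)$ orthogonal to $\partial\mathcal{H}_j$ is then forced to be $\pm u_\theta$. The statement of the lemma is thus equivalent to $w_1 = w_2$, and I argue by contradiction assuming $w_1 = +u_\theta$ and $w_2 = -u_\theta$.

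Next I would apply Lemma \ref{Compactness Lemma} together with Remark \ref{re:20} iteratively. Picking a basis $\{v_1, \ldots, v_{n-1}\}$ of $[\ee_1, u_\theta]^\perp$, a first application with $b_i = i v_1$ yields a connected stationary integral varifold $M_\infty^{(1)}$ invariant under $v_1$-translations; successive applications (justified by Remark \ref{re:20}) with $i v_2, \ldots, i v_{n-1}$, together with a diagonal subsequence, produce a connected stationary integral varifold $M_\infty$ invariant under all translations by vectors in $[\ee_1, u_\theta]^\perp$, since each $v_k$-invariance is preserved under weak limits. Because $\mathcal{H}_1$ and $\mathcal{H}_2$ are themselves invariant under $[\ee_1, u_\theta]^\perp$-translations, the smooth-convergence clause of Lemma \ref{Compactness Lemma} keeps $M_\infty$ $C^1$-asymptotic to $\mathcal{H}_1$ and $\mathcal{H}_2$ outside $\mathcal{C}$ with the assumed opposite $u_\theta$-orientations. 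The full invariance then forces $M_\infty = \gamma \times [\ee_1, u_\theta]^\perp$ for a connected one-dimensional translator curve $\gamma$ (with velocity $\cos\theta \cdot u_\theta$) in the 2-plane $\operatorname{span}\{\ee_1, u_\theta\}$; the $(n-7)$-dimensional bound on the singular set of $M_\infty$ from Theorem \ref{Strong Compactness Theorem} rules out singularities of $\gamma$.

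The classical ODE classification of 1-dimensional translators in the plane (solutions of $\kappa = -\langle V, \nu\rangle$ for a fixed velocity $V$) gives, up to isometries, only straight lines parallel to $V$ and grim-reaper curves. But the inherited asymptotic behavior forces one end of $\gamma$ to approach $\{x_1 = -\delta\}$ as $\langle u_\theta, \cdot\rangle \to +\infty$ and the other end to approach $\{x_1 = +\delta\}$ as $\langle u_\theta, \cdot\rangle \to -\infty$; a line parallel to $u_\theta$ has both ends at the same $x_1$-value, while a grim-reaper curve has both ends pointing in the same $u_\theta$-direction, so neither realizes this configuration. This gives the desired contradiction. The main technical obstacle is the diagonal extraction used to compound the invariances while preserving both connectedness and the $C^1$-asymptotic structure at each step; this in turn rests on the smooth-convergence statement in Lemma \ref{Compactness Lemma} and the $[\ee_1, u_\theta]^\perp$-invariance of $\mathcal{H}_1, \mathcal{H}_2$.
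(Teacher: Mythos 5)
Your reduction of the possible inward normals to $w_j=\pm u_\theta$ is correct, but the core of your argument --- that translating $M$ by $i\,v_1$ and passing to a weak limit produces a varifold \emph{invariant} under $v_1$-translations --- is unjustified and in general false. A subsequential weak limit of $\{M+i\,v_1\}$ is merely some element of the translational closure of $M$; nothing forces it to be preserved by the one-parameter group $s\mapsto(\cdot)+s\,v_1$. (Even if the full sequence converged you would only obtain invariance under the single discrete translation by $v_1$, i.e.\ periodicity, and passing to a subsequence destroys even that.) Lemma \ref{Compactness Lemma} yields connectedness, stationarity and partial regularity of the limit, not symmetry. Without genuine invariance under all of $[\ee_1,u_\theta]^\perp$ you cannot write $M_\infty=\gamma\times[\ee_1,u_\theta]^\perp$, so the reduction to the ODE classification of planar translators collapses. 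A secondary problem with the same step: even granting the product structure, the regularity of $\gamma$ inside the cylinder is not controlled, since the $(n-7)$-dimensional bound on the singular set in Lemma \ref{Compactness Lemma} holds only \emph{outside} $\mathcal{C}$, where the wings are stable graphs; inside $\mathcal{C}$ the limit is merely a stationary integral varifold.

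You have also only treated the mixed case $w_1=+u_\theta$, $w_2=-u_\theta$. The lemma, as it is used later (e.g.\ in Lemma \ref{slablimitation} and Lemma \ref{me}), requires both wings to point in the $+u_\theta$ direction, so the case $w_1=w_2=-u_\theta$ must be excluded as well; the paper does this with a separate barrier argument, sliding a tilted grim reaper cylinder transverse to $\Pi(0)$ down in the $-u_\theta$ direction onto $M$. For the mixed case the paper's mechanism is different from yours and avoids any invariance claim: one places a tilted grim reaper cylinder $\mathcal{G}^{\frac{\pi}{2\cos\theta}+\delta+t,\,l}$ entirely to one side of $M$, slides it towards $M$, and uses Theorem \ref{Strong Maximum Principle} to forbid a first interior contact; if instead the distance tends to $0$ only along a sequence escaping in the $[\ee_1,u_\theta]^\perp$ directions, one applies the Dynamics Lemma to the recentred translates of $M$, obtaining a connected limit varifold that touches the fixed grim reaper and hence equals it by the maximum principle, contradicting Allard's regularity theorem and the asymptotics of $M$. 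The Dynamics Lemma is thus used only to convert ``contact at infinity'' into an honest point of contact, which is what it is designed for; I would suggest reworking your argument along these lines.
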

\begin{proof}First of all, notice that  $M$ cannot be asymptotic to the half-hyperplanes
\begin{equation*}
\mathcal{H}_{1}=\left\{x\in \mathbb{R}^{n+1}\; : \; \langle x, u_{\theta}\rangle<r_{1}<0,\ x_{1}=-\delta\right\}
\end{equation*}
and
\begin{equation*}
\mathcal{H}_{2}=\left\{x\in \mathbb{R}^{n+1}\; : \; \langle x, u_{\theta}\rangle<r_{2}<0,\ x_{1}=\delta\right\}.
\end{equation*}
This is a trivial consequence of Theorem \ref{Strong Maximum Principle}, when one compares $M$ with a suitable copy of a tilted grim reaper transverse to the hyperplane $\Pi(0)$ (as we did at the end of the proof of Lemma \ref{Compactness Lemma}).

For the remaining cases, we proceed again by contradiction. Suppose at first that 
\begin{equation*}
\mathcal{H}_{1}=\left\{x\in \mathbb{R}^{n+1}\; : \; \langle x, u_{\theta}\rangle>r_{1}>0,\ x_{1}=-\delta\right\}
\end{equation*}
and
\begin{equation*}
\mathcal{H}_{2}=\left\{x\in \mathbb{R}^{n+1}\; : \; \langle x, u_{\theta}\rangle<r_{2}<0,\ x_{1}=\delta\right\}
\end{equation*}
for some $r_{1}>0$ and $r_{2}<0.$ Given $t$ and $l$ in $\R$, let $\mathcal{G}^{t,l}$ be the tilted grim reaper cylinder defined by
\begin{equation}\label{4}
\displaystyle{\mathcal{G}^{t,l}:=\left\{ F_\theta(x_1-t,x_2, \ldots,x_n)+t \ee_1+lu_{\theta} \; : \;
|x_{1}-t|<\tfrac{\pi}{2\cos(\theta)} ,\  (x_{2},\ldots,x_{n})\in \mathbb{R}^{n-1}
\right\}}
\end{equation}
\begin{figure}[htpb]
\begin{center}
\includegraphics[width=.60\textwidth]{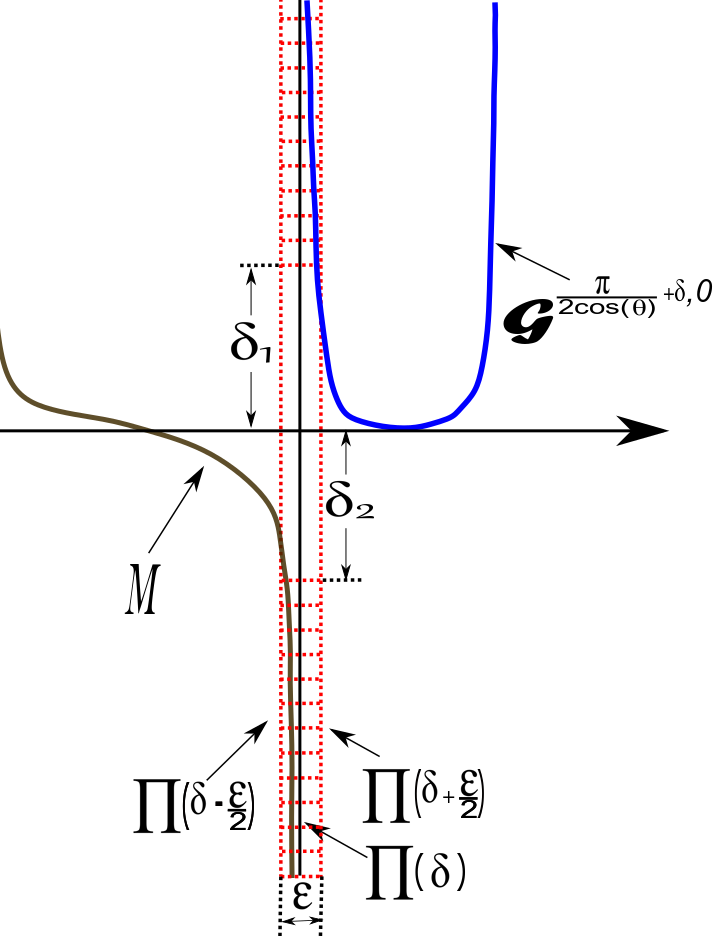}
\end{center}
\caption{Transversal section of the $M^{+}_{1}(\delta)$ and $\mathcal{G}^{\tfrac{\pi}{2\cos(\theta)}+\delta,0}$.} \label{figure 3}
\end{figure}
\noindent Consider $\mathcal{G}^{\tfrac{\pi}{2\cos(\theta)}+\delta,0}$, which lie in $\left(\delta,\delta+\tfrac{\pi}{\cos(\theta)}\right)\times\mathbb{R}^{n}$ (see Figure \ref{figure 3}). Note that it is asymptotic to the half-hyperplanes $\Pi\left(\delta\right)$ and $\Pi\left(\delta+\tfrac{\pi}{\cos(\theta)}\right).$ Fix $\epsilon \in (0,2\delta)$. Using  the fact that $\mathcal{G}^{\tfrac{\pi}{2\cos(\theta)}+\delta,0}$ is asymptotic to the half-hyperplanes outside the cylinder, then there exists $\delta_{1}>r_{1}$, depending only on $\epsilon$, such that\footnote{Here we are using the same notation of Lemma \ref{Compactness Lemma}.}
\begin{equation}\label{5}
\mathcal{G}^{\tfrac{\pi}{2\cos(\theta)}+\delta,0}\bigcap \mathcal{Z}^{+}_{\delta_{1}}\subset \left[\left(\delta,\delta+\tfrac{\epsilon}{2}\right)\times \mathbb{R}^{n}\right]\cap\{ x \in \R^{n+1} \; : \; \langle x, u_\theta\rangle >\delta_{1} \} .
\end{equation}
On the other hand, taking into account the asymptotic behaviour of $M$ and our assumptions about the wings, there exists a $\delta_{2}>-r_{2}$, depending only on $\epsilon$, such that
\begin{equation}\label{6}
M\bigcap \mathcal{Z}^{+}_{\delta_{2}}\subset \left[ \left(\delta-\tfrac{\epsilon}{2},\delta+\tfrac{\epsilon}{2} \right)\times \mathbb{R}^{n} \right]\cap\{ x \in \R^{n+1} \; : \; \langle x, u_\theta\rangle <\delta_{2} \}.
\end{equation}
From \eqref{5} and \eqref{6}, there exists a $t>0$ such that the tilted grim reaper cylinder $ \mathcal{G}^{\tfrac{\pi}{2\cos(\theta)}+\delta+t,-\delta_{1}-\delta_{2}-1}$ satisfies
\begin{equation*}
 \mathcal{G}^{\tfrac{\pi}{2\cos(\theta)}+\delta+t,-\delta_{1}-\delta_{2}-1}\bigcap M=\varnothing
\end{equation*}
Now, since $\epsilon \in (0, 2\delta)$, there is a finite $t_{0}$ such that either $M$ and $\mathcal{G}^{\tfrac{\pi}{2\cos(\theta)}+\delta+t_{0},-\delta_{1}-\delta_{2}-1}$ have a first point of contact or there is a sequence $\left\{p_{i}=(p^{1}_{i},\ldots,p^{n+1}_{i})\right\}$ in $M$ satisfying the next conditions: 
\begin{itemize}
\item[i.]$\{\langle p_{i},u_{\theta}\rangle\}$ is a bounded sequence;
\item[ii.] $\left\{(0,p^{2}_{i},\ldots,p^{n+1}_{i})-\langle p_{i},u_{\theta}\rangle u_{\theta}\right\}$ is an unbounded sequence;
\item[iii.]\begin{equation}\label{7}
    \lim_{i}\left\{\dist\left(p_{i},\mathcal{G}^{\tfrac{\pi}{2\cos(\theta)}+\delta+t_{0},-\delta_{1}-\delta_{2}-1}\right)\right\}=0,
\end{equation}
\end{itemize}
Notice that in this last case the sequence $\left\{p^{1}_{i}\right\}$ is bounded because to the asymptotic behaviour of $M$. Thus we can suppose $\{p^{1}_{i}\}\to p_{\infty}^{1}$ and $\{\langle p_{i},u_{\theta}\rangle\}\to p_{\infty}^{u_{\theta}}.$ In particular, from (\ref{7}), we have \[\textstyle{p_{\infty}^{1}\ee_{1}+p_{\infty}^{u_{\theta}}u_{\theta} \in \mathcal{G}^{\tfrac{\pi}{2\cos(\theta)}+\delta+t_{0},-\delta_{1}-\delta_{2}-1}}.\] According Theorem \ref{Strong Maximum Principle} and the asymptotic behaviour of $M$ the first case cannot happen. Regarding the second case, let us define the sequence \[\left\{M_{i}:=M-(0,p^{2}_{i},\ldots,p^{n+1}_{i})+\langle p_{i},u_{\theta}\rangle u_{\theta}\right\}.\] By Lemma \ref{Compactness Lemma}, up to a subsequence, we have that $M_{i}\rightharpoonup M_{\infty},$ where $M_{\infty}$ is a connected stationary integral varifold. Arguing as in Lemma \ref{Characterization of the Hyperplane}, we have 
\[p_{\infty}^{1}\ee_{1}+p_{\infty}^{u_{\theta}}u_{\theta} \in \spt M_{\infty}\cap \mathcal{G}^{\tfrac{\pi}{2\cos(\theta)}+\delta+t,-\delta_{1}-\delta_{2}-1}.\]
Thus, by Theorem \ref{Strong Maximum Principle} we get \[\textstyle{M_{\infty}=\mathcal{G}^{\tfrac{\pi}{2\cos(\theta)}+\delta+t_{0},-\delta_{1}-\delta_{2}-1}}.\] But this is impossible by the asymptotic behaviour of $M$. 

The case when
\begin{equation*}
\mathcal{H}_{1}=\left\{x\in \mathbb{R}^{n+1}\; : \;\langle x,u_{\theta}\rangle<r_{1}<0,\ x_{1}=-\delta\right\}
\end{equation*}
and
\begin{equation*}
\mathcal{H}_{2}=\left\{x\in \mathbb{R}^{n+1}\; : \; \langle x,u_{\theta}\rangle>r_{2}>0,\ x_{1}=\delta\right\}
\end{equation*}
can be excluded using a symmetric argument. This concludes the proof. 
\end{proof}

We shall see now that our embedded soliton must lie in the slab limited by the hyperplanes $\Pi(-\delta)$ and $\Pi(\delta)$.

\begin{Lemma}\label{slablimitation}
$M$ lies inside the slab $S:=(-\delta, \delta)\times \mathbb{R}^{n}.$
\end{Lemma}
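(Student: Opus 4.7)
The plan is to argue by contradiction, using Lemma \ref{Maximum Principle for Unbounded Domain} to establish $|x_{1}|\leq \delta$ on $M$ and then upgrading to strict inequality via Theorem \ref{Interior Maximum Principle}. First, I would observe that $M\subset\{\langle x, u_{\theta}\rangle\geq -r\}$: inside $\mathcal{C}$ this is forced by the definition of the cylinder, while outside $\mathcal{C}$ the preceding lemma tells us each half-hyperplane has the form $\mathcal{H}_{k}=\{y\in\Pi(\pm\delta)\,:\,\langle y, u_{\theta}\rangle>c_{k}\}$ with $\delta^{2}+c_{k}^{2}\leq r^{2}$ (because $\partial\mathcal{H}_{k}\subset\mathcal{C}$); since each wing $M_{k}$ is a graph over $\mathcal{H}_{k}$ in the direction $\ee_{1}\perp u_{\theta}$, the $u_{\theta}$-coordinate of any point of $M_{k}$ equals that of its base in $\mathcal{H}_{k}$ and hence is $\geq c_{k}\geq -r$.

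Next, suppose for contradiction that $s:=\sup_{M}x_{1}>\delta$ and set $\epsilon:=(s-\delta)/2>0$. Using the $C^{1}$-asymptotic decay of the graph functions $\varphi_{k}$ I would choose $L>r$ so large that on $M\cap\{\langle x, u_{\theta}\rangle\geq L\}$ (which is contained in the two wings) the inequality $|x_{1}\mp\delta|<\epsilon$ holds, so in particular $x_{1}<\delta+\epsilon<s$ there. Let $\Sigma:=M\cap\{\langle x, u_{\theta}\rangle\leq L\}$. By the first step, $\langle x, u_{\theta}\rangle$ takes values in $[-r,L]$ on $\Sigma$, and the boundary $\partial\Sigma=M\cap\{\langle x, u_{\theta}\rangle=L\}$ lies in the wings (since $L>r$) where $x_{1}<\delta+\epsilon$. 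Because $\mathcal{H}_{1}$ and $\mathcal{H}_{2}$ are distinct and $\partial\Sigma\neq\varnothing$, Lemma \ref{Maximum Principle for Unbounded Domain} applies to $\Sigma$ and yields $\sup_{\Sigma}x_{1}\leq\sup_{\partial\Sigma}x_{1}\leq\delta+\epsilon<s$. Combined with the wing estimate on $M\setminus\Sigma$, this contradicts the definition of $s$. Hence $\sup_{M}x_{1}\leq\delta$, and a completely symmetric argument (replacing $\ee_{1}$ by $-\ee_{1}$) yields $\inf_{M}x_{1}\geq-\delta$.

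Finally, to obtain the strict inequalities, assume that $x_{1}$ attains the value $\delta$ at some finite point $p\in M$. Then $M$ and the hyperplane $\Pi(\delta)$ are both translators with velocity $\ee_{n+1}$ (the hyperplane trivially so because it contains $\ee_{n+1}$), sharing the interior point $p$ with a common tangent hyperplane (since $\ee_{1}$ must be normal to $M$ at the interior maximum of $x_{1}$), and $M$ lies entirely in the closed half-space $\{x_{1}\leq\delta\}$. By Theorem \ref{Interior Maximum Principle} one would conclude $M=\Pi(\delta)$, which is absurd since the wing asymptotic to $\Pi(-\delta)$ precludes such an equality. The case $x_{1}=-\delta$ is handled symmetrically, so $M\subset(-\delta,\delta)\times\mathbb{R}^{n}=S$. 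The most delicate point is verifying the hypotheses of Lemma \ref{Maximum Principle for Unbounded Domain} on $\Sigma$, especially the two-sided bound on $\langle x, u_{\theta}\rangle$, which is supplied by the first step; without that a priori lower bound the domain would be genuinely unbounded in the $u_{\theta}$ direction and the maximum principle would fail to apply.
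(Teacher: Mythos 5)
Your proof is correct and follows essentially the same strategy as the paper's: truncate $M$, apply Lemma \ref{Maximum Principle for Unbounded Domain} to push $\sup x_{1}$ to the boundary of the truncated domain, and rule out an actual touching of $\Pi(\pm\delta)$ by a maximum principle. The only (harmless) differences are that you slice by a level set of $\langle\cdot,u_{\theta}\rangle$ where the paper slices by $\{x_{1}\geq\tfrac{\delta+\lambda}{2}\}$ (and accordingly you verify the $u_{\theta}$-bound a priori while the paper verifies it from the asymptotics), and that you invoke the classical Theorem \ref{Interior Maximum Principle} for the touching case where the paper uses Theorem \ref{Strong Maximum Principle}.
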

\begin{proof}Assume that $\lambda:=\sup_{M}x_{1}>\delta$ and consider the domain
\begin{equation*}
    \Sigma=\left\{x\in M\; : \;\langle x,\ee_{1}\rangle\geq\tfrac{\delta+\lambda}{2}\right\}.
\end{equation*}
\begin{figure}[htpb]
\begin{center}
\includegraphics[width=.30\textwidth]{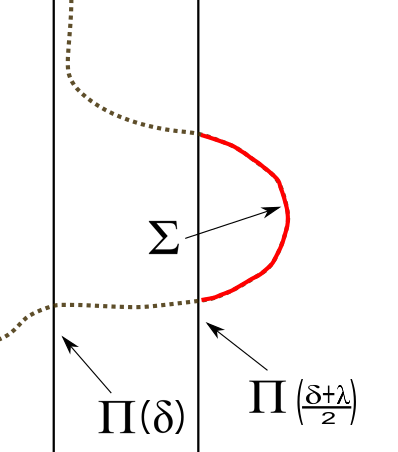} 
\end{center}
\caption{Transversal section of the $\Sigma$.} \label{figure 4}
\end{figure}

The asymptotic behaviour of $M$ tells us that the function $x\mapsto\langle x,u_{\theta}\rangle$ is bounded in $\Sigma$ (see Figure \ref{figure 4}). Now, by Lemma 3.3 we have
\begin{equation}\label{Sigma}
    \lambda=\sup_{\Sigma}x_{1}=\sup_{\partial\Sigma}x_{1}
\end{equation}
Therefore, because $\partial\Sigma\subset\Pi\left(\tfrac{\delta+\lambda}{2}\right)$ it follows that $ x_{1}(p)=\tfrac{\delta+\lambda}{2}$
for all $p\in \Sigma,$ which is absurd. Moreover, if $x_{1}(p)=\delta$ for any $p \in M,$ then $M$ and the hyperplane $\Pi(\delta)$ have  a point of contact. Hence by Theorem \ref{Strong Maximum Principle} we must have $M=\Pi(\delta)$ and again we arrive to a contradiction. This finishes the proof that $x_{1}(p)<\delta$ for all $p \in M.$ Using the same idea we shall obtain that $x_{1}(p)>-\delta$ for all $p \in M.$
\end{proof}

Now, we are going to show that the distance between the two half-hyperplanes is precisely $\frac{\pi}{\cos(\theta)},$ like in the tilted grim reaper cylinder. 

\begin{Lemma} 
We have $2\delta=\frac{\pi}{\cos(\theta)}.$
\end{Lemma}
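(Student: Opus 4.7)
The plan is to argue by contradiction, splitting into the two sub-cases $2\delta>\pi/\cos\theta$ and $2\delta<\pi/\cos\theta$, and to compare $M$ with the foliation of the strip $\{|y_1|<\pi/(2\cos\theta)\}$ by the tilted grim reapers $\{\mathcal{G}^{0,l}\}_{l\in\mathbb{R}}$. The key observation, obtained by solving $y=F_\theta(x)+lu_\theta$ for $l$, is that these are precisely the level sets of
\[
\Psi(y):=\langle u_\theta,y\rangle+\sec\theta\,\log\cos(y_1\cos\theta),
\]
and since every $c\in[\ee_1,u_\theta]^\perp$ satisfies $c_1=0$ and $\langle u_\theta,c\rangle=0$, both $\Psi$ and every $\mathcal{G}^{0,l}$ are invariant under translations by such vectors (the axis direction of each grim reaper cylinder). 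In the \emph{thin slab} case $2\delta<\pi/\cos\theta$, $M$ lies entirely in the strip and $\Psi|_M$ is bounded below (using $\langle u_\theta,\cdot\rangle\geq-r$ on $M\cap\mathcal{C}$ together with the boundedness of the $\log$-term on $\{|y_1|\leq\delta\}$); I take $l^*:=\inf_M\Psi$. In the \emph{fat slab} case $2\delta>\pi/\cos\theta$, the asymptotic decay of the wings forces $\Psi$ to be bounded above on $M\cap\{|y_1|<\pi/(2\cos\theta)\}$; I take $l^*:=\sup\Psi$ over this domain.

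First, I would rule out the possibility that $l^*$ is attained at some $p^*\in M$. If it were, then $M$ and $\mathcal{G}^{0,l^*}$ would be tangent at $p^*$ with $M$ lying locally on one side, and Theorem \ref{Interior Maximum Principle} would force the connected component of $M\cap\{|y_1|<\pi/(2\cos\theta)\}$ through $p^*$ to be contained in $\mathcal{G}^{0,l^*}$. Proper embeddedness of $M$ makes that component closed in the connected set $\mathcal{G}^{0,l^*}$, so the inclusion is an equality and $\mathcal{G}^{0,l^*}\subseteq M$. Because $\mathcal{G}^{0,l^*}$ is a closed embedded $n$-dimensional hypersurface of $\mathbb{R}^{n+1}$, it is clopen in $M$, whence $M=\mathcal{G}^{0,l^*}$ by connectedness. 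This conflicts with the asymptotic data (half-hyperplanes in $\Pi(\pm\delta)$ versus $\Pi(\pm\pi/(2\cos\theta))$), giving a contradiction.

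Next, I would treat the non-attained case by applying the Dynamics Lemma. Choose $p_i\in M$ (in the appropriate domain) with $\Psi(p_i)\to l^*$. Since $(p_i)_1$ and $\Psi(p_i)$ are bounded, so is $\langle u_\theta,p_i\rangle$, and $p_i$ can escape only along directions in $[\ee_1,u_\theta]^\perp$. Letting $c_i$ denote the projection of $p_i$ onto $[\ee_1,u_\theta]^\perp$, Lemma \ref{Compactness Lemma} applied to $M_i:=M-c_i$ produces, up to a subsequence, a connected stationary integral varifold $M_\infty$ as weak limit, smooth outside $\mathcal{C}$ away from a singular set of Hausdorff dimension at most $n-7$. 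The $c_i$-invariance of $\Psi$ gives $\Psi\leq l^*$ (resp.\ $\Psi\geq l^*$) on each $M_i$, and hence on the smooth part of $\spt M_\infty$. Meanwhile $\{p_i-c_i\}$ stays in a fixed bounded region, a subsequence converges to some $p_\infty$, and the density/monotonicity argument used in Lemma \ref{Characterization of the Hyperplane} places $p_\infty\in\spt M_\infty$ with $\Psi(p_\infty)=l^*$. Thus $\spt M_\infty$ touches the smooth minimal hypersurface $\mathcal{G}^{0,l^*}$ at $p_\infty$ from the appropriate side, and Theorem \ref{Strong Maximum Principle} yields $\spt M_\infty\supseteq\mathcal{G}^{0,l^*}$.

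It remains to derive the final contradiction. In the thin slab case, every $M_i$ lies in the open slab $(-\delta,\delta)\times\mathbb{R}^n$, so $\spt M_\infty\subset\overline{S}=[-\delta,\delta]\times\mathbb{R}^n$; but $\mathcal{G}^{0,l^*}$ contains points with $|y_1|$ arbitrarily close to $\pi/(2\cos\theta)>\delta$, a contradiction. In the fat slab case, the smooth convergence outside $\mathcal{C}$ provided by the Dynamics Lemma, together with the fact that the $c_i$-translates of the wings of $M$ are graphs over $\mathcal{H}_1,\mathcal{H}_2$ whose decay to $\Pi(\mp\delta)$ becomes uniform on bounded sets outside $\mathcal{C}$, forces $\spt M_\infty\setminus\mathcal{C}\subset\Pi(-\delta)\cup\Pi(\delta)$; but $\mathcal{G}^{0,l^*}$ has points with $\langle u_\theta\rangle$ arbitrarily large (hence outside $\mathcal{C}$) satisfying $|y_1|<\pi/(2\cos\theta)<\delta$, so not lying in $\Pi(\pm\delta)$, again a contradiction. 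The hard part of the argument will be this last step in the fat slab case: one must verify carefully, via the smooth convergence of decaying graphs, that no extra ``third sheet'' of $\spt M_\infty$ can survive outside $\mathcal{C}$ beyond the two asymptotic hyperplanes $\Pi(\pm\delta)$.
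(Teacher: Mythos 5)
Your proposal is correct and follows essentially the same strategy as the paper: slide the family of tilted grim reapers $\mathcal{G}^{0,l}$ until first contact with $M$, rule out an actual touching point by the interior maximum principle, and handle ``contact at infinity'' via the Dynamics Lemma together with the Solomon--White strong maximum principle, deriving the final contradiction from the slab containment (thin case) and the asymptotic behaviour of the wings (fat case). Your reformulation of the sliding parameter as the level-set function $\Psi$ is a clean presentational device that makes the one-sidedness explicit, but it is not a different route.
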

\begin{proof} We argue again by contradiction. Assume at first that $2\delta>\tfrac{\pi}{\cos(\theta)}.$ By the asymptotic behaviour of $M$ we can place a tilted grim reaper cylinder $\mathcal{G}^{0,l}$ inside $S$, for sufficiently large $l$, so that
$\mathcal{G}^{0,l}\bigcap M=\varnothing $ (see Figure \ref{figure 5}).
\begin{figure}[htpb]
\begin{center}
\includegraphics[width=.45\textwidth]{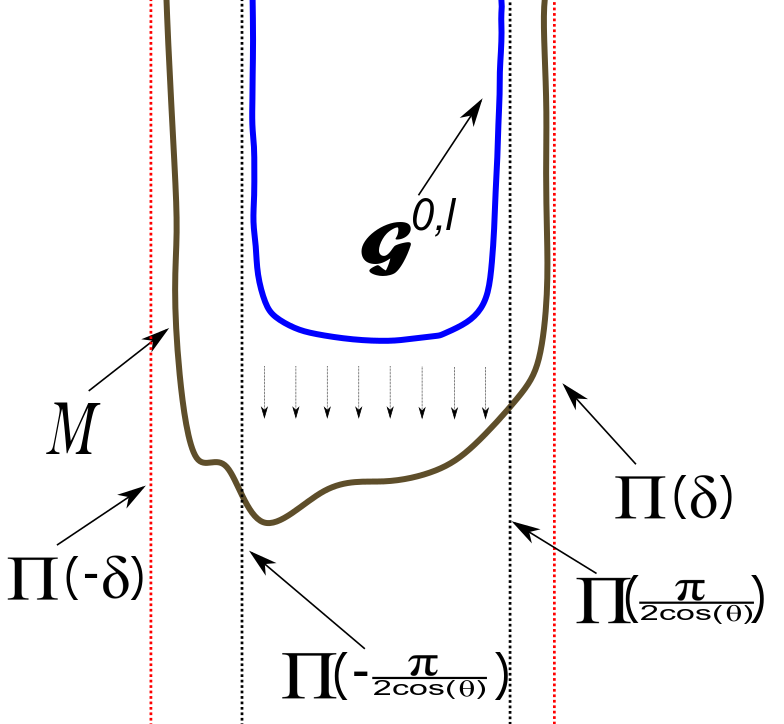} 
\end{center}
\caption{Transversal section of the behaviour of $\mathcal{G}^{0,l}$ with respect to $M$.} \label{figure 5}
\end{figure}

Now, consider $\mathcal{A}:=\left\{l\in \mathbb{R}\; : \; \mathcal{G}^{0,l}\bigcap M=\varnothing \right\}
$ and let $l_{0}=\inf\mathcal{A}$. Note that $l_{0}>-\infty$ by the asymptotic behaviour of $M.$ If $l_{0}\notin\mathcal{A}$, then $M$ and $\mathcal{G}^{0,l_{0}}$ have a point of contact. So $M=\mathcal{G}^{0,l_{0}}$ by Theorem \ref{Strong Maximum Principle}. But this is impossible. On the other hand, if $l_{0}\in \mathcal{A}$ then $\dist\left(M,\mathcal{G}^{0,l_{0}}\right)=0$. This means that there exists a sequence $\left\{p_{i}=(p_{i}^{1},\ldots,p_{i}^{n+1})\right\}$ in $M$ such that the sequences $\left\{p_{i}^{1}\right\}$ and $\left\{\langle p_{i},u_{\theta}\rangle\right\}$ are bounded, the sequence $\left\{(0,p_{i}^{2},\ldots,p_{i}^{n+1})-\langle p_{i},u_{\theta}\rangle u_{\theta}\right\}$ is unbounded and
$\dist\left(p_{i},\mathcal{G}^{0,l_{0}}\right)=0.$
Thus, up to a subsequence, one has  $p_{i}^{1}\to p_{\infty}^{1}$ and $\langle p_{i},u_{\theta}\rangle\to p_{\infty}^{u_{\theta}}$. Then, we consider the sequence of hypersurfaces $\left\{M_{i}\right\},$ where \[\textstyle{M_{i}:=M-(0,p_{i}^{2},\ldots,p_{i}^{n+1})+\langle p_{i},u_{\theta}\rangle u_{\theta}}.\] Using  Lemma \ref{Compactness Lemma} we can suppose that $M_{i}\rightharpoonup M_{\infty}$, where $M_{\infty}$ is a connected stationary integral varifold with $\displaystyle{p_{\infty}^{1}\ee_{1}+p_{\infty}^{u_{\theta}}u_{\theta} \in \spt M_{\infty}}.$ Hence $p_{\infty}^{1}\ee_{1}+p_{\infty}^{u_{\theta}}u_{\theta}$ is a point of contact between $\spt M_{\infty}$ and $\mathcal{G}^{0,l_{0}}$. Thus again by Theorem \ref{Strong Maximum Principle} we get that $\mathcal{G}^{0,l_{0}}=M,$ which contradicts our assumptions about the behaviour of $M$. Consequently $2\delta\leq\frac{\pi}{\cos(\theta)}.$ Comparing $M$ with a tilted grim reaper cylinder ``outside" $M$ we conclude $2\delta=\frac{\pi}{\cos(\theta)}.$ This completes the proof.
\end{proof}

In the next Lemma we prove that the connected components of $M \setminus\mathcal{C}$, that we will call from now on {\em the wings of $M$}, are graphs.

\begin{Lemma} \label{me}
If $t>0$ is sufficiently large, then the two connected components of $M^{+}(t)$ are graphs over the hyperplane $[\ee_{n+1}]^{\bot}.$ 
\end{Lemma}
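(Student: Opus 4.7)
The plan is to reduce the graph statement to a strict monotonicity of the wing profile and to establish the monotonicity by a contradiction argument combining the Dynamics Lemma with a barrier construction.

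For $t > \sup_{x \in \mathcal{C}} \langle x, u_\theta\rangle$, the set $M^+(t)$ lies outside $\mathcal{C}$ and hence inside the union of the two wings $M_1 \cup M_2$, where $M_k$ is $C^1$-asymptotic to $\mathcal{H}_k \subset \Pi(\mp \delta)$. By hypothesis each wing $M_k$ is the $\ee_1$-graph of a function $\varphi_k : \mathcal{H}_k \to \R$ with $\varphi_k > 0$ (by Lemma 4.2) and $\varphi_k, |\nabla \varphi_k| \to 0$ as $V := \langle \cdot, u_\theta\rangle \to \infty$. Since $\mathcal{H}_k \cap \{V \geq t\}$ is connected, each $M_k \cap M^+(t)$ is connected and thus $M^+(t)$ has exactly two connected components. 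Parametrizing $M_1$ by $y = (y_2, \ldots, y_{n+1}) \in \mathcal{H}_1$, so that a wing point reads $(-\delta + \varphi_1(y), y_2, \ldots, y_{n+1})$, the component $M_1 \cap M^+(t)$ is a graph over $[\ee_{n+1}]^\perp$, i.e. $x_{n+1}=f(x_1,\dots,x_n)$, precisely when for every fixed $(y_2, \ldots, y_n)$ the one-variable map $y_{n+1} \mapsto \varphi_1(y_2, \ldots, y_n, y_{n+1})$ is injective, for then the equation $x_1+\delta=\varphi_1$ determines $y_{n+1}=x_{n+1}$ uniquely. Using $\varphi_1 > 0$ and $\varphi_1 \to 0$ as $y_{n+1} \to \infty$, this injectivity is equivalent to strict monotonicity, namely $\partial_{y_{n+1}} \varphi_1 \neq 0$ throughout $\mathcal{H}_1 \cap \{V \geq t\}$; geometrically, $\partial_{y_{n+1}} \varphi_1(y) = 0$ iff $\ee_{n+1} \in T_p M$ at the corresponding wing point $p$.

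I will prove this non-vanishing by contradiction: suppose there are $p_i \in M$ with $V(p_i) \to \infty$ and $\ee_{n+1} \in T_{p_i} M$. Decomposing $p_i = p_i^{(1)} \ee_1 + c_i + V(p_i) u_\theta$ with $c_i \in [\ee_1, u_\theta]^\perp$, I apply the Dynamics Lemma to the translates $M_i := M - c_i$, extracting a subsequential weak limit $M_\infty$, a connected stationary integral varifold smooth outside $\mathcal{C}$. An auxiliary $u_\theta$-translation and a rescaling around $p_i$ that magnifies the perturbation $\varphi_1(y_i)$ into order-one size is then performed, producing a non-flat limit carrying an interior vertical tangency. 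Sliding a vertical hyperplane $\mathcal{P}$ parallel to $\ee_{n+1}$ from outside until first contact with this limit, and invoking the Strong Maximum Principle (Theorem 2.3), forces $\mathcal{P} \subset \spt M_\infty$, contradicting the asymptotic structure of $M_\infty$ near $\Pi(\pm\delta)$.

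The main obstacle is this last step. Without rescaling, the translations alone yield the trivial limit $\Pi(-\delta) \cup \Pi(\delta)$ in a neighborhood of $-\delta\ee_1$, whose tangent planes automatically contain $\ee_{n+1}$, so no contradiction arises; and because translators are not scale-invariant, the rescaling step must be handled carefully, converting the translator equation into a minimal-surface equation in a rescaled Ilmanen metric and requiring compactness of the rescaled sequence via the stable-minimal theory of Section 2. This rescale-then-barrier scheme is carried out in the spirit of the barrier constructions in the proofs of Lemmas 4.1 and 4.3.
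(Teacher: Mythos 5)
Your reduction is sound and in fact aims at the same target as the paper: a component of $M^{+}(t)$ is a (smooth) graph over $[\ee_{n+1}]^{\bot}$ exactly when $\ee_{n+1}\notin T_pM$ there, i.e.\ when $\langle\xi,\ee_{n+1}\rangle=-H$ never vanishes, which in your parametrization is $\partial_{y_{n+1}}\varphi_1\neq 0$. The problem is that you never prove this non-vanishing, and the scheme you sketch for it would not succeed. As you yourself observe, the untranslated limits produced by the Dynamics Lemma are the flat planes $\Pi(\pm\delta)$, whose tangent spaces all contain $\ee_{n+1}$, so no contradiction is available at that scale. Rescaling does not repair this: since translators are not scale-invariant, a blow-up limit is merely a minimal hypersurface for the (rescaled, hence asymptotically Euclidean) Ilmanen metric, and an interior point where the tangent space contains $\ee_{n+1}$ is in no way contradictory for such a limit -- a vertical plane is the simplest example. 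Your proposed barrier step also fails structurally: sliding a hyperplane to a first contact requires the limit to lie locally on one side of it, which a mere tangency at one point does not give; and the ``asymptotic structure near $\Pi(\pm\delta)$'' that you want to contradict is sent to infinity by the rescaling. More fundamentally, $H\neq 0$ on the far wing cannot be extracted from $C^1$-closeness to the hyperplane $\Pi(\mp\delta)$ by any soft compactness argument, because a hypersurface $C^1$-close to a vertical hyperplane can perfectly well have horizontal normal at many points.

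What the paper does instead is quantitative and local: it first notes that for large $t$ the set $M^{+}(t)$ lies where $x_1$ is within $\tau$ of $\pm\tfrac{\pi}{2\cos\theta}$, and there it represents the wing not as a graph over $\Pi(\mp\delta)$ but as a \emph{normal graph with small $C^1$ norm over the tilted grim reaper cylinder} $\mathcal{G}^{0,0}$, which has the same asymptotic half-hyperplane. Computing the unit normal of this perturbed graph explicitly (formulas \eqref{normal}--\eqref{curvature}) gives $\Omega\,H=\cos\theta\cos(x_1\cos\theta)\bigl(1+O(\epsilon)\bigr)$: the crucial point is that the perturbation's contribution to $\langle\xi,\ee_{n+1}\rangle$ degenerates at the same rate $\cos(x_1\cos\theta)$ as the unperturbed term near the slab edge, so $H>0$ pointwise. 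The projection to $[\ee_{n+1}]^{\bot}$ is then shown to be a proper, surjective covering map onto the strip $T_\tau$, and Lemma \ref{Maximum Principle for Unbounded Domain} excludes extra sheets. To complete your proof you would need to replace your compactness/rescaling step by an argument of this explicit kind (or by some other genuinely second-order input such as the $C^2$-asymptotics, which in the paper are only established later, in Lemma \ref{lem:grimreaper}, using the present lemma).
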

\begin{proof} 
Observe that, if we take a sufficiently large $t$, then \[\displaystyle{M^{+}(t)\subset M_{+}\left(\frac{\pi}{2\cos(\theta)}-\tau\right)\cup M_{-}\left(-\frac{\pi}{2\cos(\theta)}+\tau\right)},\] for a small enough $\tau>0$. Therefore, we only need to prove that if $\delta$ is small enough, then  $\displaystyle{M_{+}\left(\frac{\pi}{2\cos(\theta)}-\tau\right)}$ is a graph over a subset of $[\ee_{n+1}]^{\bot}$. The case of $M_{-}\left(-\frac{\pi}{2\cos(\theta)}+\tau\right)$ is treated in a similar way. 

Fix a sufficiently small $\epsilon>0$, with $\epsilon<\frac{1}{8}$. Since $\mathcal{G}\left(=\mathcal{G}^{0,0}\right)$ and $M\setminus \mathcal{C}$ are $C^{1}$-asymptotic to the same half-hyperplane contained in  $\Pi\left(\frac{\pi}{2\cos(\theta)}\right),$  we can represent  $M_{+}\left(\frac{\pi}{2\cos(\theta)}-\tau\right)$ as graph over $\mathcal{G}.$  Hence, we can find a smooth map $\varphi:T_{\tau}:=\left(\tfrac{\pi}{2\cos(\theta)}-\tau,\tfrac{\pi}{2\cos(\theta)}\right)\longrightarrow \mathbb{R}$ such that 
\begin{equation}
    \sup_{T_{\tau}}|\varphi|<\epsilon\ \operatorname{and}\   \sup_{T_{\tau}}|D\varphi|<\epsilon.
\end{equation}
Moreover, the map $\widetilde{F}:T_{\tau}\times\mathbb{R}^{n-1}\longrightarrow \mathbb{R}^{n+1}$ given by
\begin{equation}\label{parametrization}
       \widetilde{F}=F_{\theta}+\varphi\nu_{F},
\end{equation}
is a parametrization of $M_{+}\left(\tfrac{\pi}{2\cos(\theta)}-\tau\right)$, where $F_{\theta}$ is the parametrization given by \eqref{tiltedgrim} and \[\displaystyle{\nu_{F_{\theta}}(x_{1},\ldots,x_{n})=\sin(x_{1}\cos(\theta))\ee_{1}-\cos(x_{1}\cos(\theta))u_{\theta}}.\]
Let us define the map $\textstyle{\Pi:\mathbb{R}^{n+1}\longrightarrow \mathbb{R}^{n}}$ given by  $\Pi(x_{1}\ldots,x_{n+1})=(x_{1}\ldots,x_{n})$
and consider its restriction
\begin{equation}
  \widetilde{\Pi}:=\Pi_{\left|\operatorname{int}\left(M_{+}\left(\tfrac{\pi}{2\cos(\theta)}-\tau\right)\right)\right.}:\operatorname{int}\left(M_{+}\left(\tfrac{\pi}{2\cos(\theta)}-\tau\right)\right) \longrightarrow T_{\tau}.
\end{equation}

Note that the image of $\widetilde{\Pi}$ lies on $T_\tau$, because for all $x\in \operatorname{int}\left(M_{+}\left(\tfrac{\pi}{2\cos(\theta)}-\tau\right)\right)$ we have \[\displaystyle{\tfrac{\pi}{2\cos(\theta)}-\tau<\langle x,\ee_{1} \rangle <\tfrac{\pi}{2\cos(\theta)}} ,\] by  the definition of $M_{+}\left(\tfrac{\pi}{2\cos(\theta)}-\tau\right).$ The idea here consists of  showing that $\widetilde{\Pi}$ is a diffeomorphism. To deduce this, by a standard topological argument, we only must check that:
\begin{enumerate}
  \item $\widetilde{\Pi}$ is a proper covering map;
  \item $\operatorname{int}\left(M_{+}\left(\tfrac{\pi}{2\cos(\theta)}-\tau\right)\right)$ is path connected.
\end{enumerate}

First, let us show that $\widetilde{\Pi}$ is a local diffeomorphism. Equivalently, let us show that $H>0$ on $M_{+}\left(\tfrac{\pi}{2\cos(\theta)}-\tau\right)$. Here we will use the parametrization \eqref{parametrization}. To do this, we use the orthonormal frame $\{E_{i}\}$ given by
\begin{eqnarray}\label{Part1}
\{E_{1}:=\cos(x_{1}\cos(\theta))\ee_{1}+\sin(x_{1}\cos(\theta))u_{\theta}\;, E_{j}:=\ee_{j},\ j\in\{2,\ldots,n-1\}\\
E_{n}:=\cos(\theta)\ee_{n}+\sin(\theta)\ee_{n+1} \;, \nu_{F}\}\nonumber
\end{eqnarray}
Representing the vectors with respect this basis, we obtain that the unit normal $N_{\widetilde{F}}$ of $\widetilde{F}$ is given by the formula
\begin{equation}\label{normal}
\Omega\cdot N_{\widetilde{F}}=AE_{1}+B\textstyle\sum_{j=2}^{n-1}(-1)^{j\left[n-\frac{j+1}{2}\right]}\partial_{x_{j}}\varphi E_{j}+C\partial_{x_{n}}\varphi E_{n}+B\nu_{F},
\end{equation}
where
\begin{equation}\label{A}
A:=(-1)^{n-2}\left(\sin(\theta)\sin(x_{1}\cos(\theta))\partial_{x_{n}}\varphi-\cos(x_{1}\cos(\theta))\partial_{x_{1}}\varphi\right)
\end{equation}
\begin{equation}\label{B}
B:=1+\varphi\cos(\theta)\cos(x_{1}\cos(\theta))
\end{equation}
\begin{equation}\label{C}
C:=(-1)^{n-1}\cos(\theta)\left(1+\varphi\cos(\theta)\cos(x_{1}\cos(\theta))\right)
\end{equation}
\begin{eqnarray}\label{D}
\Omega^2 &:=&\left[\sin(\theta)\sin(x_{1}\cos(\theta))\partial_{x_{n}}\varphi-\cos(x_{1}\cos(\theta))\partial_{x_{1}}\varphi\right]^{2}\\
&+&[1+\varphi\cos(\theta)\cos(x_{1}\cos(\theta))]^2\left(1+\textstyle\sum_{j=2}^{n}\left(\partial_{x_{j}}\varphi\right)^2\right)\nonumber\\
&+&\cos^2(\theta)\left[1+\varphi\cos(\theta)\cos(x_{1}\cos(\theta))\right]^{2}\left(\partial_{x_{n}}\varphi\right)^2\nonumber
\end{eqnarray}
In particular by (\ref{TS}), we have
\begin{eqnarray}\label{curvature}
\nonumber \tfrac{\Omega\cdot H}{\cos{\theta}\cos(x_{1}\cos(\theta))}&:=&1+\varphi\cos(\theta)\cos(x_{1}\cos(\theta))+(-1)^{n}\sin(x_{1}\cos(\theta))\partial_{x_{1}}\varphi\\
&+&(-1)^{n}\sin(\theta)\cos(x_{1}\cos(\theta))\partial_{x_{n}}\varphi+(-1)^{n}\cos(\theta)\sin(\theta)\varphi\partial_{x_{n}}\varphi
\end{eqnarray}
Now by our assumptions about $\epsilon$, $\varphi$ and $D\varphi$ we immediately deduce that $H>0$ at all $p \in M_{+}\left(\tfrac{\pi}{2\cos(\theta)}-\tau\right).$ Hence, $\widetilde{\Pi}$ is a local diffeomorphism.

The previous argument also implies that $\widetilde \Pi$ is onto. Indeed, if it does not there would be a vertical cylinder which intersects $T_\tau$ but it would not intersect the set $M_{+}\left(\tfrac{\pi}{2\cos(\theta)}-\tau\right)$. Taking into account the asymptotic behaviour of $M$, we could translate horizontally this cylinder until having a first contact with
$$\operatorname{int} \displaystyle{\left(M_{+}\left(\tfrac{\pi}{2\cos(\theta)}-\tau\right) \right)}.$$ At this first contact the normal vector field to $M$ would be horizontal, which is absurd because we have proved that $H>0$ on $M_{+}\left(\tfrac{\pi}{2\cos(\theta)}-\tau\right).$

Finally, let us check that $\widetilde{\Pi}$ is proper. Let $K \subset T_{\tau}$ a compact set and $\left\{p_{i}\right\}_{i\in\mathbb{N}}$ be a sequence on $\widetilde{\Pi}^{-1}(K).$ Note that the sequence $\left\{p_{i}\right\}_{i\in \mathbb{N}}$ is bounded, because of the asymptotic behaviour of $M$ and the fact that $\dist\left(K,\partial T_{\tau}\right)>0$.  So, up to a subsequence, we can assume that $p_{i}\to p_{\infty}$. Since the set $\widetilde{\Pi}^{-1}(K)$ is closed, it follows that $p_{\infty}\in \widetilde{\Pi}^{-1}(K).$ This proves that $\widetilde{\Pi}^{-1}(K)$ is compact. 

At this point, we have that any connected component of $\operatorname{int} \left(M_{+}\left(\tfrac{\pi}{2\cos(\theta)}-\tau\right) \right)$ is a graph over $T_\tau$. But only one of them contains the wing. This means that if there were another connected component, $\Sigma$, then the function $x\mapsto\langle x,u_{\theta}\rangle$ would be bounded on $\Sigma$ and \(\displaystyle{\partial \Sigma \subset \Pi\left(\tfrac{\pi}{2\cos(\theta)}-\tau\right)},\) which is contrary to Lemma \ref{Maximum Principle for Unbounded Domain}. Repeating the same argument we should obtain that $M_{-}\left(-\tfrac{\pi}{2\cos(\theta)}+\tau\right)$ is graph over the hyperplane $[\ee_{n+1}]^{\perp}.$
\end{proof}

Now we are going to show that is possible to place a tilted grim reaper cylinder below $M$. This means that $M$ lies in the convex region limited by the tilted grim reaper cylinder. Without loss of generality we can assume from now on that $\inf_{M}\langle x,u_{\theta}\rangle=0.$ 

\begin{Lemma}  \label{lem:inside}
There is a tilted grim reaper cylinder that contains $M$ ``inside" it, i.e., $M$ lies in the convex region of the complement of a tilted grim reaper cylinder.
\end{Lemma}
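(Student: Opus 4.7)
The plan is to reformulate the conclusion analytically. Define on $M$ the continuous functional
\[
\psi(p) \ := \ \langle p, u_\theta\rangle + \sec\theta \, \log\cos\bigl(\langle p, \ee_1\rangle\cos\theta\bigr),
\]
which is well-defined by Lemma \ref{slablimitation} since $|\langle p, \ee_1\rangle|<\delta$ for every $p\in M$. A direct computation shows that the tilted grim reaper $\mathcal{G}^{0,l}$ is exactly the level set $\{\psi=l\}$ (restricted to the slab) and its convex region is the superlevel set $\{\psi\geq l\}$. Moreover, since the range of $\langle \cdot,\ee_1\rangle$ on $M$ is $(-\delta,\delta)$ with $2\delta=\pi/\cos\theta$, the horizontal parameter $t$ in any candidate $\mathcal{G}^{t,l}$ must vanish for $M$ to fit inside its slab; hence the lemma is equivalent to $\inf_M\psi>-\infty$, because then any $l<\inf_M\psi$ produces a valid $\mathcal{G}^{0,l}$.

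Assume for contradiction $\inf_M\psi=-\infty$ and pick $p_i\in M$ with $\psi(p_i)\to-\infty$. Since on $M$ one has $\langle p, u_\theta\rangle\geq 0$ (by the normalization $\inf_M\langle x, u_\theta\rangle=0$ preceding the lemma), $\psi\to-\infty$ forces $\langle p_i,\ee_1\rangle\to\pm\delta$; passing to a subsequence, $p_{i,1}\to\delta$. Decompose $p_i=p_i^\parallel+p_i^\perp$ with $p_i^\parallel\in\operatorname{span}(\ee_1,u_\theta)$ and $p_i^\perp\in[\ee_1,u_\theta]^\perp$, and observe that $\psi(p_i)=\psi(p_i^\parallel)$. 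Apply Lemma \ref{Compactness Lemma} to $M_i:=M-p_i^\perp$ to extract, along a subsequence, $M_i\rightharpoonup M_\infty$, a connected stationary integral varifold contained in the closed slab and inheriting the $C^1$-asymptotic wings of $M$ outside $\mathcal{C}$. Each $p_i^\parallel\in M_i$. If $\{p_i^\parallel\}$ is bounded, a subsequence converges to some $p_\infty^\parallel$ with $p_{\infty,1}^\parallel=\delta$; the monotonicity-formula argument of Lemma \ref{Characterization of the Hyperplane} places $p_\infty^\parallel\in\spt M_\infty\cap\Pi(\delta)$, and Theorem \ref{Strong Maximum Principle} together with the connectedness of $M_\infty$ forces $\spt M_\infty=\Pi(\delta)$---contradicting the $C^1$-asymptoticity of $M_\infty$ toward $\Pi(-\delta)$ along the other wing.

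The main obstacle is the remaining case $\langle p_i,u_\theta\rangle\to+\infty$, in which $p_i^\parallel$ escapes to infinity and is invisible to $M_\infty$. To restore a finite accumulation point I would further translate by $-\langle p_i,u_\theta\rangle\, u_\theta$, splitting it via $u_\theta=\sec\theta\,\ee_{n+1}-\tan\theta\, E_n$ (with $E_n\in[\ee_1,u_\theta]^\perp$) into a shift along the translating velocity $\ee_{n+1}$ (which preserves the translator equation, as $M+s\,\ee_{n+1}$ is again a translator with the same velocity) plus a shift in $[\ee_1,u_\theta]^\perp$. The resulting sequence $\tilde M_i$ consists of translators whose wings are $C^1$-asymptotic to shifted half-hyperplanes still contained in $\Pi(\pm\delta)$; the locally bounded area estimates and the stability of the graphical wings (from Lemmas \ref{me} and \ref{Shahriyari-Xin}) that underpin the Dynamics Lemma remain valid after this combined translation, and this verification is where the real technical care is concentrated. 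Granted it, a connected limit $\tilde M_\infty$ exists, the translated marked points $\tilde p_i=p_i^\parallel-\langle p_i,u_\theta\rangle u_\theta$ satisfy $\langle\tilde p_i,u_\theta\rangle=0$ and $\tilde p_{i,1}\to\delta$, hence subconverge to $\tilde p_\infty=\delta\ee_1\in\Pi(\delta)$; monotonicity gives $\tilde p_\infty\in\spt\tilde M_\infty$, and the strong-maximum-principle argument of the first case reproduces the contradiction $\spt\tilde M_\infty=\Pi(\delta)$. Hence $\inf_M\psi>-\infty$, and any $l$ strictly below this infimum yields the required tilted grim reaper cylinder.
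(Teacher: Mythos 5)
Your reduction of the lemma to the statement $\inf_M\psi>-\infty$ is correct, and your first case (near--contact points with bounded $u_\theta$--height) can be closed with the Dynamics Lemma as you describe. The gap is in the second case, which is where all the content of the lemma sits. After translating by $-p_i^{\perp}-\langle p_i,u_\theta\rangle u_\theta$ with $\langle p_i,u_\theta\rangle\to+\infty$, the limit $\tilde M_\infty$ is the same for \emph{every} $M$ satisfying the hypotheses: each wing of $M$ is a graph whose height and gradient tend to $0$ at infinity, so pushing it down by $\langle p_i,u_\theta\rangle\to\infty$ makes it converge locally in $C^1$ to the \emph{full} hyperplane $\Pi(\pm\delta)$, while the core $M\cap\mathcal{C}$ escapes to $\langle x,u_\theta\rangle=-\infty$ (this is precisely the observation exploited later in Lemma \ref{lem:grimreaper}). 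Hence $\spt\tilde M_\infty$ always contains $\Pi(-\delta)\cup\Pi(\delta)$ --- it does so even when $M$ is the tilted grim reaper itself, for which $\psi\equiv 0$ --- so the conclusion ``$\spt\tilde M_\infty\supset\Pi(\delta)$'' carries no information and is not a contradiction. Nor can you invoke the connectedness clause of the Dynamics Lemma to upgrade this to $\spt\tilde M_\infty=\Pi(\delta)$ and contradict the other wing: the proof of connectedness uses that the cylinder $\mathcal{C}$ is fixed, whereas for your sequence it escapes to infinity and the limit genuinely \emph{is} the disconnected set $\Pi(-\delta)\cup\Pi(\delta)$. The underlying problem is that $\psi(p_i)\to-\infty$ records how fast $M$ approaches $\Pi(\delta)$ relative to the (exponentially decaying) profile of a grim reaper; this information is destroyed once the comparison object surviving in the limit is the hyperplane itself rather than a grim reaper.

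The paper's proof is organized exactly so that your second case never arises. It slides the half grim reapers $\mathcal{G}^{t,-\epsilon}_{\pm}$ \emph{horizontally} toward $M$. If the sliding stops at $s_0>0$ with $\dist(\mathcal{G}^{s_0,-\epsilon}_{+},M)=0$ but no contact, any sequence of near--contact points automatically has bounded $u_\theta$--height: a point of $\mathcal{G}^{s_0,-\epsilon}_{+}$ at large height has $x_1$ close to $s_0+\tfrac{\pi}{2\cos\theta}>\delta$, while $M$ lies in the open slab $(-\delta,\delta)\times\R^n$ by Lemma \ref{slablimitation}, and the height is bounded below since $\inf_M\langle x,u_\theta\rangle=0>-\epsilon$. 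Consequently only translations in $[\ee_1,u_\theta]^{\perp}$ are ever needed, the Dynamics Lemma applies verbatim, and the limit touches the half grim reaper at an interior point at finite height --- a configuration that really is contradictory via Theorem \ref{Strong Maximum Principle}. If you wish to keep your analytic formulation, the repair is to replace the hyperplane comparison in your second case by a grim reaper comparison, which in effect reproduces this horizontal sliding.
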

\begin{proof}Consider the family of ``half"-tilted grim reaper cylinders
\begin{equation}
\mathcal{G}^{t,-\epsilon}_{\pm}:=\left\{x \in \mathcal{G}^{0,-\epsilon}\; : \;\pm\langle x,\ee_{1}\rangle\geq 0\right\}\pm t\ee_{1}
\end{equation}
where $\epsilon>0$ is fixed and $t \in [0,\infty).$
\begin{figure}[htpb]
\begin{center}
\includegraphics[width=.50\textwidth]{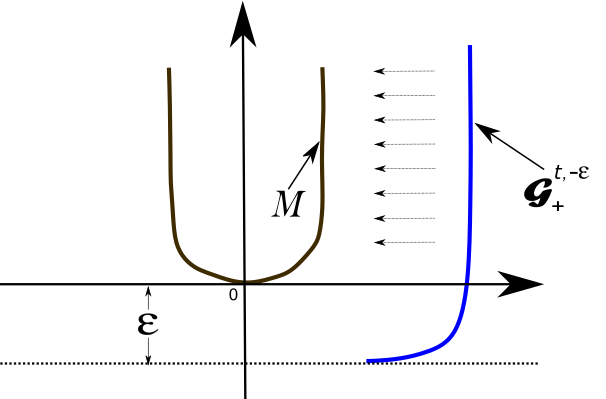} 
\end{center}
\caption{Transversal section of the behaviour of $\mathcal{G}^{t,-\epsilon}_{+}$ with respect to $M$.} \label{figure 6}
\end{figure}

Let us work with the ``half''-tilted grim reaper cylinder $\mathcal{G}^{t,-\epsilon}_{+}.$ By taking a sufficiently large $t_{0}$, we obtain $\mathcal{G}^{t_{0},-\epsilon}_{+}\cap M=\varnothing$. Hence the set $\mathcal{A}$ defined by $$\textstyle{\mathcal{A}:=\{t\in [0,\infty)\; : \; \mathcal{G}^{t,-\epsilon}_{+}\cap M=\varnothing\}}$$ is not empty. Take $s_{0}=\inf\mathcal{A}.$  We claim that $s_{0}=0.$  Otherwise, we have two possibilities for $s_{0}>0$: either $s_{0}\in \mathcal{A}$ or $s_{0}\notin \mathcal{A}$. If $s_{0}\notin\mathcal{A}$ then $\mathcal{G}^{t,-\epsilon}_{+}\cap M\neq\varnothing$ and since $\partial \mathcal{G}^{t,-\epsilon}_{+}\cap M=\varnothing,$  we conclude that $\mathcal{G}_{+}^{t,-\epsilon}\subset M$, by Theorem \ref{Strong Maximum Principle}, but this is absurd because \[0=\displaystyle{\inf_{M}\langle x,u_{\theta}\rangle>\inf_{\mathcal{G}^{t,-\epsilon}}\langle x,u_{\theta}\rangle=-\epsilon}.\] 

If $s_{0} \in \mathcal{A}$ then $\dist\left(\mathcal{G}^{t,-\epsilon}_{+}, M\right)=0$. This tell us that there exists a sequence $\left\{p_{i}=(p_{i}^{1},\ldots,p_{i}^{n+1})\right\}$ in $M$ such that:
\begin{enumerate}[i.]
\item$\textstyle\lim_{i}\dist\left(p_{i}, \mathcal{G}^{t,-\epsilon}_{+}\right)=0$;
\item$\{p_{i}^{1}\}\to p_{\infty}^{1}$ and $a<p_{i}^{1}-t<b$, where $0<a<b<\tfrac{\pi}{2\cos(\theta)}$ are constants;
\item$\{\langle p_{i},u_{\theta}\rangle\}\to p_{\infty}^{u_{\theta}}$;
\item The sequence $\left\{(0,p_{i}^{2},\ldots,p_{i}^{n+1})-\langle p_{i},u_{\theta}\rangle u_{\theta}\right\}$ is unbounded.
\end{enumerate}
In this case, consider the sequence of hypersurfaces 
$$\left\{M_{i}=M-(0,p_{i}^{2},\ldots,p_{i}^{n+1})+\langle p_{i},u_{\theta}\rangle u_{\theta}\right\}. 
$$
By Lemma \ref{Compactness Lemma} we can suppose $M_{i}\rightharpoonup M_{\infty}$, where $M_{\infty}$ is a connected stationary integral varifold. Since $\textstyle{p_{\infty}^{1}\ee_{1}+ p_{\infty}^{\theta}u_{\theta}\in \mathcal{G}^{t,-\epsilon}_{+}\cap \spt M_{\infty}},$ using again Theorem \ref{Strong Maximum Principle} we get $\mathcal{G}^{t,-\epsilon}_{+}=M_{\infty}.$ But this again contradicts the asymptotic behaviour of $M$. Therefore $\inf \mathcal{A}=0$, and 
\begin{equation*}
\mathcal{G}^{0,-\epsilon}_{+}\cap M=\varnothing, 
\end{equation*}
for all $\epsilon>0.$ A symmetric argument shows that 
$
\mathcal{G}^{0,-\epsilon}_{-}\cap M=\varnothing.
$
Thus
$
\mathcal{G}^{0,-\epsilon}\cap M=\varnothing.
$

This completes the proof.
\end{proof}

As an application of the previous lemma, we will conclude that our hypersurface is in fact a graph over the hyperplane $[\ee_{n+1}]^{\perp}.$ 

\begin{Lemma}
$M$ is a graph over $\left(-\tfrac{\pi}{2 \cos \theta},\tfrac{\pi}{2 \cos \theta}\right) \times \R.$
\end{Lemma}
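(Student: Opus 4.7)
The plan is to show that the projection $\widetilde{\Pi} := \Pi|_M$, where $\Pi(x_1,\dots,x_{n+1}) = (x_1,\dots,x_n)$, is a bijection from $M$ onto $D := \left(-\tfrac{\pi}{2\cos\theta}, \tfrac{\pi}{2\cos\theta}\right) \times \R^{n-1}$. Once this is established, $M$ is identified as a graph over $D$ (smoothness away from vertical tangent points follows, and the translator equation upgrades it to a smooth graph).

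The heart of the argument, and the step I expect to be the main obstacle, is to show that $M$ is disjoint from every nontrivial vertical translate: $M \cap (M + s\,\ee_{n+1}) = \varnothing$ for all $s > 0$. I will argue by contradiction. Suppose the infimum $s_0$ of such $s$ is positive (the local embeddedness of $M$ together with the $C^1$-asymptotic graphicality of the wings from Lemma \ref{me} rules out $s_0 = 0$, since on the wings $M$ is a smooth graph over $[\ee_{n+1}]^\perp$ and so two sheets of $M$ cannot lie within distance $\to 0$ there). By properness of $M$ and the closedness of the ``first contact'' condition, the infimum is attained. At a contact point any intersection must be tangential, for a transverse intersection would persist for $s$ slightly less than $s_0$, contradicting the definition of $s_0$. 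Theorem \ref{Interior Maximum Principle} then forces $M = M + s_0\,\ee_{n+1}$, i.e.\ $\ee_{n+1}$-periodicity of $M$. Since $\langle \ee_{n+1}, u_\theta\rangle = \cos\theta > 0$, iterating the translation in the $-\ee_{n+1}$ direction would produce points of $M$ with $\langle x, u_\theta\rangle \to -\infty$, contradicting the normalisation $\inf_M\langle x, u_\theta\rangle = 0$ fixed before Lemma \ref{lem:inside}.

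From this disjointness, injectivity of $\widetilde{\Pi}$ is immediate: if two points of $M$ share a $\widetilde{\Pi}$-image they differ by $c\,\ee_{n+1}$ for some $c$, and $c \neq 0$ would give a point of $M \cap (M + c\,\ee_{n+1})$. Lemma \ref{slablimitation} ensures $\widetilde{\Pi}(M) \subseteq D$.

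It remains to show $\widetilde{\Pi}(M) = D$, which I will do by proving the image is open and closed in $D$. Openness follows from invariance of domain, since $\widetilde{\Pi}$ is a continuous injection between $n$-manifolds. For closedness in $D$, take $y_k \in \widetilde{\Pi}(M)$ with $y_k \to y_\infty \in D$, write $p_k = (y_k, u_k) \in M$, and show the sequence $\{u_k\}$ is bounded. If $u_k \to +\infty$ then $\langle p_k, u_\theta\rangle = \cos\theta\,u_k - \sin\theta\,y_{k,n} \to +\infty$ (the $n$-th coordinate of $y_k$ being bounded), so for large $k$ the point $p_k$ lies in the wing region where by Lemma \ref{me} the hypersurface is a smooth graph $u = w(y)$ over a subset of $[\ee_{n+1}]^\perp$; hence $u_k = w(y_k) \to w(y_\infty)$, contradicting $u_k \to +\infty$. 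If $u_k \to -\infty$ then $\langle p_k, u_\theta\rangle \to -\infty$, contradicting $\inf_M\langle x, u_\theta\rangle = 0$. Proper embeddedness then yields a convergent subsequence $p_k \to p_\infty \in M$ with $\widetilde{\Pi}(p_\infty) = y_\infty$, so $y_\infty \in \widetilde{\Pi}(M)$. Since $D$ is connected and $\widetilde{\Pi}(M)$ is non-empty, open, and closed in $D$, it equals $D$. This exhibits $M$ as a graph over $D$.
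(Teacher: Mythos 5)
Your overall strategy (show $M$ is disjoint from its nontrivial vertical translates, then deduce injectivity and surjectivity of the projection) is in the spirit of the Rad\'o-type argument the paper uses, but the central step fails as written, and the failure occurs exactly at the difficulty this paper is built to overcome.

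The gap is in the claim that $M\cap(M+s\,\ee_{n+1})=\varnothing$ for all $s>0$. First, your argument that $s_{0}>0$ only controls the wings: Lemma \ref{me} makes $M$ a vertical graph where $\langle x,u_{\theta}\rangle$ is large, but says nothing about the region of bounded $\langle x,u_{\theta}\rangle$, which is still \emph{unbounded} in the directions $\ee_{2},\dots,\ee_{n-1},E_{n}$ spanning $[\ee_1,u_\theta]^\perp$. Two sheets of $M$ could a priori approach each other vertically as one travels to infinity along the cylinder axis, forcing $s_{0}=0$. Second, and more seriously, even if $s_{0}>0$ the infimum need not be attained: for $s_{k}\downarrow s_{0}$ the intersection points $p_{k}\in M\cap(M+s_{k}\ee_{n+1})$ can escape to infinity along those same directions, leaving $\dist(M,M+s_{0}\ee_{n+1})=0$ with empty intersection. ``Properness plus closedness of the contact condition'' does not rescue this for a non-compact hypersurface; contact at infinity is precisely the phenomenon the Dynamics Lemma (Lemma \ref{Compactness Lemma}) and the strong maximum principle for varifolds (Theorem \ref{Strong Maximum Principle}) were introduced to handle, and your proof never invokes them.

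The paper's proof is structured to avoid both problems. It first shows, using the Dynamics Lemma, Theorem \ref{Strong Maximum Principle} and Allard's regularity theorem, that translates of $M$ along $\pm E_{n}$ converge smoothly with multiplicity one to tilted grim reaper cylinders $\mathcal{G}^{0,\alpha}$ and $\mathcal{G}^{0,\beta}$; this gives graphical control on the distant slabs $R_{i}$ and $L_{i}$ along the cylinder axis, i.e.\ exactly on the region your argument leaves uncontrolled. Only then does it run the vertical sliding argument, and it does so on a compact exhaustion $\{\Lambda_{i}\}$ whose entire boundary has a graphical neighbourhood: compactness guarantees the first contact is attained, and the graphical boundary strip forces that contact to be interior, so Theorem \ref{Interior Maximum Principle} applies. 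To repair your proof you would need to reproduce this preliminary analysis; the sliding step alone does not suffice.
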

\begin{proof}For each $i \in\mathbb{N}$ consider the sets \[\displaystyle{T_{i}:=\left\{v\in \R^{n+1}\;: \langle v,E_{n}\rangle\geq i\right\}},\] where $E_{n}:=\cos(\theta)\ee_{n}+\sin(\theta)\ee_{n+1}$, and call $\alpha:=\displaystyle{\lim_{i}\inf_{T_{i}\cap M} \langle x,u_{\theta}\rangle}$. Consider a sequence $\left\{p_{i}=\left(p^{1}_{i},\ldots,p^{n+1}_{i}\right)\right\}$ in $M$ such that:
\begin{itemize}
\item[i.]$p_{i}\in T_{i}\cap M$ and $\langle p_{i},u_{\theta}\rangle-\textstyle{\inf_{T_{i}\cap M}\langle x,u_{\theta}\rangle<\tfrac{1}{i}}$
\item[ii.]$\{p_{i}^{1}\}\to p_{\infty}^{1}$ and $-\tfrac{\pi}{2\cos(\theta)}<p_{\infty}^{1}<\tfrac{\pi}{2\cos(\theta)}$;
\item[iii.]$\{\langle p_{i},u_{\theta}\rangle\}\to \alpha$;
\end{itemize}
Consider again the sequence of hypersurfaces $$\left\{M_{i}=M-(0,p_{i}^{2},\ldots,p_{i}^{n+1})+\langle p_{i},u_{\theta}\rangle u_{\theta}\right\}.$$ 

By Lemma \ref{Compactness Lemma}, we can suppose $M_{i}\rightharpoonup M_{\infty}$, where $M_{\infty}$ is a connected stationary integral varifold. Since $p_{\infty}^{1}\ee_{1}+\alpha u_{\theta}\in  \spt M_{\infty},$ it follows that $\textstyle{\inf_{\spt M_{\infty}}\langle x,u_{\theta}\rangle \leq\alpha}.$ We claim that $\alpha=\textstyle{\inf_{\spt M_{\infty}}\langle x,u_{\theta}\rangle}.$ Indeed, take any $p \in \R^{n+1}$ such that $\langle p,u_{\theta}\rangle<\alpha.$ Let $B_{r}(p)$ be the open ball in $\mathbb{R}^{n+1},$  where $r\in\left(0,\tfrac{\alpha-\langle p,u_{\theta}\rangle}{4}\right).$ Note that $B_{r}(p)\cap \Pi_{\alpha}=\varnothing,$ where $\Pi_{\alpha}=[u_{\theta}]^{\perp}+\alpha u_{\theta}.$ Take any $\epsilon\in \left(0,\tfrac{\alpha-\langle p,u_{\theta}\rangle)}{4}\right).$ By the definition of $\alpha,$ there is a $i_{0}$ such that if $i>i_{0}$ then 
\begin{equation}\label{equa}
\inf_{T_{i}\cap M}\langle x,u_{\theta}\rangle>\alpha-\epsilon>0.
\end{equation}

Let $\varphi:\mathbb{R}^{n+1}\longrightarrow[0,\infty)$ be a smooth function such that:
\begin{itemize}
\item[a.]$\spt \varphi \subset B_{r}(p)$
\item[b.]$\varphi_{|_{ B_{\frac{r}{2}}(p)}}\equiv1$
\end{itemize}
By (\ref{equa}), if $i>i_{0}$ we have
\begin{equation*}
\int_{\mathbb{R}^{n+1}}\varphi d\mu_{M_{i}}=0,
\end{equation*}
where $d\mu_{M_i}$ denotes the Radon (Riemannian) measure associated to $M_i$. This implies that $\textstyle{\int_{\mathbb{R}^{n+1}}\varphi d\mu_{M_{\infty}}=0}$ and so $p \notin \spt M_{\infty}$. Consequently $\alpha=\textstyle\inf_{\spt M_{\infty}}\langle x,u_{\theta}\rangle.$ Now, we claim that $M_{\infty}$ coincides with the tilted grim reaper cylinder. The proof follows by the same idea as in Lemma 4.5 to prove this (see Figure \ref{figure 6}). Consider the ``half"-tilted grim reaper cylinder
\begin{equation}
\mathcal{G}^{t,\alpha-\epsilon}_{+}:=\left\{x \in \mathcal{G}^{0,\alpha-\epsilon}\; : \;\langle x,\ee_{1}\rangle\geq 0\right\}+t\ee_{1}
\end{equation}
where $\epsilon>0$ and $t \in [0,\infty).$ Take a sufficiently large $t_{0}$ so that
\begin{equation*}
\mathcal{G}^{t_{0},\alpha-\epsilon}_{+}\cap \spt M_{\infty}=\varnothing.
\end{equation*}
By Lemma 4.5 this is possible. Consider the set $$\displaystyle{\mathcal{A}=\{t\in[0,+\infty)\; : \;\mathcal{G}^{t,\alpha-\epsilon}_{+}\cap \spt M_{\infty}=\varnothing\}},$$ which is non-empty.  We will show that $\inf\mathcal{A}=0.$ Indeed, otherwise, $s_{0}=\inf\mathcal{A}>0$ satisfy one of the following conditions: 
\begin{itemize}
\item[a.]$\mathcal{G}^{s_{0},\alpha-\epsilon}_{+}$ and $\spt M_{\infty}$ have a point of contact;
\item[b.]$\dist\left(\mathcal{G}^{s_{0},\alpha-\epsilon}_{+},\spt M_{\infty}\right)=0.$ 
\end{itemize}
According to Theorem \ref{Strong Maximum Principle} and Lemma 4.5, the first case is not possible. Regarding the second case, by Lemma 4.5 there exists a sequence $\left\{z_{i}=(z_{i}^{1},\ldots,z_{i}^{n+1})\right\}$ in $\spt M_{\infty}$ such that:
\begin{itemize}
\item[i.]$\displaystyle\lim_{i}\dist\left(z_{i}, \mathcal{G}^{s_{0},\alpha-\epsilon}_{+}\right)=0$;
\item[ii.]$\{z_{i}^{1}\}\to z_{\infty}^{1}$ and $a<z_{i}^{1}-t<b$ where $0<a<b<\frac{\pi}{2\cos(\theta)}$ are constants;
\item[iii.]$\{\langle z_{i},u_{\theta}\rangle\}\to z_{\infty}^{u_{\theta}}$;
\item[iv.]The sequence $\left\{(0,z_{i}^{2},\ldots,z_{i}^{n+1})-\langle z_{i},u_{\theta}\rangle u_{\theta}\right\}$ is unbounded;
\item[v.]$\Theta(\spt M_{\infty},z_{i})\geq 1,$ where $\Theta(\spt M_{\infty},\cdot)$ is the density of the Radon measure associated to $M_{\infty}.$
\end{itemize}
At this point, let us consider the sequence $$\displaystyle{\left\{\mathfrak{M}_{i}:=M_{\infty}-(0,z_{i}^{2},\ldots,z_{i}^{n+1})+\langle z_{i},u_{\theta}\rangle u_{\theta}\right\}}.$$ By Remark \ref{re:20} we can suppose $\mathfrak{M}_{i}\rightharpoonup \mathfrak{M}_{\infty},$ where $\mathfrak{M}_{\infty}$ is a connected stationary integral varifold. Following the same arguments which we used to show that $p_{\infty}\in \spt M_{\infty}$ in Lemma \ref{Characterization of the Hyperplane}, we see that \[\displaystyle{z_{\infty}^{1}\ee_{1}+z_{\infty}^{u_{\theta}}u_{\theta}\in \spt \mathfrak{M}_{\infty}\cap\  \mathcal{G}^{s_{0},\alpha-\epsilon}_{+}}.\] Moreover note that item ii implies that $z_{\infty}^{1}\ee_{1}+z_{\infty}^{u_{\theta}}u_{\theta}$ is an interior point of $\mathcal{G}^{s_{0},\alpha-\epsilon}_{+}.$ Therefore by Theorem \ref{Strong Maximum Principle} and Lemma 4.5 we obtain to a contraction. Thus, $\inf\mathcal{A}=0$ and \[\displaystyle{\mathcal{G}^{0,\alpha-\epsilon}_{+}\cap \spt M_{\infty}=\varnothing},\] because $\epsilon>0$ and $\textstyle{\inf_{\spt M_{\infty}}\langle x,u_{\theta}\rangle=\alpha}.$ Similarly, we deduce that \[\displaystyle{\mathcal{G}^{0,\alpha-\epsilon}_{-}\cap \spt M_{\infty}=\varnothing}.\] Hence 
$
\mathcal{G}^{0,\alpha-\epsilon}\cap \spt M_{\infty}=\varnothing.
$

Now, taking $\epsilon\to 0^{+}$ and using the fact that \[\displaystyle{\inf_{\spt M_{\infty}}\langle x,u_{\theta}\rangle=\min_{\spt M_{\infty}}\langle x,u_{\theta}\rangle=\alpha}\] we conclude that $\spt M_{\infty}$ touches the tilted grim reaper cylinder $\mathcal{G}^{0,\alpha}$ at $p_{\infty}^{1}\ee_{1}+\alpha u_{\theta}$. In particular, by Theorem \ref{Strong Maximum Principle} we conclude that $M_{\infty}=\mathcal{G}^{0,\alpha}$. This concludes the proof of our claim. Moreover, by Theorem \ref{Allard's Regularity Theorem} we have $M_{i}\longrightarrow M_{\infty}=\mathcal{G}^{0,\alpha},$ with multiplicity one. 

Let us consider the sets \[\displaystyle{S_{i}:=\left\{v\in \R^{n+1}\;: \langle v,E_{n}\rangle \leq-i\right\}},\] where $i \in\mathbb{N}$. Take $\displaystyle{\beta=\lim_{i}\inf_{S_{i}\cap M} \langle x,u_{\theta}\rangle}$. Let $\left\{q_{i}=\left(q^{1}_{i},\ldots,q^{n+1}_{i}\right)\right\}$ be  a sequence in $M$ such that:
\begin{itemize}
\item[i.]$q_{i}\in S_{i}\cap M$ and $\langle q_{i},u_{\theta}\rangle-\textstyle{\inf_{S_{i}\cap M}\langle x,u_{\theta}\rangle<\tfrac{1}{i}}$
\item[ii.]$\{q_{i}^{1}\}\to q_{\infty}^{1}$ and $-\tfrac{\pi}{2\cos(\theta)}<q_{\infty}^{1}<\tfrac{\pi}{2\cos(\theta)}$;
\item[iii.]$\{\langle q_{i},u_{\theta}\rangle\}\to \beta$.
\end{itemize}
Then, reasoning as before, we obtain \[\displaystyle{N_{i}:=M-(0,q_{i}^{2},\ldots,q_{i}^{n+1})+\langle q_{i},u_{\theta}\rangle u_{\theta}\longrightarrow\mathcal{G}^{0,\beta}},\] with multiplicity one.

By Lemma 4.4, we know that there exists a sufficiently large $t_{0}$, so that $M^{+}(t_{0})$ is a graph over an open set in the hyperplane $[\ee_{n+1}]^{\perp}.$ Moreover, we can choose at the same time a small enough $\tau>0$ so that $$\textstyle{M_{+}\left(\tfrac{\pi}{2\cos(\theta)}-2\tau\right)\cup M_{+}\left(-\tfrac{\pi}{2\cos(\theta)}+2\tau\right)\subset M^{+}(t_{0})}.$$ Hence, there is $i_{0} \in\n$ such that:

\begin{itemize}
\item[a.]There exist strictly increasing sequences of positive numbers $\{m^{1}_{i}\}$, $\{m^{2}_{i}\}$, $\{n^{1}_{i}\}$  and $\{n^{2}_{i}\}$ so that
\begin{equation*}
m^{1}_{i} <m^{2}_{i}\ \operatorname{and}\ -n^{1}_{i}<-n^{2}_{i}, \quad \mbox{for all $i>i_{0};$};
\end{equation*}
\item[b.]There exist smooth functions: 
\begin{equation}
\varphi_{i}:\left(-\tfrac{\pi}{2\cos(\theta)}+\tau,\tfrac{\pi}{2\cos(\theta)}-\tau\right)\times \left(m^{1}_{i} ,m^{2}_{i}\right)^{n-1}\longrightarrow\mathbb{R}
\end{equation}
and 
\begin{equation}
\phi_{i}:\left(-\tfrac{\pi}{2\cos(\theta)}+\tau,\tfrac{\pi}{2\cos(\theta)}-\tau\right)\times \left(-n^{1}_{i} ,-n^{2}_{i}\right)^{n-1}\longrightarrow\mathbb{R}
\end{equation}
satisfying
\begin{equation}
|\varphi_{i}|<\tfrac{1}{i}, |D\varphi_{i}|<\tfrac{1}{i}, |\phi_{i}|<\tfrac{1}{i}\ \mbox{and}\ |D\phi_{i}|<\tfrac{1}{i}, \; \mbox{for all $i>i_{0}$}
\end{equation}
and such that the hypersurfaces
\begin{eqnarray*}
R_{i}:=&\{(x_{1},\ldots,x_{n+1})\in M : -\tfrac{\pi}{2\cos(\theta)}+\tau<x_{1}<\tfrac{\pi}{2\cos(\theta)}-\tau\\
&(x_{2},\ldots,x_{n-1})\in \left(m^{1}_{i} ,m^{2}_{i}\right)^{n-2}, \langle x,E_{n}\rangle\in\left(m^{1}_{i} ,m^{2}_{i}\right)\}\nonumber
\end{eqnarray*}
and
\begin{eqnarray*}
L_{i}:=&\{(x_{1},\ldots,x_{n+1})\in M : -\tfrac{\pi}{2\cos(\theta)}+\tau<x_{1}<\tfrac{\pi}{2\cos(\theta)}-\tau\\
&(x_{2},\ldots,x_{n-1})\in \left(-n^{1}_{i} ,-n^{2}_{i}\right)^{n-2}, \langle x,E_{n}\rangle\in\left(-n^{1}_{i},-n^{2}_{i}\right)\}\nonumber
\end{eqnarray*}
can be written as graphs of functions $\varphi_{i}$ and $\phi_{i}$, respectively, over the corresponding pieces of the tilted grim reaper cylinder as in the proof of Lemma \ref{me}, where $E_{n}:=\cos(\theta)\ee_{n}+\sin(\theta)\ee_{n+1}$. 
\end{itemize} 
Now following the same idea as in Lemma \ref{me}, we see that $R_{i}$ and $L_{i}$ are graphs over domains in the hyperplane $[\ee_{n+1}]^{\perp}$ (for $i_{0}$ large enough.) Note that $R_{i}$ and $L_{i}$ are connected because they are graphs over the connected sets and the convergence has multiplicity one. 
\begin{figure}[htpb]
\begin{center}
\includegraphics[width=.35\textwidth]{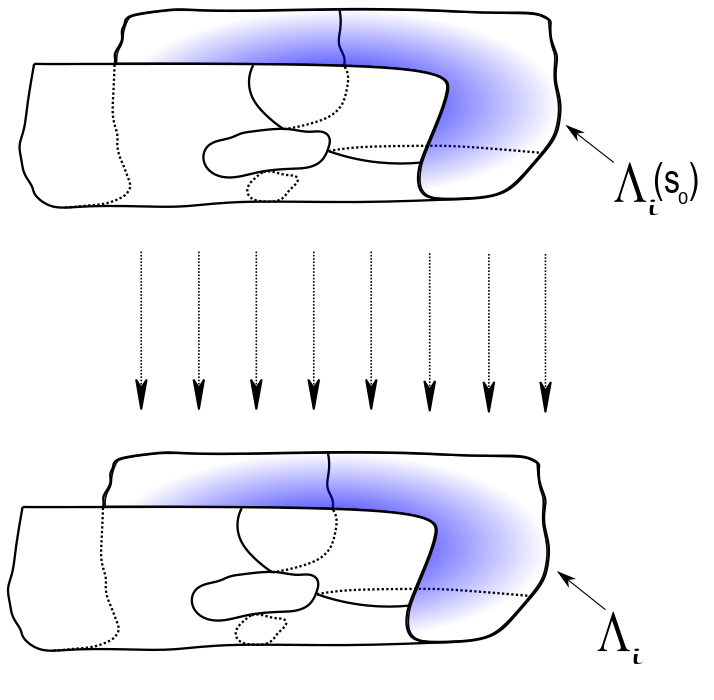}
\end{center}
\caption{Picture of $\Lambda_{i}$ and $\Lambda_{i}(s_{0})$.} \label{figure 8}
\end{figure}
Finally, let us consider the exhaustion $\left\{\Lambda_{i}\right\}$ of $M$ by compacts sets given by
\begin{eqnarray}\label{lambdai}
\Lambda_{i}:=\{x=(x_{1},\ldots,x_{n+1}) \in M \;:(x_{2},\ldots,x_{n-1})\in \left[-a_{i},b_{i}\right]^{n-2}\\
\langle x,E_{n}\rangle\in \left[-a_{i},b_{i}\right]\;:\langle x,u_{\theta}\rangle\leq i\}\nonumber
\end{eqnarray}

where $a_{i}=\frac{n_{i}^{1}+n_{i}^{2}}{2}$ and $b_{i}=\frac{m_{i}^{1}+m_{i}^{2}}{2}.$

Since $M^{+}(t_{0})$, $R_{i}$ and $L_{i}$ are vertical graphs, then a small strip $B_{i}$ around the boundary of $\Lambda_{i}$ is a graph over the hyperplane $[\ee_{n+1}]^{\perp}.$ Using the Rado's classical argument, the former fact implies that $\Lambda_i$ is a graph over the hyperplane $[\ee_{n+1}]^{\perp}$ if $i>i_{0}.$ 
Indeed, assume to the contrary that this is not true. Consider the family
\begin{equation*}
\left\{\Lambda_{i}(s):=\Lambda_{i}+s\ee_{n+1}\right\}_{s\in \mathbb{R}}
\end{equation*}
of translations of $\Lambda_{i}$ into the direction of $\ee_{n+1}.$ Since $\Lambda_{i}$ is compact there exists a sufficiently large $s_{0}$ so that
\begin{equation*}
\Lambda_{i}(s_{0})\cap\Lambda_{i}=\varnothing.
\end{equation*}
Now move $\Lambda_{i}(s_{0})$ into direction of $-\ee_{n+1}$(see Figure \ref{figure 8}). Since $\Lambda_{i}$ is not a graph and $B_{i}\cap\left\{B_{i}+s\ee_{n+1}\right\}=\varnothing,$ because $B_{i}$ is a graph over a subset of $[\ee_{n+1}]^{\bot}$. Then there exists a $s_{1}\in (0,s_{0})$ such that $\Lambda_{i}(s_{1})$ has a point of contact at interior with $\Lambda_{i}$. Therefore $\Lambda_{i}(s_{1})=\Lambda_{i}$, but this gives us to a contraction. Hence each $\Lambda_{i}$ must be a graph. Since $\textstyle{\bigcup_{i}\Lambda_{i}=M}$, then $M$ is also a vertical graph.
\end{proof}
Observe that the previous lemma implies that $H>0$. Hence, given any $v\in\R^{n+1}$ if $\xi_{v}=\langle\xi,v\rangle$, then $h_{v}=\frac{\xi_{v}}{H}$ are well defined on $M$, where $\xi$ stand for the Gauss map of $M$ and $H$ is the mean curvature of $M$. 

\begin{Lemma}
The functions $\xi_{v}$, $H$ and $h_{v}=\frac{\xi_{v}}{H}$ satisfy the following equalities
\begin{equation} \label{primera}
\Delta \xi_{v}+\langle\nabla\xi_{v},\nabla x_{n+1}\rangle+|A|^{2}\xi_{v}=0,
\end{equation}
\begin{equation} \label{segunda}
\Delta H+\langle\nabla H,\nabla x_{n+1}\rangle+|A|^{2}H=0,
\end{equation}
\begin{equation} \label{tercera}
\Delta h_{v}+\langle\nabla h_{v},\nabla (x_{n+1}+2\log H)\rangle=0.
\end{equation}

\end{Lemma}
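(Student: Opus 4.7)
The plan is to derive all three identities from the classical Simons-type formula for the Laplacian of the Gauss map. First I would recall (or briefly verify in a normal frame) the well-known identity
\begin{equation*}
\Delta \xi = \nabla H - |A|^{2}\xi,
\end{equation*}
valid for any oriented hypersurface in $\R^{n+1}$, where $\xi$ is viewed as an $\R^{n+1}$-valued map on $M$ and $\nabla H$ is the tangential gradient on $M$. This comes out of summing $\nabla^{\R^{n+1}}_{e_{i}}\nabla^{\R^{n+1}}_{e_{i}}\xi$ over a normal frame, using the Weingarten relation $\nabla^{\R^{n+1}}_{X}\xi=A(X)$ and the Codazzi equation. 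From the same Weingarten relation and the self-adjointness of the shape operator I would record two consequences valid for any constant $w\in\R^{n+1}$:
\begin{equation*}
\nabla \xi_{w}=A(w^{\top}),\qquad \nabla H=-A(\nabla x_{n+1}),
\end{equation*}
the second coming from the translator equation $H=-\langle\xi,\ee_{n+1}\rangle$ together with the identity $\ee_{n+1}^{\top}=\nabla x_{n+1}$ on $M$.

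Next, pairing the Simons-type identity with the vector $v$ of the lemma and substituting these two gradients gives
\begin{equation*}
\Delta \xi_{v}+|A|^{2}\xi_{v}=\langle \nabla H,v\rangle=-\langle A(\nabla x_{n+1}),v^{\top}\rangle=-\langle \nabla x_{n+1},A(v^{\top})\rangle=-\langle \nabla x_{n+1},\nabla \xi_{v}\rangle,
\end{equation*}
where I have used the tangentiality of $\nabla H$ and the symmetry of $A$. Rearranging yields (5.12). The identity (5.13) then follows by specializing (5.12) to $v=\ee_{n+1}$, since in that case $\xi_{v}=-H$ by the translator equation.

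Finally, for (5.14) I would exploit the positivity $H>0$ guaranteed by the previous lemma, write $\xi_{v}=h_{v}H$, and expand via Leibniz:
\begin{align*}
\Delta \xi_{v} &= H\,\Delta h_{v}+2\langle \nabla h_{v},\nabla H\rangle+h_{v}\,\Delta H,\\
\langle \nabla \xi_{v},\nabla x_{n+1}\rangle &= H\,\langle \nabla h_{v},\nabla x_{n+1}\rangle+h_{v}\,\langle \nabla H,\nabla x_{n+1}\rangle.
\end{align*}
Inserting these into (5.12) and using (5.13) to kill the entire $h_{v}$-contribution leaves
\begin{equation*}
H\bigl[\Delta h_{v}+\langle \nabla h_{v},\nabla x_{n+1}\rangle\bigr]+2\langle \nabla h_{v},\nabla H\rangle=0.
\end{equation*}
Dividing by $H$ and recognizing $2\langle \nabla h_{v},\nabla H\rangle/H=\langle \nabla h_{v},\nabla(2\log H)\rangle$ produces (5.14). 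The only delicate point in all of this is fixing consistent sign conventions so that the translator equation and the Simons identity match; once that is done at the outset, everything else is pure bookkeeping and no real obstacle arises.
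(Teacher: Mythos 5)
Your proposal is correct, and it follows the same route the paper takes: the paper simply cites \cite[Lemma 2.1]{MSHS15} for \eqref{segunda} and \cite[eq.\ (3.4.10)]{GARCIA16} for \eqref{primera} and then obtains \eqref{tercera} by combining them, which is exactly your Leibniz computation using $H>0$. The only difference is that you supply in full the standard Simons/Codazzi derivation of \eqref{primera}--\eqref{segunda} that the paper outsources to those references, and your observation that \eqref{segunda} is just \eqref{primera} with $v=\ee_{n+1}$ is a clean consolidation.
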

\begin{proof} Equation \eqref{segunda} was proved in \cite[Lemma 2.1]{MSHS15}. Similarly, the proof of \eqref{primera} can be found in \cite[eq. (3.4.10)]{GARCIA16}. Equation \eqref{tercera} follows from the combination of \eqref{primera} and \eqref{segunda}.
\end{proof}

Now we will show that our hypersurface is $C^2-$asymptotic to two half-hyperplanes with respect to the Euclidean metric.
\begin{Lemma} \label{lem:grimreaper}
The hypersurface $M$ is $C^{2}-$asymptotic outside the cylinder to two half-hyperplanes with respect to the Euclidean metric.
\end{Lemma}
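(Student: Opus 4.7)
The plan is to upgrade the $C^{1}$-asymptotic hypothesis to $C^{2}$ by a translation/blow-up argument combined with Allard's regularity theorem. The key point is that no further PDE-based gradient estimates beyond those already encoded in the $C^{1}$-asymptotic hypothesis will be needed: all second-order information comes for free from the compactness machinery once the weak limit is identified as a smoothly embedded hyperplane with multiplicity one.

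I would argue by contradiction. Suppose the $C^{2}$-asymptotic property fails, say on the wing asymptotic to $\mathcal{H}_{1}\subset \Pi(-\pi/(2\cos\theta))$. Then there exist $\eta_{0}>0$ and a sequence $\{p_{i}\}\subset\mathcal{H}_{1}$ with $\dist(p_{i},\partial\mathcal{H}_{1})\to\infty$ such that the second fundamental form of $M$ at the corresponding points $q_{i}$ of the wing satisfies $|A_{M}(q_{i})|\geq \eta_{0}$; here one uses that, since $|D\varphi_{1}(p_{i})|\to 0$ by the $C^{1}$-asymptotic hypothesis, the quantity $|A_{M}(q_{i})|$ is comparable to $|D^{2}\varphi_{1}(p_{i})|$. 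Now set $M_{i}:=M-q_{i}$. Each $M_{i}$ is again a translator with velocity $\ee_{n+1}$ passing through the origin, and the translated solid cylinder $\mathcal{C}-q_{i}$ leaves every compact set of $\R^{n+1}$ because $|q_{i}|\to\infty$ along the wing. For any fixed $R>0$ and $i$ sufficiently large, $M_{i}\cap \mathbb{B}_{R}(0)$ is a graph over a domain of the fixed vertical hyperplane $\Pi(0)$ whose graphing function has $C^{1}$-norm tending to zero. This yields both uniform local area bounds and varifold convergence $M_{i}\rightharpoonup \Pi(0)$ of multiplicity one in a neighbourhood of the origin.

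Applying Theorem \ref{Allard's Regularity Theorem} upgrades this weak convergence to smooth convergence with multiplicity one near the origin, so $|A_{M_{i}}(0)|\to |A_{\Pi(0)}(0)|=0$, contradicting $|A_{M_{i}}(0)|=|A_{M}(q_{i})|\geq \eta_{0}$. The symmetric argument handles the wing $\mathcal{H}_{2}$. The main obstacle is verifying that the $C^{1}$-asymptotic hypothesis already supplies every hypothesis Allard requires --- local area bound, weak multiplicity-one convergence, and smoothly embedded limit --- which must be done by carefully setting up, in each fixed ball around the origin, the graphical representation of the translated wings over the hyperplane $\Pi(0)$ and checking that the resulting graphing functions have $C^{1}$-norm going to zero as $i\to\infty$. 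Once this setup is in place, the conclusion is immediate.
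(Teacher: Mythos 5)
Your argument is correct in outline, but it takes a genuinely different and more economical route than the paper. The paper first sandwiches $M$ between two tilted grim reaper cylinders (this is the Claim at the start of its proof), translates the whole wings $M\setminus\mathcal{C}-i\,u_{\theta}$, invokes stability of graphs (Theorem \ref{Shahriyari-Xin}) together with the Schoen--Simon compactness theorem and Theorem \ref{Allard's Regularity Theorem} to get global $C^{\infty}$, multiplicity-one convergence to the hyperplanes \emph{with respect to Ilmanen's metric}, and only then runs a point-picking contradiction, which requires a somewhat delicate computation converting geodesic graphs for Ilmanen's metric into Euclidean graphs. You skip the barriers, the stability/Schoen--Simon step and the Ilmanen-metric detour entirely, observing that the $C^{1}$-asymptotic hypothesis alone already forces the translated surfaces $M-q_{i}$ to be single small-$C^{1}$-norm graphs over $\Pi(0)$ in a fixed ball, which is all that Allard (or, even more simply, interior Schauder estimates for the quasilinear graphical translator equation) needs to kill the curvature at the origin. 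What the paper's longer route buys is the stronger byproduct that the translated wings converge globally and smoothly to the full hyperplanes, plus the enclosing grim reaper; what your route buys is brevity and locality. Two points you should nail down. First, the reason $\mathcal{C}-q_{i}$, the other wing, and the compact core of $M$ all leave the fixed ball $\mathbb{B}_{R}(0)$ is \emph{not} merely that $|q_{i}|\to\infty$ (a translation along the cylinder's axis directions $[\ee_{1},u_{\theta}]^{\perp}$ leaves $\mathcal{C}$ invariant); it is that $\dist(p_{i},\partial\mathcal{H}_{1})\to\infty$ forces $\langle q_{i},u_{\theta}\rangle\to\infty$, i.e.\ the translation has unbounded component transverse to the axis. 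Second, Theorem \ref{Allard's Regularity Theorem} as stated in the paper is phrased for a sequence of boundaryless hypersurfaces converging weakly on all of $\R^{n+1}$ to a subset of a connected smooth limit, so you should either verify the global local-mass bounds for $M-q_{i}$ (they do hold, since every compact set is eventually disjoint from $\mathcal{C}-q_{i}$ and hence meets only the two graphical wings) or bypass Allard altogether with interior elliptic estimates, which apply directly because the piece of $M-q_{i}$ in $\mathbb{B}_{R}(0)$ is already known to be a graph with $C^{1}$-norm tending to zero.
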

\begin{proof} To prove this lemma, we will need the following claim.
\begin{claim} There exist a tilted grim reaper cylinder inside the region that lie above $M$.
\end{claim}
\noindent{\it Proof of claim:} Note that if $t$ is sufficiently large then $M^{+}(t)$ is graph over the hyperplane $\Pi(0).$  Now take the tilted grim reaper $\mathcal{G}^{0,t}$.  We will show that it lies in the region above $M$. As in Lemma \ref{lem:inside}, let us consider the family
\begin{equation*}
\left\{M_{*}(s):=M_{+}(0)+s\ee_{1}\right\}_{s \in [0,+\infty)}.
\end{equation*}

Taking into account the asymptotic behaviour of $M$, there exists a sufficiently large $s_{0}>0$ so that $M_{*}(s_{0})\cap \mathcal{G}^{0,t}=\varnothing.$ Applying the same argument as in Lemma \ref{lem:inside} and the fact that $M^{+}(t)$ is graph over $\Pi(0)$, we conclude that $\inf\mathcal{A}=0$, where $$\mathcal{A}:=\left\{s\in [0,+\infty)\; : \;M_{*}(s)\cap\mathcal{G}^{0,t}=\varnothing\right\}.$$ 
In particular, it holds that $M_{+}(0)\cap \mathcal{G}^{0,t}=\varnothing.$  Applying the same argument to the family 
\begin{equation*}
\left\{M^{*}(s):=M_{-}(0)-s\ee_{1}\right\}_{s \in [0,+\infty)},
\end{equation*}
we obtain $M_{-}(0)\cap \mathcal{G}^{0,t}=\varnothing$. Hence $M\cap\mathcal{G}^{0,t}=\varnothing $ and this proves the claim (see Figure \ref{figure 9}).

\begin{figure}[htpb]
\begin{center}
\includegraphics[height=.33\textheight]{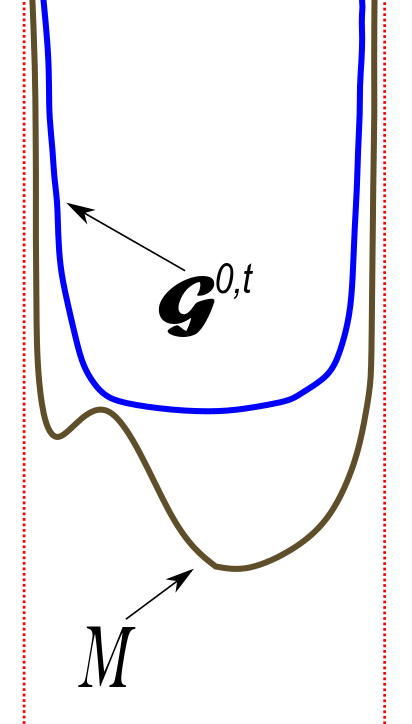}\hspace{.5cm}
\includegraphics[height=.33\textheight]{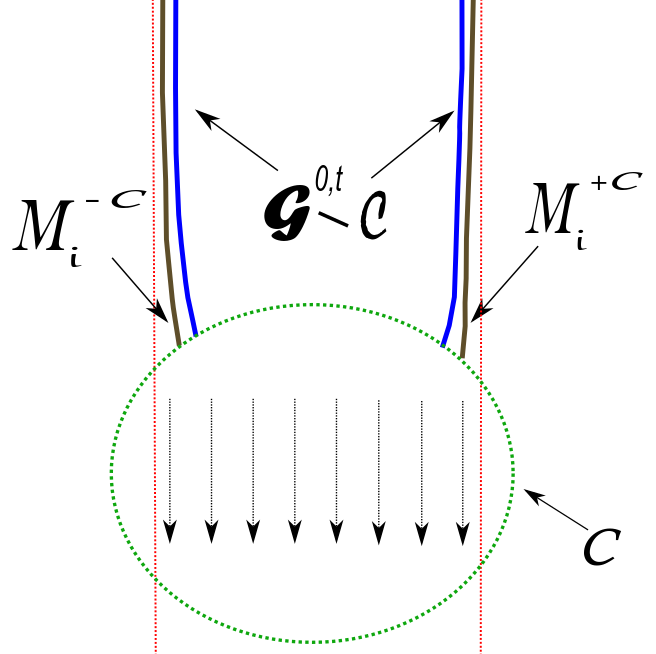}
\end{center}
\caption{Transversal section of the behaviour of $M$ and $\mathcal{G}^{0,t}$ in the left side and transversal section of the behaviour of $M\setminus\mathcal{C}$ and $\mathcal{G}^{0,t}\setminus \mathcal{C}$ in the right side.} \label{figure 9}
\end{figure}

Now take any cylinder $\mathcal{C}$ so that $M\setminus \mathcal{C}$ and $G^{0,t}\setminus \mathcal{C}$ have two connected components, which are $C^{1}-$asymptotic to two half-hyperplanes. Clearly the family 
\begin{equation*}
\left\{\mathcal{G}^{0,t}_{\mathcal{C}}:=\mathcal{G}^{0,t}\setminus \mathcal{C}-i \cdot u_{\theta}\right\}_{i\in \mathbb{N}}
\end{equation*}
converges which respect to the $C^{\infty}-$topology to two hyperplanes $\Pi\left(-\frac{\pi}{2\cos(\theta)}\right)$ and $\Pi\left(\frac{\pi}{2\cos(\theta)}\right)$. This forces the family 
$
\textstyle{\left\{M_{i}^{\mathcal{C}}:=M\setminus \mathcal{C}-i \cdot u_{\theta}\right\}_{i\in \mathbb{N}}}
$
to converge as sets to these hyperplanes (see Figure \ref{figure 9}). By Theorem \ref{Shahriyari-Xin} each component of $M_{i}^{\mathcal{C}}$ is stable. Let $M_{i}^{-\mathcal{C}}$ and $M_{i}^{+\mathcal{C}}$ be the two connected components of $M_{i}^{\mathcal{C}}$ that lie on the left side of the hyperplane $\Pi(0)$ and on the right side of the hyperplane $\Pi(0)$, respectively. It is important to observe that 
\[\displaystyle{\operatorname{Area}(M_{i}\cap K)\leq C(K)}\]for every compact set $K\subset \R^{n+1}$ where $C(K)$ is a constant only depending on $K$ and the dimension $n$, and the area is to take with respect Ilmanen's metric. Therefore, by Theorem \ref{Strong Compactness Theorem}, the sequences $\left\{M_{i}^{-\mathcal{C}}\right\}$ and $\left\{M_{i}^{+\mathcal{C}}\right\}$ converge in the $C^{\infty}-$topology (with multiplicity one) to the hyperplanes $\Pi\left(-\frac{\pi}{2\cos(\theta)}\right)$ and $\Pi\left(\frac{\pi}{2\cos(\theta)}\right)$, respectively,  away from the singular set, with respect to the Ilmanen's metric. Now by Theorem \ref{Allard's Regularity Theorem}, we obtain that the convergence is $C^{\infty}$ with multiplicity one everywhere. This proves that $M$ is $C^{\infty}$-asymptotic to the half-hyperplanes of $\Pi\left(-\frac{\pi}{2\cos(\theta)}\right)$ and $\Pi\left(\frac{\pi}{2\cos(\theta)}\right),$ with respect to the Ilmanen's metric. 

Finally, we claim that $M$ is $C^2-$asymptotic to the half-hyperplanes $\Pi\left(-\tfrac{\pi}{2\cos(\theta)}\right)$ and $\Pi\left(\frac{\pi}{2\cos(\theta)}\right).$ Let us work with the wing of $M$ which is $C^{1}-$close to the half-hyperplane $\mathcal{H}_{1}$ of $\Pi\left(\tfrac{\pi}{2\cos(\theta)}\right).$ As we know, given $\epsilon>0,$ there exists $\delta>0$ so that $M$ can be represent a graph of $\varphi$ defined over $\mathcal{H}_{1}$, with $\sup_{\mathcal{H}_{1}(\delta)}|\varphi|<\epsilon$ and $\sup_{\mathcal{H}_{1}(\delta)}|D\varphi|<\epsilon$. Arguing by contraction, from the definition of $C^2$-asymptotic implies there exist $\epsilon>0$ and a sequence $\left\{p_{i}\right\}$ in $M$ such that: 
\begin{equation}
|D^2\varphi(p_{i})|\geq\epsilon\ \operatorname{and}\ \langle p_{i},u_{\theta}\rangle\to\infty.
\end{equation}
Consider the sequence $\left\{M_{i}:=M-p_{i}\right\}.$ The argument above shows that the wings of $M_i$ which are asymptotic to $\mathcal{H}_{1}-p_i$ converge to $\Pi(0)$ in the $C^{\infty}-$topology, with respect to Ilmanen's metric Notice that this wing is graph of a function $\varphi_{i}(\cdot)=\varphi(\cdot+p_{i})-\varphi(p_{i})$ which is defined in a half-hyperplane of $\Pi(0)$ that contain the origin. 

Consider a small geodesic cylinder $W_{r,\epsilon}$ around $0 \in \R^{n+1}$, with respect to Ilmanen's metric. By definition of convergence in the $C^{\infty}-$topology, there exist sufficiently large $i_{0}\in \mathbb{N}$ so that for all $i>i_0$ the set $W_{r,\epsilon}\cap M_i$ is a graph of a function $\eta_{i}$ defined over $B_{r}(p)\subset\Pi(0)$ such that $\sup_{B_{r}\cap\Pi(0)}|D^l\eta_{i}|<\epsilon/8$, for all $l\in \mathbb{N}$. Notice that the hyperplanes parallel to $\ee_{n+1}$ are totally geodesic and $\ee_{1}$ is normal vector to $T_{0}\Pi(0)$ and we have the following relation between $\varphi_{i}$ and $\eta_i$:
\begin{equation*}
\varphi_{i}(\exp_{0}(q+\eta_{i}(q)\ee_{1})-\langle\exp_{0}(q+\eta_{i}(q)\ee_{1}),\ee_{1}\rangle \ee_{1})=\langle\exp_{0}(q+\eta_{i}(q)\ee_{1}),\ee_{1}\rangle.
\end{equation*}

Differentiating twice and evaluating at $q=0$, we deduce that
\begin{eqnarray*}
\langle D^2\exp_{0}(\overline{u}_i,\overline{w}_i),\ee_1\rangle+D^2\eta_{i}(u,w)&=& D^2\varphi_{i}(u,w)\\
\nonumber &+&\textstyle{d\varphi_{i}[D^2\exp_{0}(\overline{u}_i,\overline{w}_i)-\langle D^2\exp_{0}(\overline{u}_i,\overline{w}_i),\ee_{1}\rangle \ee_{1}],}  
\end{eqnarray*}
where $\overline{u}_i:=u+d\eta_{i}(u)\ee_{1}.$ From this expression, the control on the $C^\infty$ norm of $\eta_{i}$ and on the $C^1$ norm of $\varphi$ we get a contraction with $|D^2\varphi_i(0)|=|D^2\varphi(p_{i})|\geq\epsilon,$  if $i$ is sufficiently large. This proves the lemma.
\end{proof}

For the sake of simplicity, let us set $h_{j}:=\tfrac{\langle\xi,\ee_{j}\rangle}{H}$, where $j\in\{2,\ldots,n-1\}$ and $h_{n}=\tfrac{\langle\xi,E_{n}\rangle}{H}$ (recall that $E_{n}=\cos(\theta)\ee_{n}+\sin(\theta)\ee_{n+1}.$)
Using the previous lemma we can obtain some information about the behaviour of the functions $h_{j}$ at the ends of $M$. 

\begin{Lemma} \label{h}
The functions $h_{j}$, $j \in\{2,\ldots,n\},$ tend to zero as we approach the end of $M.$
\end{Lemma}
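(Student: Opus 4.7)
The plan is to argue by contradiction using Lemma~\ref{Compactness Lemma} and the elliptic equation \eqref{tercera}. Suppose $h_v$ does not tend to 0 for some $v \in \{\ee_2, \ldots, \ee_{n-1}, E_n\}$: there is $\{p_i\} \subset M$ with $|p_i| \to \infty$ and $h_v(p_i) \to s$. After replacing $v$ by $-v$ if needed, we may assume $s > 0$ and, enlarging $s$ to $\sup_M h_v$, that $h_v \leq s$ on $M$. Since $M$ lies in the slab $\bigl(-\tfrac{\pi}{2\cos\theta},\tfrac{\pi}{2\cos\theta}\bigr)\times\R^n$, the first coordinate of $p_i$ is bounded, so we decompose $p_i = p_i^{\parallel}+p_i^{\perp}$ with $p_i^{\parallel}\in\operatorname{span}(\ee_1,u_\theta)$ and $p_i^{\perp}\in[\ee_1,u_\theta]^{\perp}$ and split into cases according to whether $\{p_i^{\parallel}\}$ stays bounded.

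If $\{p_i^{\parallel}\}$ is bounded, then $|p_i^{\perp}|\to\infty$. Applying Lemma~\ref{Compactness Lemma} with $b_i:=-p_i^{\perp}\in[\ee_1,u_\theta]^{\perp}$ yields, up to a subsequence, $M_i:=M+b_i \rightharpoonup M_\infty$, a connected stationary integral varifold. The monotonicity-formula argument of Lemma~\ref{Characterization of the Hyperplane} places $q_\infty:=\lim p_i^{\parallel}\in\spt M_\infty$; choosing a nearby regular point we may assume $h_v^{M_\infty}$ is smooth near $q_\infty$ with value $s$ while $h_v^{M_\infty}\leq s$ on the regular set (inherited by smooth convergence from the same bound on each $M_i$). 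The strong maximum principle for \eqref{tercera} (an elliptic equation without zero-order term) then forces $h_v^{M_\infty}\equiv s$ on the connected component of the regular set containing $q_\infty$, and in particular on the wing of $M_\infty$ there. Equivalently $\langle\xi^{M_\infty},v+s\ee_{n+1}\rangle\equiv 0$, i.e., $M_\infty$ is invariant under translations in the direction $v+s\ee_{n+1}$. Because the translations $b_i\in[\ee_1,u_\theta]^{\perp}$ preserve the half-hyperplanes $\mathcal{H}_1,\mathcal{H}_2$ (whose invariance directions are precisely $[\ee_1,u_\theta]^{\perp}=\operatorname{span}(\ee_2,\ldots,\ee_{n-1},E_n)$), the wings of $M_\infty$ remain $C^1$-asymptotic to $\mathcal{H}_1,\mathcal{H}_2$; hence $\mathcal{H}_1$ itself must be invariant under $(v+s\ee_{n+1})$-translations. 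However $\langle\ee_{n+1},u_\theta\rangle=\cos\theta\neq 0$, so $v+s\ee_{n+1}\notin[\ee_1,u_\theta]^{\perp}$ whenever $s\neq 0$, and we reach a contradiction.

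It remains to treat the case $\langle p_i,u_\theta\rangle\to\infty$, i.e., $p_i$ escaping along a wing. Setting $t_i:=\langle p_i,u_\theta\rangle$, the argument inside the proof of Lemma~\ref{lem:grimreaper} shows that $M-t_i u_\theta$ converges in $C^\infty$ on compact subsets of $\R^{n+1}$ to the hyperplane $\Pi\bigl(\tfrac{\pi}{2\cos\theta}\bigr)$. Writing these translates locally as graphs $x_1=\tfrac{\pi}{2\cos\theta}+\psi_i(x_2,\ldots,x_{n+1})$ with $\psi_i\to 0$ in $C^\infty$, translation invariance of $\xi$ and $H$ gives $h_j^M(p_i)=-\partial_{x_j}\psi_i/\partial_{x_{n+1}}\psi_i$ and $h_n^M(p_i)=-\cos\theta\,\partial_{x_n}\psi_i/\partial_{x_{n+1}}\psi_i-\sin\theta$. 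After normalizing $\psi_i$ by a suitable seminorm and invoking elliptic compactness, one extracts a nontrivial limit of the linearized translator equation $\Delta\psi+\partial_{x_{n+1}}\psi=0$ that is bounded in the directions transverse to $u_\theta$ (namely $\ee_2,\ldots,\ee_{n-1},E_n$) and decays as $\langle x,u_\theta\rangle\to+\infty$. A separation-of-variables analysis identifies this limit, for $\theta\in[0,\pi/2)$, as the (normalized) tilted grim reaper profile, a function of $\langle x,u_\theta\rangle$ alone, for which $\partial_{x_j}\psi_\infty=0$ for $j=2,\ldots,n-1$ and $\partial_{x_n}\psi_\infty/\partial_{x_{n+1}}\psi_\infty=-\tan\theta$. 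Passing to the limit in the ratios above yields $h_j(p_i),h_n(p_i)\to 0$, the desired contradiction. The principal obstacle is precisely this wing-end case: the $C^\infty$-convergence $\psi_i\to 0$ does not by itself pin down the direction of $\nabla\psi_i$, and the Liouville-type rigidity for bounded decaying solutions of the linearized translator equation (isolating the grim reaper as the unique admissible profile) is the key technical step.
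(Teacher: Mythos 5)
Your overall strategy (contradiction along a divergent sequence, split according to whether the points escape laterally or up a wing) is different from the paper's, and as written it has two genuine gaps, one in each case. In your first case, you place $q_\infty=\lim p_i^{\parallel}\in\spt M_\infty$ and then apply the strong maximum principle for equation \eqref{tercera} to $h_v^{M_\infty}$ at $q_\infty$. But $q_\infty$ has bounded $(\ee_1,u_\theta)$-coordinates and may lie inside the cylinder $\mathcal{C}$, where Lemma \ref{Compactness Lemma} gives only weak varifold convergence and no regularity of $M_\infty$: there $h_v^{M_\infty}$ need not be defined, the value $s$ need not be attained (moving to ``a nearby regular point'' loses the equality $h_v=s$, and the maximum principle needs the supremum to be attained), and the connected component of the regular set containing $q_\infty$ need not contain the wings, so even a successful application of Hopf's principle would not propagate $h_v\equiv s$ to the asymptotic region where you derive the contradiction. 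In your second case you reduce to a ratio $-\partial_{x_j}\psi_i/\partial_{x_{n+1}}\psi_i$ of two quantities that both tend to zero, and you yourself flag that the closing step --- a Liouville-type rigidity for normalized limits of the linearized translator equation --- is unproven. That step is not routine (the renormalizing seminorm may be attained far from $p_i$, the limit may vanish at the relevant point, and the claimed classification of bounded decaying solutions is not established anywhere in the paper), so the argument does not close.

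For comparison, the paper's proof avoids both difficulties and is essentially computational. It first upgrades the asymptotics to $C^2$ (Lemma \ref{lem:grimreaper}), writes the relevant pieces of $M$ near $\partial\Lambda_i$ as graphs of a function $\varphi$ over the \emph{tilted grim reaper cylinder} (not over a hyperplane), and reads off $h_j$ from the explicit formula \eqref{equality}, whose only dangerous factor is $\partial_{x_j}\varphi/\cos(x_1\cos\theta)$. On the lateral boundary pieces $\cos(x_1\cos\theta)$ is bounded below and $|D\varphi|$ is small, so $h_j$ is small; on the wing pieces, where $\cos(x_1\cos\theta)\to 0$, the degeneration is exactly compensated by the bound $|\partial_{x_j}\varphi|\leq\bigl(\tfrac{\pi}{2\cos\theta}-x_1\bigr)\epsilon$ obtained in \eqref{des1} by integrating $\partial_{x_jx_1}\varphi$ from the edge of the slab, using the $C^2$-control and the vanishing of $D\varphi$ at $x_1=\tfrac{\pi}{2\cos\theta}$. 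This integration trick is precisely the ingredient that resolves the $0/0$ ratio you identified as the principal obstacle; if you want to salvage your approach, replacing the graph over the hyperplane by the graph over the grim reaper and using this $C^2$-based estimate is the way to do it, and it also disposes of your first case without any maximum principle on the singular limit.
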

\begin{proof} Consider the exhaustion $\left\{\Lambda_{i}\right\}$ given by \eqref{lambdai}. Notice that the boundary of each $\Lambda_{i}$ consists of the following $2n-1$ regions
\begin{eqnarray*}
\Lambda_{i}^{1}:=\{x=(x_{1},\ldots,x_{n+1})\in M \;:(x_{2},\ldots,x_{n-1})\in \left[-a_{i},b_{i}\right]^{n-2}\nonumber\\
\langle x,E_{n}\rangle\in \left[-a_{i},b_{i}\right]\;,\langle x,u_{\theta}\rangle=i\}
\end{eqnarray*}
\begin{eqnarray*}
\Lambda_{i}^{-2}:=\{x=(x_{1},\ldots,x_{n+1})\in M :\ (x_{3},\ldots,x_{n-1})\in[-a_{i},b_{i}]^{n-3}\;,\langle x,u_{\theta}\rangle\leq i\nonumber\\
x_{2}=-a_{i}\;,\langle x,E_{n}\rangle\in[-a_{i},b_{i}]\}
\end{eqnarray*}
\begin{eqnarray*}
\Lambda_{i}^{+2}:=\{x=(x_{1},\ldots,x_{n+1})\in M :\ (x_{3},\ldots,x_{n-1})\in[-a_{i},b_{i}]^{n-3}\;,\langle x,u_{\theta}\rangle\leq i\nonumber\\
x_{2}=b_{i}\;,\langle x,E_{n}\rangle\in[-a_{i},b_{i}]\}
\end{eqnarray*}
\begin{equation*}
\vdots
\end{equation*}
\begin{eqnarray*}
\Lambda_{i}^{-n}:=\{x=(x_{1},\ldots,x_{n+1})\in M :\ (x_{2},\ldots,x_{n-1})\in[-a_{i},b_{i}]^{n-2}\nonumber\\
 \langle x,u_{\theta}\rangle\leq i\;,\langle x,E_{n}\rangle=-a_{i}\}
\end{eqnarray*}
and
\begin{eqnarray*}
\Lambda_{i}^{+n}:=\{x=(x_{1},\ldots,x_{n+1})\in M :\ (x_{2},\ldots,x_{n-1})\in[-a_{i},b_{i}]^{n-2}\nonumber\\
\langle x,u_{\theta}\rangle\leq i\;,\langle x,E_{n}\rangle=b_{i}\}
\end{eqnarray*}
Let us study the behaviour of $h_{j}$ in a small strip around the boundary of $\Lambda_{i}$. We begin our analysis in the connected component $\Lambda_{i}^{1}$. Fix any sufficiently small $\epsilon >0 $. Taking into account  Lemma \ref{lem:grimreaper} and the definition of $M^{+}(t)$, we can use a similar argument as in the proof of Lemma \ref{me} to guarantee the existence of a sufficiently large $i_{1}(>i_{0})$, a sufficiently small $\tau>0$ and a smooth function  $\varphi$ defined on the strip \[\displaystyle{\mathcal{S}_\tau:=\left[\left(-\tfrac{\pi}{2\cos(\theta)},-\tfrac{\pi}{2\cos(\theta)}+\tau\right)\cup\left(\tfrac{\pi}{2\cos(\theta)}-\tau,\tfrac{\pi}{2\cos(\theta)}\right)\right]\times\mathbb{R}^{n-1}}\] satisfying
\begin{equation} \label{eq:tt}
\sup_{\mathcal{S}_{\tau}}|\varphi|<\epsilon, \quad \sup_{\mathcal{S}_{\tau}}|D\varphi|<\epsilon\ \mbox{ and} \; \ \sup_{\mathcal{S}_{\tau}}|D^2\varphi|<\epsilon
\end{equation}
and such that $M^{+}(i_{1})$ is graph of this function over the corresponding strip in the tilted grim reaper cylinder. From (\ref{normal}) and (\ref{curvature}) we obtain 
\begin{equation}\label{equality}
h_{j}=\frac{\alpha(j)}{\cos(\theta)}\frac{\partial_{x_{j}}\varphi}{\cos(x_{1}\cos(\theta))}\frac{1+\varphi\cos(\theta)\cos(x_{1}\cos(\theta))}{1+\sigma(\varphi,D\varphi)}, 
\end{equation}
where $\alpha(j)=(-1)^{j\left[n-\tfrac{j+1}{2}\right]},$ if $j\in\{2,\ldots,n-1\}$ and $\alpha(n)=(-1)^{n-1}\cos(\theta)$. Here 
\begin{eqnarray*}
\sigma(\varphi,D\varphi)&:=&\varphi\cos(\theta)\cos(x_{1}\cos(\theta))\nonumber\\
 &+&(-1)^{n}\{\sin(\theta)(1+\varphi\cos(\theta))\partial_{x_{n}}\varphi+\sin(x_{1}\cos(\theta))\partial_{x_{1}}\varphi\}
\end{eqnarray*}
Using the fact that $M^{+}(i_{1})$ is a graph over the tilted grim reaper cylinder and it is $C^2-$asymptotic to the half-hyperplane, we conclude that for all fixed $(x_{2},\ldots,x_{n})$ we have
\begin{equation*}
\lim_{x_{1}\to\tfrac{\pi}{2\cos(\theta)}^{-}}|\varphi|=\lim_{x_{1}\to\tfrac{\pi}{2\cos(\theta)}^{-}}|D\varphi|=0.
\end{equation*}
Therefore
\begin{equation}\label{des1}
|\partial_{x_{j}}\varphi(x_{1},x_{2},\ldots,x_{n})|=\left|-\int_{x_{1}}^{\tfrac{\pi}{2\cos(\theta)}}\partial_{x_{j}x_{1}}\varphi(x,x_{2},\ldots,x_{n})dx\right|\leq\left(\tfrac{\pi}{2\cos(\theta)}-x_{1}\right)\epsilon.
\end{equation}
So, by (\ref{eq:tt}), (\ref{equality}) and (\ref{des1}) we obtain 
that $
|h_{j}(x)|<o(\epsilon),
$
for all $x$ near $\Lambda_{i}^{1}$. Thus 
\begin{equation}\label{estimate1}
\sup_{N\left(\Lambda_{i}^{1}\right)}|h_{j}|<o(\epsilon)	
\end{equation}
where $N\left(\Lambda_{i}^{1}\right)$ is a small neighborhood the $\Lambda_{i}^{1}$ in $\Lambda_{i}$, if $i>i_{1}$. \

Now we are going to work with the components of $\partial \Lambda_i$ that intersect $M^{-}(i_{1})$. Since $R_{i}$ and $L_{i}$ are $C^{1}$-close  to a strip in the tilted grim reaper cylinder, there is sufficiently large $i_{2}$ such that $\textstyle{R_{i}\cap\left\{(x_{1},\ldots,x_{n+1})\in \mathbb{R}^{n+1};\langle x,u_{\theta}\rangle\leq i_{1}\right\}}$ is a graph over the strip in the tilted grim reaper cylinder of a function $\varphi_{i}$ defined in the strip \[\displaystyle{G_{\tau}:=\left(-\frac{\pi}{\cos(\theta)2}+\frac{\tau}{2},\frac{\pi}{2\cos(\theta)}-\frac{\tau}{2}\right)\times (m_{i}^{1},m^{2}_{i})^{n-1}}\] satisfying the following properties
\begin{equation}\label{des2}
\sup_{G_{\tau}}|\varphi_{i}|<\epsilon\ \operatorname{and} \sup_{G_{\tau}}|D\varphi_{i}|<\epsilon.
\end{equation}
The same estimate is true for $L_{i}.$ Furthermore, since $\cos (x_{1}\cos(\theta))> \kappa>0$  in $G_{\tau}$, for a suitable constant $\kappa,$ then (\ref{des2}) and (\ref{equality}) gives us that
$
\textstyle{\sup_{G_{\tau}}|h_{j}|<o(\epsilon)}.
$
Hence
\begin{equation}\label{estimate2}
\sup_{N\left(\Lambda_{i}^{\pm k}\right)}|h_{j}|<o(\epsilon),
\end{equation}
where $k\in\{2,\ldots,n\}$ and $N\left(\Lambda_{i}^{\pm k}\right)$ is a small neighbourhood of the $\Lambda_{i}^{\pm k}$ in $\Lambda_{i}.$ Hence for (\ref{estimate1}) and (\ref{estimate2}) we have $\textstyle{\sup_{N\left(\partial\Lambda_{i}\right)}|h_{j}|<o(\epsilon)}$, for any $i \in \n$, $i>\max\{i_{1},i_{2}\}$.
\end{proof}

\begin{flushleft}
Now we are ready to prove the main theorem of this paper.
\end{flushleft}
\begin{proof}[Proof of Theorem \ref{th:41}] Recall that we are assuming that $M$ is asymptotic to half-hyperplanes that are contained in different hyperplanes and that $\inf_M(\langle x,u_{\theta}\rangle)=0$. According to Lemma \ref{h} there is an interior point where $h_{j}$ has an extremum. Then, because $h_j$ is a solution of \eqref{tercera}, we can apply  Hopf's maximum principle to conclude that $h_{j}=0,$ that is, $\xi_{j}=0$ on $M$ for all $j \in \left\{2,\ldots,n\right\}$. In particular, each $\ee_{j}$ and $E_{n}$ are tangent vectors of $M$ for $j \in \left\{2,\ldots,n-1\right\}$ at all point of $M$. Thus, we can consider a global orthonormal frame in $M$, $\left\{E_{1},\ E_{j}=\ee_{j};\ j \in \left\{2,\ldots,n-1,\right\};E_{n} \right\}$ (see \eqref{Part1}), where $E_{1}=E_{2}\wedge\ldots\wedge E_{n}\wedge \xi$. 
Differentiating each $\xi_{j}$, $j\in \left\{2,\ldots, n\right\}$, with respect to $E_{k}$, $k\in \left\{1,\ldots, n\right\}$, we deduce
\begin{equation*}
0=E_{k}(\xi_{j})=E_{k}\langle\xi,E_{j}\rangle=\langle D_{E_{k}}\xi,E_{j}\rangle=A(E_{k},E_{j}).
\end{equation*}
Thus
\begin{equation*}
|A|^{2}=\sum_{i,j}A(E_{j},E_{k})^{2}=A(E_{1},E_{1})^{2}=H^{2}.
\end{equation*}
Therefore, by \cite[Theorem B]{MSHS15} we conclude that $M=\mathcal{G}^{0,0}$, because $0=\textstyle{\inf_{M}\langle x,u_{\theta}\rangle}$.
\end{proof}

\subsection{The case $\theta=\pi/2$.}

Now we are going to work in the case when the cylinder is vertical, i.e. the axis of the cylinder is parallel to the translating velocity.  The philosophy now is to compare $M$ with a suitable translation of itself to arrive a contraction. Then we need that the limit of the translated hypersurfaces is also smooth, but this holds true if $n<7$. However, we conjecture that the result is true for any dimension. 
First of all, we would like to point out the following:

\begin{Remark}\label{remarkcompctness}
The Dynamics Lemma (Lemma \ref{Compactness Lemma}) is still true in this situation, for every $n\geq 2.$ The proof works exactly as in the case $\theta<\pi/2$, except the proof that the sequence $\{M_i^{-}(\delta)\}$ has locally bounded area. In this case, in order to prove that the  area blow-up set is empty we use as barriers the family $P_\lambda=W^2_\lambda \times \R^{n-2}$ (cylinders over the translating catenoid of dimension $2$), for a sufficiently large $\lambda>0$ so that the cylinder lies inside the neck of $P_\lambda=W^2_\lambda \times \R^{n-2}$. Hence, if the set of area blow-up is not empty, then we could move $P_\lambda=W^2_\lambda \times \R^{n-2}$ until we get a first contact point with the  area blow-up set, which is impossible. 
\end{Remark}

\begin{Theorem}[Vertical Cylinders]\label{limitcase}
Let $M^n\subset \mathbb{R}^{n+1}$ be a complete, connected, properly embedded translating soliton and $\displaystyle{\mathcal{C}:=\{x\in \mathbb{R}^{n+1} \; : \; \langle x,\ee_{1}\rangle^2+\langle x,  \ee_{n}\rangle^2 \leq r^2\}},$ for $r>0.$ Assume that $M$ is $C^{1}$-asymptotic to two half-hyperplanes outside $\mathcal{C}$ and $n<7$. Then $M$ must coincide with a hyperplane parallel to $\ee_{n+1}.$
\end{Theorem}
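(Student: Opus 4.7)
The plan is to adapt the strategy of Theorem~\ref{th:41}, replacing the tilted grim reaper cylinders (which degenerate as $\theta\to\pi/2$) by iterated weak limits of translates of $M$ in the enlarged subspace $[\ee_1,\ee_n]^\perp=\operatorname{span}(\ee_2,\ldots,\ee_{n-1},\ee_{n+1})$. First, note that the $(n-1)$-dimensional boundary $\partial\mathcal{H}_k$ lies in the vertical cylinder $\mathcal{C}$ and so has bounded projection to the $(x_1,x_n)$-plane; this projection, being affine, must reduce to a point, whence $\partial\mathcal{H}_k\subset\{x_1=a_k,\,x_n=b_k\}$ for some $a_k,b_k$, and the $n$-dimensional hyperplane $\Pi_k$ through $\partial\mathcal{H}_k$ is automatically parallel to $\ee_{n+1}$. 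After a rotation about the $\ee_{n+1}$-axis in the $(x_1,x_n)$-plane (a symmetry of the setup) one may assume $\mathcal{H}_1\subset\Pi_1:=\{x_1=-\delta\}$ for some $\delta\geq 0$. If moreover $\mathcal{H}_2\subset\Pi_1$, then exactly the argument of Lemma~\ref{Characterization of the Hyperplane}, with $u_\theta$ replaced by $-\ee_n$ and with the Dynamics Lemma in the form of Remark~\ref{remarkcompctness}, shows $x_1$ is constant on $M$ and hence $M=\Pi_1$, which is the desired conclusion.

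It remains to rule out the case $\mathcal{H}_2\subset\Pi_2\neq\Pi_1$. My plan is to produce via iterated weak limits a translate $M_\infty^{(\infty)}$ of $M$ that is forced both to be a single vertical hyperplane $\Pi$ and to contain $\mathcal{H}_1\cup\mathcal{H}_2$, yielding the contradiction $\Pi_1=\Pi=\Pi_2$. First, take $b_i^{(1)}=t_i\,\ee_{n+1}$ with $t_i\to\infty$; by Lemma~\ref{Compactness Lemma} (valid here thanks to Remark~\ref{remarkcompctness}), a subsequence of $\{M+b_i^{(1)}\}$ converges weakly to a connected stationary integral varifold $M_\infty^{(1)}$, and Theorem~\ref{Strong Compactness Theorem} makes it smooth everywhere since the restriction $n<7$ renders the singular set empty. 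A standard diagonal extraction across $s\in\mathbb{Q}$ lets one take $M_\infty^{(1)}$ to be invariant under all $\ee_{n+1}$-translations; combined with Theorem~\ref{Ilmanen}, smoothness, and the translator equation, the invariance forces $\ee_{n+1}^\perp=0$ on $M_\infty^{(1)}$, so $M_\infty^{(1)}$ is Euclidean minimal. Iterating with Remark~\ref{re:20} and translations $t_i\ee_j$ for $j=2,\ldots,n-1$ gives a final smooth limit $M_\infty^{(\infty)}$ invariant under the full $(n-1)$-parameter group $[\ee_1,\ee_n]^\perp$. Any such smooth hypersurface is a cylinder over a curve $\gamma\subset\operatorname{span}(\ee_1,\ee_n)$, and Euclidean minimality forces $\gamma$ to be a straight line, so $M_\infty^{(\infty)}=\Pi$ is a single vertical hyperplane.

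To close the argument I need to show $\mathcal{H}_1,\mathcal{H}_2\subset\Pi$. Because each $\mathcal{H}_k$ is itself set-wise invariant under $[\ee_1,\ee_n]^\perp$, the wing of $M+b_i$ over $\mathcal{H}_k$ is just a translate of the original wing within $\mathcal{H}_k$, so it is the graph of a function $\varphi_i$ satisfying the same $C^1$-asymptotic bound $|\varphi_i|+|D\varphi_i|<\epsilon$ on $\mathcal{H}_k(\delta_0(\epsilon))$ as the original, uniformly in $i$. After the iteration the limit graph $\varphi_\infty$ is invariant in the $\ee_2,\ldots,\ee_{n-1},\ee_{n+1}$ directions (so depends only on $x_n$) and tends to $0$ as $x_n\to\infty$; but any graph over $\mathcal{H}_k$ representing a subset of a hyperplane must be affine in its parameters, and the only affine function in $x_n$ decaying at infinity is $0$. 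Thus the wing of $M_\infty^{(\infty)}$ equals $\mathcal{H}_k$, and Theorem~\ref{Allard's Regularity Theorem} upgrades the weak convergence to smooth multiplicity-one convergence along the wings, so $\mathcal{H}_k\subset\Pi$ for $k=1,2$. The $n$-dimensional $\mathcal{H}_k$ then sits in both $\Pi$ and $\Pi_k$, forcing $\Pi=\Pi_k$ and therefore $\Pi_1=\Pi_2$, the promised contradiction. The hardest step is precisely this wing-preservation claim: one must propagate the quantitative $C^1$-asymptotic decay of each wing through the chain of iterated weak limits and combine it with Allard's regularity to conclude that the half-hyperplanes really persist in the final limit with multiplicity one.
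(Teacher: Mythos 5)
Your reduction of the problem is sound: the observation that $\partial\mathcal{H}_k\subset\mathcal{C}$ forces $\partial\mathcal{H}_k=\{x_1=a_k,\ x_n=b_k\}$, hence that each $\Pi_k$ is vertical and each $\mathcal{H}_k$ is invariant under $[\ee_1,\ee_n]^\perp$, is a nice shortcut, and your treatment of the case $\Pi_1=\Pi_2$ (running the end of Lemma \ref{Characterization of the Hyperplane} with Remark \ref{remarkcompctness}) matches the paper. The core of your argument for ruling out $\Pi_1\neq\Pi_2$, however, has a genuine gap. You apply Theorem \ref{Strong Compactness Theorem} to the sequence $\{M+t_i\ee_{n+1}\}$ to conclude that $M_\infty^{(1)}$ is smooth \emph{everywhere} when $n<7$. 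Schoen--Simon compactness requires the hypersurfaces in the sequence to be \emph{stable}, and stability of $M$ as a minimal hypersurface for Ilmanen's metric is only known on the two wings, where $M$ is a graph (Theorem \ref{Shahriyari-Xin}); nothing is known about the part of $M$ inside $\mathcal{C}$. So the Dynamics Lemma only hands you a stationary integral varifold inside the cylinder, and every subsequent step collapses precisely there: a stationary integral varifold invariant under the $(n-1)$-parameter group $[\ee_1,\ee_n]^\perp$ is a cylinder over a one-dimensional stationary network in the $(x_1,x_n)$-plane, which may contain triple junctions or several lines, so you cannot conclude that $M_\infty^{(\infty)}$ is a single hyperplane. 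A secondary but also real gap is the claim that a ``standard diagonal extraction across $s\in\mathbb{Q}$'' yields a limit invariant under all $\ee_{n+1}$-translations: extracting a subsequence along which $M+(t_i+s)\ee_{n+1}$ converges for every rational $s$ only gives $V_s=V_0+s\,\ee_{n+1}$, not $V_s=V_0$; genuine invariance needs a monotone quantity (as in the proof that $M$ is a vertical graph, via the number $\alpha$) or a maximum-principle comparison of $V_0$ with its own translates.

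By contrast, the paper's proof is engineered to never require regularity of any limit inside $\mathcal{C}$: it compares $M$ with a translate $M+s\nu+t_0w_1$ of \emph{itself}, chooses $t_0$ so that the eventual near-contact occurs in the slab $\mathcal{Z}_{1,\delta}^{+}$ where both objects are graphs and hence stable, invokes Schoen--Simon only there (this is exactly where $n<7$ enters), and then applies Theorem \ref{Strong Maximum Principle}. To salvage your scheme you would have to either establish stability, or some substitute regularity statement, for the limits inside the cylinder, or reroute the contradiction so that it only uses the behaviour of the limits outside $\mathcal{C}$; as written, the step ``$M_\infty^{(\infty)}=\Pi$'' is not justified.
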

\begin{proof} We claim that $\mathcal{H}_{1}$ and $\mathcal{H}_{2}$ are parallel. Assume to the contrary that is true. Then we could take a hyperplane parallel to $\ee_{n+1}$, $\Gamma$, such that it does not intersect $M$ and such that the normal vector $v$ to $\Gamma$ is not perpendicular to $w_{1}$ and $w_{2}$. Translate $\Gamma$ by $t_0 \in \R$ in the direction of $v$ until we get a hyperplane $\Gamma_{t_0}:=\Gamma+t_{0}v$ such that either $\Gamma_{t_0}$ and $M$ have a first point of contact or $\dist \left(\Gamma_{t_0},M\right)=0$ and $\Gamma_{t_0}\cap M=\varnothing.$ The first case is not possible by Theorem \ref{Strong Maximum Principle}. Regarding the second case, we can reason as in Lemma \ref{Characterization of the Hyperplane} to see that this case is also impossible.

Notice that we can not have either $\mathcal{H}_{1}\subset\mathcal{H}_{2}$ or $\mathcal{H}_{2}\subset\mathcal{H}_{1}$, because in these cases we could take a hyperplane parallel to $\ee_{n+1}$, $\Upsilon$, whose normal is exactly $w_{1}$ and do not intersect $M.$ Now we could move $\Upsilon$ into direction of $w_{1}$ until there exists $t_{0}>0$ such that either $\Upsilon+t_{0}w_{1}$ and $M$ have a first point of contact or $\left\{\Upsilon+t_{0}w_{1}\right\}\cap M=\varnothing$ and $\dist\left(\Upsilon+t_{0}w_{1}, M\right)=0$. Reasoning as in the above paragraph, we can conclude that booth situations are impossible. 

\begin{Remark}
We would like to point out that until this point the argument is valid for any dimension. In the remaining part of the proof we shall need that $n<7$.
\end{Remark}

Denote by $\Pi_{1}$ and $\Pi_{2}$ the hyperplanes that contain the half-hyperplane $\mathcal{H}_{1}$ and $\mathcal{H}_{2},$ respectively. By the previous claims, $\Pi_{1}$ and $\Pi_{2}$ are parallel. We claim that  $\Pi_{1}$ and $\Pi_{2}$ coincide. Notice that this proves our theorem, because if we reason as at the end of Lemma \ref{Characterization of the Hyperplane} we can deduce that $M$ coincides $\Pi_{1}(=\Pi_{2}.)$ 

Suppose to the contrary that $\Pi_{1}\neq\Pi_{2}$. Let $\nu$ be the normal vector to $\Pi_{1}$ and take $s_{0}$ sufficiently large so that $M+s_{0}\nu$ does not intersect the slab limited by $\Pi_{1}$ and $\Pi_{2}.$ Now consider a sufficiently large $t_{0}>0$ so that $(M\cap \mathcal{C})+s_{0}\nu+t_{0}w_{1}$   lies in $\mathcal{Z}_{1,\delta}^{+}$ (see Lemma \ref{Compactness Lemma}) and notice that the wing of $M$ which corresponds to $\mathcal{H}_{1}(\delta)$ is a graph over this set. 
Define the set \[\displaystyle{\mathcal{A}:=\{s\in[0,\infty); \{M+s\nu+t_{0}w_{1}\}\cap M=\varnothing\}}.\] Let $s_{1}:=\inf\mathcal{A}>0.$ We have two possibilities: either $s_{1}\notin\mathcal{A}$ or $s_{1}\in\mathcal{A}$. The first case implies that $M+s_{1}\nu+t_{0}w_{1}$ and $M$ have a point of contact, which is impossible by Theorem \ref{Strong Maximum Principle}. In the second case, we must have \[\displaystyle{\dist\left(M+s_{1}\nu+t_{0}w_{1},M\right)}=0\] and $\textstyle{\{M+s_{1}\nu+t_{0}w_{1}\}\cap M=\varnothing}.$ This means that there exist sequences $\{p_{i}\}$ in $M\cap\mathcal{Z}_{1,\delta}^{+}$ and $\{q_{i}\}$ in $\{M+s_{1}\nu+t_{0}w_{1}\}\cap\mathcal{Z}_{1,\delta}^{+}$ such that $\dist\{p_{i},q_{i}\}=0,$ note that we can suppose $\{\langle q_{i},\ee_{1}\rangle\}, \{\langle p_{i},\ee_{1}\rangle\}\to a$ and $\{\langle q_{i},\ee_{n}\rangle\}, \{\langle p_{i},\ee_{n}\rangle\}\to b$. Consider the sequences \[\displaystyle{\{M_{i}:=M-(0,p_{2},\ldots,p_{n-1},0,p_{n+1})\}}\] and \[\displaystyle{\{\widehat{M}_{i}:=M+s_{1}\nu+t_{0}w_{1}-(0,q_{2},\ldots,q_{n-1},0,q_{n+1})\}}.\] By Remark \ref{remarkcompctness}, up to a subsequence, $M_{i}\rightharpoonup M_{\infty}$ and $\widehat{M}_{i}\rightharpoonup\widehat{M}_{\infty},$ where $M_{\infty}$ and $\widehat{M}_{\infty}$ are connected stationary integral varifold, and $(a,0,\ldots,0,b,0)\in\operatorname{spt} M_{\infty}\cap\operatorname{spt} \widehat{M}_{\infty}$. Furthermore, by Theorem \ref{Strong Compactness Theorem} and our hypothesis, the point $(a,0,\ldots,0,b,0)$ lies in the region where $M_{\infty}$ is smooth. Hence Theorem \ref{Strong Maximum Principle} implies that $M_{\infty}=\widehat{M}_{\infty},$ which is impossible. Therefore $\Pi_{1}=\Pi_{2}=M.$
\end{proof}

\bibliographystyle{amsplain, amsalpha}

\begin{thebibliography}{60}
\bibitem{AW}Altschuler, S. J.; Wu, L. F.. Translating surfaces of the non-parametric mean curvature flow with prescribed contact angle. Calc. Var. Partial Differential Equations 2. 1 (1994): 101-111

\bibitem{ALLARD}Allard, William. On the first variation of a varifold. Ann. of Math. (2) (1972): 417-491.

\bibitem{Allard}Allard, William. On the first variation of a varifold: boundary behavior. Ann. of Math.(2) (1975): 418-446.

\bibitem{An1} Angenent, Sigurd. On the formation of singularities in the curve shortening flow. J. Differential Geom. vol. 33 (1991): 601--633.

\bibitem{Bourni-Langford}Bourni, T.; Langford, M. Type-II singularities of two-convex immersed mean curvature flow. Geom. Flows. vol. 2 (2016): 1-17.

\bibitem{bourni-2} Bourni, T., Langford, M. and Tinaglia, G., On the existence of translating solutions of mean curvature flow in slab regions. Preprint arXiv:1805.05173.

\bibitem{CSS}Clutterbuck, J.; Schn{\"u}rer, O. and Schulze, F. Stability of translating solutions to mean curvature flow. Calc. Var. Partial Differential Equations 2 (2007):281-293

\bibitem{DDPN}D{\'a}vila, J.; Del Pino, M.; Nguyen, X. H. Finite topology self-translating surfaces for the mean curvature flow in $\R^3$. Adv. Math. 320 (2017): 674-729.

\bibitem{HASLHOFER}Haslhofer, Robert. Uniqueness of the bowl soliton. Geom. Topol. 19. 4 (2015): 2393-2406.

\bibitem{hoffman} Hoffman, D.; Ilmanen, T.; Mart{\'\i}n, F.; White, B.  Translators for Mean Curvature Flow. Preprint arXiv: 1805.10860v1. 

\bibitem{ILMANEN}Ilmanen, T.: Elliptic regularization and partial regularity for motion by mean curvature. Mem. Amer. Math. Soc. 520 (1994).

\bibitem{MSHS15}Mart{\'\i}n, Francisco; Savas-Halilaj, Andreas and Smoczyk, Knut. On the topology of translating solitons of the mean curvature flow, Calc. Var. Partial Differential Equations. 54 (2015):2853-2882.

\bibitem{MPGSHS15} Mart{\'\i}n, F.; P{\'e}rez-Garc{\'\i}a, J.; Savas-Halilaj, A. and Smoczyk, K. A characterization of the grim reaper cylinder. J. Reine Angew. Math. to appear.

\bibitem{Nguyen09}Nguyen, Xuan Hien. Translating tridents. Comm. Partial Differential Equations 34.3 (2009): 257-280.

\bibitem{Nguyen13}Nguyen, Xuan Hien. Complete embedded self-translating surfaces under mean curvature flow. J. Geom. Anal. 23.3 (2013): 1379-1426.

\bibitem{Nguyen15}Nguyen, Xuan Hien. Doubly periodic self-translating surfaces for the mean curvature flow. Geom. Dedicata 174.1 (2015): 177-185.

\bibitem{GARCIA16}P{\'e}rez-Garc{\'\i}a, Jes{\'u}s. Translating Solitons of The Mean Curvature Flow. Doctoral Thesis. Doctoral Programme in Mathematics, Universidad de Granada (2016). \newline
\texttt{http://gigda.ugr.es/digap/data/uploads/Tesis/Thesis\_JesusPerezGarcia.pdf}

\bibitem{Smith}Smith, G. On complete embedded translating solitons of the mean curvature flow that are of finite genus. arXiv:1501.04149v2 (2017).

\bibitem{SIMON}Simon, Leon. Lectures on geometric measure theory. The Australian National University, Mathematical Sciences Institute, Centre for Mathematics and its Applications (1983).

\bibitem{SHAHRIYARI15}Shariyari, Leili. Translating graphs by mean curvature flow. Geom. Dedicata. 175(2015):57-64.

\bibitem{SW}Solomon, B.; White, B. A strong maximum principle for varifolds that are stationary with respect to even parametric elliptic functionals. Indiana Univ. Math. J. 38(1989): 683-691.

\bibitem{Schoen-Simon}Schoen, R.; Simon, L. Regularity of stable minimal hypersurfaces. Comm. Pure Appl. Math. 34.6 (1981): 741-797.

\bibitem{spruck-xiao} Spruck, J.; Xiao, L. Complete translating solitons to the mean curvature flow in $\R^3$ with nonnegative mean curvature. Preprint arXiv: 1703.01003.

\bibitem{Tasayco-zhou}Tasayco, D.; Zhou, D. Uniqueness of grim hyperplanes for mean curvature flows. Arch. Math.(Basel) 109.2 (2017): 191-200.

\bibitem{XIN16}Xin, Y. -L. Translating solitons of the mean curvature flow. Calc. Var. Partial Differential Equations 54.2 (2015): 1995-2016.

\bibitem{Wang}Wang, Xu-Jia. Convex solutions to the mean curvature flow. Ann. of Math.(2) (2011): 1185-1239.

\bibitem{Wickramasekera}Wickramasekera, Neshan. A general regularity theory for stable codimension 1 integral varifolds. Ann. of Math.(2) 179.3 (2014): 843-1007.


\bibitem{WHI}White, Brian. Which ambient spaces admit isoperimetric inequalities for submanifolds? J. Differential Geom. 83(2009):213-228.

\bibitem{WHI09}White, Brian. The maximum principle for minimal varieties of arbitrary codimension. Comm. Anal. Geom. 18 (2010), no. 3, 421-432. 

\bibitem{WHI15}White, Brian. On the compactness theorem for embedded minimal surfaces
in 3-manifolds with locally bounded area and genus, arXiv:1503.02190v1
(2015).

\bibitem{WHINotes}White, Brian. Lecture notes on Mean Curvature Flow. \newline
\texttt{https://web.math.princeton.edu/$\thicksim$ochodosh/MCFnotes.pdf}.

\bibitem{WHI16}White, Brian. Controlling area blow-up in minimal or bounded mean curvature varieties. J. Differential Geom. 102.3 (2016): 501-535.
\end{thebibliography}

\end{document}